\newcommand{\A}{\mathbf{A}}
\renewcommand{\P}{\mathbf{P}}
\newcommand{\Z}{\mathbb{Z}}
\newcommand{\sA}{\mathcal{A}}
\newcommand{\sB}{\mathcal{B}}
\newcommand{\sC}{\mathcal{C}}
\newcommand{\sO}{\mathcal{O}}
\newcommand{\sX}{\mathcal{X}}
\newcommand{\sY}{\mathcal{Y}}
\newcommand{\Mod}{\operatorname{Mod}\hbox{--}}
\newcommand{\Cor}{\operatorname{\mathbf{Cor}}}
\newcommand{\ul}[1]{{\underline{#1}}}
\newcommand{\PST}{{\operatorname{\mathbf{PST}}}}
\newcommand{\DM}{\operatorname{\mathbf{DM}}}
\newcommand{\MDM}{\operatorname{\mathbf{MDM}}}
\newcommand{\MHSM}{\operatorname{\mathbf{MHSM}}}
\newcommand{\Spec}{\operatorname{Spec}}
\newcommand{\Sm}{\operatorname{\mathbf{Sm}}}
\newcommand{\Sch}{\operatorname{\mathbf{Sch}}}
\newcommand{\by}{\xrightarrow}
\newcommand{\iso}{\by{\sim}}
\newcommand{\ls}{{\operatorname{ls}}}
\newcommand{\pro}[1]{\text{\rm pro}_{#1}\text{\rm--}}
\newcommand{\proobj}[1]{\underset{#1}{\operatorname{``\varprojlim''}}}
\newcommand{\tr}{{\operatorname{tr}}}
\newcommand{\man}{{\operatorname{man}}}
\newcommand{\fin}{{\operatorname{fin}}}
\renewcommand{\o}{{\circ}}
\newcommand{\op}{{\operatorname{op}}}
\newcommand{\red}{{\operatorname{red}}}
\newcommand{\Zar}{{\operatorname{Zar}}}
\newcommand{\Nis}{{\operatorname{Nis}}}
\newcommand{\et}{{\operatorname{\acute{e}t}}}
\newcommand{\inj}{\hookrightarrow}
\newcommand{\id}{{\operatorname{Id}}}
\newcommand{\CH}{{\operatorname{CH}}}
\renewcommand{\lim}{\operatornamewithlimits{\varprojlim}}
\newcommand{\ol}{\overline}
\renewcommand{\phi}{\varphi}
\renewcommand{\epsilon}{\varepsilon}
\newcommand{\MCor}{\operatorname{\mathbf{MCor}}}
\newcommand{\Bl}{{\mathbf{Bl}}}
\newcommand{\M}{\mathbf{M}}
\newcommand{\ulM}{\underline{\M}}
\newcommand{\ulMPST}{\operatorname{\mathbf{\underline{M}PST}}}
\newcommand{\ulMCor}{\operatorname{\mathbf{\underline{M}Cor}}}
\newcommand{\ulNCor}{\operatorname{\mathbf{\underline{N}Cor}}}
\newcommand{\ulomega}{\underline{\omega}}
\newcommand{\olS}{\overline{S}}
\newcommand{\olT}{\overline{T}}
\newcommand{\olMCrv}{\operatorname{\mathbf{\overline{M}Crv}}}
\newcommand{\Comp}{\operatorname{\mathbf{Comp}}}
\newcommand{\Sep}{\operatorname{\mathbf{Sep}}}
\newcommand{\ulTCor}{\operatorname{\mathbf{\underline{T}Cor}}}
\newcommand{\TCor}{\operatorname{\mathbf{TCor}}}
\newcommand{\ulWCor}{\operatorname{\mathbf{\underline{W}Cor}}}
\newcommand{\WCor}{\operatorname{\mathbf{WCor}}}
\newcommand{\ulTphiCor}{\operatorname{\mathbf{\underline{T}}^\emptyset\mathbf{Cor}}}
\newcommand{\TphiCor}{\operatorname{\mathbf{T}^\emptyset\mathbf{Cor}}}
\newcommand{\TphiSm}{\operatorname{\mathbf{T}^\emptyset\mathbf{Sm}}}
\newcommand{\ulTbCor}{\operatorname{\mathbf{\underline{T}}^\mathrm{b}\mathbf{Cor}}}
\newcommand{\ulTredCor}{\operatorname{\mathbf{\underline{T}}^\mathrm{r}\mathbf{Cor}}}
\newcommand{\ulTvgCor}{\operatorname{\mathbf{\underline{T}}^\mathrm{vg}\mathbf{Cor}}}
\newcommand{\ulTvvgCor}{\operatorname{\mathbf{\underline{T}}^\mathrm{vvg}\mathbf{Cor}}}
\newcommand{\ulTsatCor}{\operatorname{\mathbf{\underline{T}}^\mathrm{sat}\mathbf{Cor}}}
\newcommand{\ulTPS}{{\operatorname{\mathbf{\ul{T}PS}}}}
\newcommand{\ulTPST}{{\operatorname{\mathbf{\ul{T}PST}}}}
\newcommand{\ulTredPST}{{\operatorname{\mathbf{\ul{T}}^{\mathrm{r}}\mathbf{PST}}}}
\newcommand{\TPST}{{\operatorname{\mathbf{TPST}}}}
\newcommand{\TSm}{{\operatorname{\mathbf{TSm}}}}
\newcommand{\olMSm}{{\operatorname{\mathbf{\ol{M}Sm}_{\log}}}}
\newcommand{\ulTSm}{{\operatorname{\mathbf{\ul{T}Sm}}}}
\newcommand{\IY}{\operatorname{\mathbf{IY}}}
\newcommand{\defset}[3]{\left\{{#1}\left\rvert \parbox{#2}{#3} \right. \right\}}
\def\bP{\mathbb{P}}
\newcounter{spec}
\newenvironment{thlist}{\begin{list}{\rm{(\roman{spec})}}%
{\usecounter{spec}\labelwidth=20pt\itemindent=0pt\labelsep=10pt}}%
{\end{list}}%
\newtheorem{lemma}{Lemma}[section]
\newtheorem{thm}[lemma]{Theorem}
\newtheorem{prop}[lemma]{Proposition}
\newtheorem{cor}[lemma]{Corollary}
\newtheorem{corollary}[lemma]{Corollary}
\theoremstyle{definition}
\newtheorem{defn}[lemma]{Definition}
\newtheorem{definition}[lemma]{Definition}
\theoremstyle{remark}
\newtheorem{rk}[lemma]{Remark}
\newtheorem{remark}[lemma]{Remark}
\newtheorem{remarks}[lemma]{Remarks}
\newtheorem{ex}[lemma]{Example}
\newtheorem{claim}[lemma]{Claim}
\numberwithin{equation}{section}
\begin{document}

\title{Modulus triples}
\author{Bruno Kahn}
\address{CNRS, Sorbonne Université and Université Paris Cité, IMJ-PRG\\ Case 247\\4 place
Jussieu\\75252 Paris Cedex 05\\France}
\email{bruno.kahn@imj-prg.fr}
\author{Hiroyasu Miyazaki}
\address{NTT Institute for Fundamental Mathematics, NTT Communication Science Laboratories, NTT Corporation, 3-1 Morinosatowakamiya, Atsugi, Kanagawa 243-0124 Japan}
\email{hiroyasu.miyazaki.ah@hco.ntt.co.jp}
\thanks{
The second author is supported by JSPS KAKENHI Grant (21K13783, 19K23413).
}
\date{\today}
\begin{abstract}
We develop a theory of modulus triples, for future motivic applications.
\end{abstract}
\subjclass[2020]{14C25, 14C20, 14C15}
\keywords{Cartier divisor, modulus, motive}

%%%%%%%%%%%%%%%%% Message Begins Here %%%%%%%%%%%%%%%%%%%
%Dear Bruno,  \\

%Here is a draft of our paper on motives of triples. 

%I was wondering when I should pass this file to you. Ideally, I wanted to write down every discussion which can be formally adapted from \cite{motmod}, but I found that it would take so much time. 

%So, I stopped at \S \ref{section-shv-ultcor} for the moment. This is not because of technical difficulties, but because I am a bit tired of writing. Indeed, Theorem \ref{sheafification-ultcor} should be an immediate consequence of \cite[Lemma A.9.1 (Pullback Lemma)]{motmod} and of the former results in this paper. 

%Today, I realized that I forgot to discuss the ``strong addivity'' of functors. I hope that they work by the same reason as in \cite{motmod}. 

%I tried to clean up the paper as much as possible, but I am completely sure that there are many typos, and possibly logical mistakes. I applogize for this in advance. \\

%Best regards,

%Hiroyasu

%%%%%%%%%%%%%%%%% Message Ends Here %%%%%%%%%%%%%%%%%%%
%\newpage

\maketitle

\hfill Preliminary version

\tableofcontents

\section*{Introduction}

One of the most important results in the theory of motives, as developed by Voevodsky in \cite{voetri}, is the comparison isomorphism between motivic cohomology groups and higher Chow groups. Recently, these two concepts have been generalized to the ``non $\mathbb{A}^1$-homotopy invariant'' world, in independent contexts. 
On the one hand, by F. Binda and S. Saito \cite{Binda-Saito}, higher Chow groups are generalized to ``higher Chow groups with modulus,'' denoted by $\CH^r (X,D,\ast)$; they can be considered as a cycle-theoretic counterpart of the relative $K$-groups $K_\ast (X,D)$. Here, $D$ is an effective Cartier divisor on a scheme $X$. We call $(X,D)$ a ``modulus pair.''
On the other hand, the theory of motives has been extended to a theory of ``motives with modulus'' \cite{motmod}. The motivation was to enlarge Voevodsky's category of motives $\DM$ into another triangulated category $\MDM$ which relates to a large class of non $\mathbb{A}^1$-invariant sheaves introduced in \cite{rec}, e.g. all commutative algebraic groups, de Rham-Witt complex, etc., see \cite{motmod2}.

It is natural to expect that Voevodsky's comparison theorem can be generalized to a comparison between the Hom groups of $\MDM$ and the higher Chow groups with modulus. However, this turns out to be hopeless. Indeed, motives with modulus and higher Chow groups with modulus have opposite functorialities. Motives with modulus are functorial with respect to a class of morphisms called ``admissible correspondences.'' A typical example of an admissible correspondence $(X,D) \to (X',D')$ is given by a morphism $f : X \to X'$ satisfying $D \geq f^\ast D'$. On the other hand, the (hypercohomology version of) the higher Chow groups with modulus should be functorial with respect to the \emph{coadmissible correspondences} $(X,D) \to (X',D')$ of Subsection \ref{s3.5}, a typical example of which is given by a morphism $f : X \to X'$ satisfying $D \leq f^\ast D'$. The functoriality expected above is true at least in this example, by the work of Wataru Kai \cite{kai}.  

Here is another example of this phenomenon. A key step in the construction of Voevodsky's category $\DM$ was to invert the first projections $X \times \mathbb{A}^1 \to X$ for all smooth schemes $X$ over the base field. In the construction of $\MDM$, this process was replaced with inverting all $(X,D) \otimes \ol{\square} \to (X,D)$, where $\ol{\square} = (\bP^1,\infty)$ and $\otimes$ is the tensor product of modulus pairs. %The modulus pair $\ol{\square}$ is called the ``cube,'' the use of which is the most fundamental innovation by Kahn-Saito-Yamazaki. 
As a consequence, the motives $M(X,D)$ associated to modulus pairs satisfy $M((X,D) \otimes \ol{\square}) \cong M(X,D)$. % which is called ``the cube invariance.'' 
On the other hand, the higher Chow groups with modulus satisfy the  dual property $\CH^r((X,D),\ast) \cong \CH^r((X,D) \otimes \ol{\square}^\vee,\ast)$, where $\ol{\square}^\vee = (\bP^1 , -\infty)$. Note that the multiplicity of $\ol{\square}^\vee$ is \emph{negative}. In \cite{cubeinv}, the second author proved this isomorphism after generalizing the definition of the higher Chow groups with modulus to pairs $(X,D)$ of a scheme $X$ and a Cartier divisor $D$, which is not necessarily effective. This result suggests that motives with modulus and higher Chow groups with modulus may be connected to each other by ``changing the signs of divisors.''

The aim of this paper is to generalize the theory of motives with modulus so that the divisors $D$ admit a change of signs. One way for such a generalization might be to expand the theory of \cite{motmod} by replacing effective Cartier divisors with Cartier divisors: this can actually be done, see Lemma \ref{lA.1}. However, we make use of a more sophisticated idea, as in F. Binda's thesis \cite{binda}: consider two effective Cartier divisors. Thus the central objects that we study are triples
\[
T=(\ol{T},T^+,T^-),
\]
where $\ol{T}$ is a scheme and $T^+,T^-$ are \emph{effective} Cartier divisors on $\ol{T}$. We call them \emph{modulus triples}. Morally, considering such a triple corresponds to considering the pair $(\ol{T},T^+ - T^-)$ as suggested by the modulus condition in Definition \ref{d2.1}, and we do relate the two ideas in Proposition \ref{pA.1}. But the data of triples provide us with a much sharper and more flexible treatment of the theory. In the context of triples, ``change of signs'' is encoded as ``switching divisors.''

%{\color{red} \Large ...To be continued according to the development of the paper...}

\bigskip

Here is a summary of the contents of the paper:

Section \ref{s2} contains the definition of the category of modulus triples under various incarnations which follow those of \cite{motmodI}. The main one is denoted by $\ulTCor$; like the category of modulus pairs $\ulMCor$, it carries a tensor structure and admits a monoidal forgetful functor to Voevodsky's category $\Cor$ of finite correspondences. It also contains $\ulMCor$ as the full subcategory given by the condition $T^-=0$. The modulus condition here is more sophisticated than in loc. cit., see Definition \ref{def-modcond} and Lemma \ref{general-mod}.

Section \ref{s3} is the core of this work: it studies a large number of subcategories of $\ulTCor$, depending on the relationship between the two divisors $T^+,T^-$ of a modulus triple and also on certain properties of morphisms. 
Its main point is to study to what extent the inclusion $\ulMCor\subset \ulTCor$ has a left or a right adjoint: such properties can be very useful for a future theory of (pre)sheaves. Clearly, $\ulMCor$ is contained in the full subcategory $\ulTphiCor$ of triples such that $T^+\cap T^-=\emptyset$, and both adjoints exist for this refined inclusion  by Theorem \ref{adjoint-phi}; moreover, the inclusion $\ulTphiCor\subset \ulTCor$ has a pro-left adjoint under resolution of singularities by Theorem \ref{pro-separation}, hence so does also the inclusion $\ulMCor\subset \ulTCor$ (Theorem \ref{adjoint-psi}). As for a right adjoint, it exists if we restrict to the non-full subcategory $\ulTvvgCor$ with the same objects as $\ulTCor$, but only morphisms in ``excellent position'' in the sense of Definition \ref{d2.3}: see Proposition \ref{p3.5}.

In Section \ref{s4}, we compare $\ulTCor$ with a quiver defined by F. Ivorra and T. Yamazaki in \cite{iy2} and to the `modulus data' of
F. Binda \cite{binda}: these comparisons work very well, which is for us an indication that the theory developed here is ``right''.

In the appendix, we introduce a larger version $\ulNCor$ of $\ulMCor$ involving not necessarily effective Cartier divisors, and show that the inclusion $\ulMCor\subset \ulTCor$ extends to a full embedding $\ulNCor\subset\ulTCor$ (Proposition \ref{pA.1}). This is a second indication for us that the present theory of modulus triples is the ``right'' one.

%\redb{Can the story of adjoints be expanded from $\ulMCor$ to $\ulNCor$?}

This 43 page paper only concerns categories of modulus triples. A theory of presheaves and sheaves will be developed in future work.

\subsection*{Acknowledgements} We thank Federico Binda, Florian Ivorra and Takao Yamazaki for a very enlightening discussion on Section \ref{s4}.

\subsection*{Conventions}\label{conv}
\begin{enumerate}
\item Throughout this paper, we fix a base field $k$. We denote by $\Sch$ the category of separated $k$-schemes of finite type, and by $\Sm$ its full category of smooth $k$-schemes.
%Denote by $\MCor$ and $\ulMCor$ the categories of modulus pairs in the sense of \cite{motmod}.
%Denote by $\MPST$, $\MNST$, $\ulMPST$ and $\ulMNST$ the corresponding categories of modulus (pre)sheaves with transfers.

\item Let $f : Y \to X$ be a morphism in $\Sch$, and let $A \subset X$ be a closed subscheme. Then, we abbreviate the fiber product $A \times_X Y$ by $f^{-1}(A)$.

\item For any scheme $X$, the \emph{normalization} of $X$ means the normalization of $X_\red$, and it is denoted by $X^N$.

\item Let $f : X \to S$ be a morphism of schemes, and let $\pi : \tilde{S} \to S$ be a blow up along some closed subscheme $Z \hookrightarrow S$. Consider the blow up $\tilde{X}:=\Bl_{f^{-1}(Z)} X \to X$ of $X$ along the closed subscheme $f^{-1}(Z) = Z \times_{S} X$. Then, the universal property of blowing up induces a unique morphism $\tilde{f} : \tilde{X} \to \tilde{S}$ which makes the resulting square diagram commute (see \cite[Corollary 7.15]{Hartshorne}). We call $\tilde{f}$ the \emph{strict transform of $f$ along $\pi$} (in \cite{Hartshorne}, this name is used only when $f$ is a closed immersion). Note that if $f$ is flat, then we have $\tilde{X} \simeq X \times_{S} \tilde{S}$.
%c.f. \url{https://stacks.math.columbia.edu/tag/080C}
\item Let $f:Y \to X$ be a morphism in $\Sch$, and let $D$ be a Cartier divisor on $X$. Suppose that the pullback $f^\ast D$  of a Cartier divisor is defined (in the sense of \cite[(21.4.2)]{EGA4}).
Then, we often write $D|_Y := f^\ast D$ for simplicity of notation. 
% In fact, this is not so clear: what is the definition of ``well-defined''? Is it more general than (4)? There is a precise definition in \cite[(21.4.2)]{EGA4}. We should make the link with this definition.
\end{enumerate}

\subsection*{A few facts on Cartier divisors} Let $f : Y \to X$ be a morphism in $\Sch$, and let $D \subset X$ be a Cartier divisor. 

\begin{enumerate}
\item If $D$ is effective and its pullback is defined, it corresponds to the closed subscheme $D \times_X Y$ of $Y$ \cite[(21.4.7)]{EGA4}. 
%{\color{red} (It can be checked directly. But do you know a reference (for example in EGA)?)}
In particular, using the convention above, we then have $f^\ast (D) = f^{-1}(D)$.

\item Assume one of the following conditions:

\begin{itemize}
\item[(a)] $Y$ is flat, or
\item[(b)] $Y$ is reduced, and for any irreducible component $Y_i$ of $Y$, the image $f(Y_i )$ of $Y_i$ is not contained in the support $|D|$ of $D$.
\end{itemize}

Then, the pullback $f^\ast D$ is defined: for (a) see \cite[(21.4.5)]{EGA4} and for (b) see \cite[2.2.3]{sasa} (in summary: this is checked locally), or the Stacks Project, Lemma 30.13.13. (3)\footnote{\url{https://stacks.math.columbia.edu/tag/01WQ}} when $D$ is effective. 
%{\color{red} Look also at \cite[(21.4.8)]{EGA4} (amusing!)} {\color{blue} Yes! It might be useful to shorten arguments to check modulus conditions in good situations. But we have to be very careful in practical applications of this, because we do not know whether a faithfully flat moprhism induces a surjection on normalizations (I suppose it does not).}
%{\color{red} I think it does: pull back to the normalization of the base, it remains ff. Then normalization is surjective, at least for objects of $\Sch$...}
\end{enumerate}

\section{The category of modulus triples}\label{s2}

\subsection{Definition of modulus triples}

\begin{definition}
A \emph{modulus triple} over $k$ is a triple $T = (\ol{T},T^+,T^-)$ which consists of 
\begin{itemize}
\item a scheme $\ol{T} \in \Sch$, and
\item two effective Cartier divisors $T^+$ and $T^-$ on $\ol{T}$.
\end{itemize}
We say that $(T^+,T^-)$ is a \emph{modulus structure} on $\ol{T}$.
A modulus triple $T$ is \emph{proper} if $\ol{T}$ is proper over $k$.
We set 
\begin{align*}
T^\o := \ol{T} \setminus T^+,
\end{align*}
and call it the \emph{interior} of $T$. We call $\ol{T}$ the \emph{total space} of $T$.
\end{definition}

\begin{definition}
A modulus triple $T$ is \emph{interiorly smooth} 
%\footnote{%{\color{red} can we drop interiorly?} {\color{blue} I agree to change the terminology, but I think we had better not use ``smooth'' because there are many other choices of the ``smoothness'' of $T$, i.e., the smoothness of $\ol{T}, \ol{T} \setminus |T^-|, \ol{T} \setminus (|T^+| \cup |T^-|)$. So, I think we had better (at least temporarily) a specific terminology for the smoothness of $T^\o$.} {\color{red} I have an idea: use smooth in one case, Smooth in another. It is not a very good idea. Or we could use ``smooth'' in this case (the most common one), ``totally smooth'' when $\ol{T}$ is smooth... I don't know! Or use ``smooth'' now and invent another terminology only when the need arises.} {\color{blue} Anyway, I believe that this is not urgent. Let's think about it later. (A merit of this attitude is that if we want to change the terminology later, then it suffices to search ``interiorly.'' If we use just ``smooth,'' then it would be more difficult to find the places where it is used because the terminology ``smooth'' will be used in many other different contexts.)}}
if $T^\o\in \Sm$.
A modulus triple $T$ is \emph{disjoint} if $T^+ \cap T^- = \emptyset$.
%A modulus triple $T$ is \emph{nicely crossing} if $T^+$ and $T^-$ meet properly on $\ol{T}$, i.e., if any irreducible component of $T^+$ does not contain any irreducible component of $T^-$, and vice versa.
%{\color{red} A modulus triple $T$ is \emph{nicely crossing} if for any common component $V$ of the (Weil) divisors $T^+ |_{\ol{T}}$ and $T^- |_{\ol{T}}$, the multiplicities of $V$ in $T^+ |_{\ol{T}}$ and $T^- |_{\ol{T}}$ coincide. } 
\end{definition}

\begin{remark}
For any modulus triple $T$ such that $T^\o$ is reduced (e.g., interiorly smooth modulus triples), the total space $\ol{T}$ is automatically reduced. This can be checked by the same argument as in \cite[Rem. 1.1.2 (3)]{motmodI}, but we provide a proof here for the sake of completeness. 
Since the problem is local, we may assume that $\ol{T}=\Spec (A)$ is affine and the effective Cartier divisor $T^+$ is principal. Let $a \in A$ be a non-zero divisor which generates the ideal of definition of $T^+$. Then, the natural morphism $A \to A[1/a]$ is injective. Since $T^\o = \ol{T} \setminus T^+ = \Spec(A[1/a])$ is reduced, the ring $A[1/a]$ is reduced. Therefore, $A$ is also reduced. This proves the assertion.
\end{remark}

\begin{definition}
Let $T = (\ol{T},T^+,T^-)$ be a modulus triple. We define a new modulus triple, denoted $T^\vee$, by
\[
T^\vee := (\ol{T},T^-,T^+).
\]
We have an evident equality $(T^\vee)^\vee = T$. Note that $(T^\vee)^\o\ne T^\o$ in general.
\end{definition}

\subsection{Modulus condition} Recall from \cite[Lemma A.1]{KM}:

\begin{lemma}\label{key-lem}
Let $X$ be a scheme.
Suppose given three effective Cartier divisors $D_1, D_2$ and $E$ on $X$ such that $E \leq D_i$ for each $i=1,2$.

Then, we have:
\[
\text{$E = D_1 \times_X D_2$ \ \ iff. \ \ $|D_1 - E| \cap |D_2 - E| = \emptyset $.}
\]
\end{lemma}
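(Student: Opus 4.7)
The plan is to reduce the statement to a local computation with local equations of the Cartier divisors. Both sides of the ``iff'' are local on $X$, so I may work Zariski-locally on an affine open $U = \Spec(A)$ chosen small enough that each of $D_1, D_2, E$ is cut out by a single local equation: say $f_1, f_2, e \in A$, respectively. Because $D_1, D_2, E$ are effective Cartier divisors, these local equations are all nonzerodivisors in $A$; this is the point where effectivity genuinely enters.

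The hypothesis $E \leq D_i$ means locally that $e \mid f_i$, so I can write $f_i = e \cdot g_i$ for some $g_i \in A$. By the very definition of the difference of Cartier divisors, $g_i$ is then a local equation for $D_i - E$, so that $|D_i - E| \cap U = V(g_i)$ as closed subsets of $U$. I now translate both sides of the desired equivalence into ideal-theoretic conditions in $A$. The scheme-theoretic fiber product $D_1 \times_X D_2$ is cut out locally by the ideal $(f_1, f_2) = (e g_1, e g_2)$, while $E$ is cut out by $(e)$; since the inclusion $(e g_1, e g_2) \subseteq (e)$ is tautological, the identity $E = D_1 \times_X D_2$ is equivalent to $e \in (e g_1, e g_2)$, i.e.\ to the existence of $a, b \in A$ such that $e(1 - a g_1 - b g_2) = 0$.

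Because $e$ is a nonzerodivisor, this relation is in turn equivalent to $1 \in (g_1, g_2)$, i.e.\ to the fact that $g_1$ and $g_2$ generate the unit ideal in $A$. Geometrically, $1 \in (g_1, g_2)$ is exactly the statement $V(g_1) \cap V(g_2) = \emptyset$ inside $U$, which by the identification above reads $|D_1 - E| \cap |D_2 - E| \cap U = \emptyset$. Gluing this over an affine cover of $X$ yields the global equivalence claimed. I do not expect any serious obstacle: the argument is essentially a one-line manipulation of ideals, and the only sensitive point is invoking ``$e$ a nonzerodivisor'' at the correct moment to strip the $e$ off the relation $e(1 - a g_1 - b g_2) = 0$; this use of effectivity is exactly what makes the equivalence clean.
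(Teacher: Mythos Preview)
Your proof is correct and is essentially the same argument as the paper's, carried out affine-locally rather than in the language of ideal sheaves: the paper's key identity $I_Z \cdot I_E = I_{D_1 \times_X D_2}$ is locally exactly your $(g_1,g_2)\cdot(e) = (eg_1,eg_2)$, and its appeal to the invertibility of $I_E$ is your cancellation of the nonzerodivisor $e$.
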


\begin{remark}
The ``inf'' of two effective Cartier divisors might be zero even if $|D_1| \cap |D_2| \neq \emptyset$: for example, consider the case $X = \mathbb{A}^2 = \Spec (k[x_1,x_2])$ and $D_i = \{x_i = 0\}$.
\end{remark}

\begin{proof}[Proof of Lemma \ref{key-lem}]
%The inclusions $E \subset D_1$ and $E \subset D_2$ imply that $D_1 - E$ and $D_2 - E$ are effective.
Regard the effective Cartier divisors $D_i - E$ as closed subschemes on $X$, and set
\[
Z := (D_1 - E) \times_X (D_2 - E).
\]
%Then, we have the natural closed immersions
%\[
%Z \to D_1 - E , \ \ Z \to D_2 - E .
%\]
For a closed subscheme $i : V \to X$, we set \[I_V := \mathrm{Ker} (\mathcal{O}_X \to i_\ast \mathcal{O}_V ) . \]
Then, we have
\[
I_{D_1 \times_X D_2} = I_{D_1} + I_{D_2}.
\]
Since $I_{Z} = I_{D_1 - E} + I_{D_2 - E} = I_{D_1} \cdot I_E^{-1} + I_{D_2} \cdot I_E^{-1}$, we have
\[
I_{Z} \cdot I_E = (I_{D_1} \cdot I_E^{-1} + I_{D_2} \cdot I_E^{-1}) \cdot I_E = I_{D_1} + I_{D_2} ,
\]
where $I_{E}^{-1}$ denotes the inverse of the invertible ideal sheaf $I_E$.
Combining the above equalities, we obtain
\begin{equation}\label{eq-interpret}
I_{Z} \cdot I_E = I_{D_1 \times_X D_2} .
\end{equation}
Therefore, we have
\begin{align*}
|D_1 - E| \cap |D_2 - E| = \emptyset  &\Leftrightarrow Z = \emptyset \Leftrightarrow I_Z = \mathcal{O}_X \Leftrightarrow^\dag I_{D_1 \times_X D_2}  = I_E \\
&\Leftrightarrow  D_1 \times_X D_2 = E,
\end{align*}
where $\Leftrightarrow^\dag$ follows from (\ref{eq-interpret}) and the fact that $I_E$ is invertible.
This finishes the proof of Lemma \ref{key-lem}.
\end{proof}

\begin{definition}\label{d2.2}
Let $T = (\ol{T},T^+,T^-)$ be a modulus triple. Let $\pi_T : \Bl_{F_T}(\ol{T}) \to \ol{T}$ be the blow up of $\ol{T}$ along the closed subscheme 
\[F_T := T^+ \times_{\ol{T}} T^-, \] 
which we call the \emph{fundamental locus} of $T$. We denote by $E_T$ the exceptional divisor of $\pi$, i.e., $E_T : =\pi_T^{-1} (F_T)$. We set
\[
\tilde{T}^+ := \pi_T^\ast T^+ - E_T, \ \ \tilde{T}^- := \pi_T^\ast T^- - E_T.
\]
Note that $\tilde{T}^+$ and $\tilde{T}^-$ are effective Cartier divisors on $\Bl_{F_T}(\ol{T})$. 
We define a modulus triple $\Bl(T)$ by
\[
\Bl(T) := (\Bl_{F_T}(\ol{T}),\tilde{T}^+,\tilde{T}^-),
\]
and call it the \emph{separation of $T$}.
\end{definition}

\begin{remark}
If $T^+$ and $T^-$ intersect properly, then $\tilde{T}^-$ equals the strict transform of $T^-$.
\end{remark}

\begin{lemma}\label{l2.1}
For any modulus triple $T$, we have $\tilde{T}^+ \cap \tilde{T}^- = \emptyset$.  In other words, $\Bl(T)$ is disjoint. If $T$ is disjoint, $\Bl(T)=T$.
\end{lemma}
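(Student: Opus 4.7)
The plan is to reduce the claim directly to Lemma \ref{key-lem} applied on $X = \Bl_{F_T}(\ol{T})$ with $D_1 = \pi_T^\ast T^+$, $D_2 = \pi_T^\ast T^-$, and $E = E_T$. With this choice, the effective Cartier divisors $D_i - E$ are precisely $\tilde{T}^+$ and $\tilde{T}^-$ by definition, so Lemma \ref{key-lem} will give $|\tilde{T}^+|\cap |\tilde{T}^-|=\emptyset$ as soon as its hypotheses are verified.

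First I would check that $E_T \leq \pi_T^\ast T^\pm$: since $F_T\subset T^+$ as closed subschemes, we have $I_{T^+}\subset I_{F_T}$, hence $I_{\pi_T^\ast T^+} = I_{T^+}\mathcal{O}_{\Bl}\subset I_{F_T}\mathcal{O}_{\Bl} = I_{E_T}$, which is the required inequality; the same argument handles $T^-$. In particular, $\tilde{T}^+$ and $\tilde{T}^-$ are well-defined effective Cartier divisors, as already noted in Definition \ref{d2.2}.

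The main verification is the equality $E_T = \pi_T^\ast T^+ \times_{\Bl_{F_T}(\ol{T})} \pi_T^\ast T^-$. Here I would use the scheme-theoretic description $F_T = T^+\times_{\ol{T}} T^-$, which translates to $I_{F_T} = I_{T^+} + I_{T^-}$. Pulling this identity back via $\pi_T$ (and using that $I_{E_T}$ is, by the universal property of blow-up, the invertible sheaf $I_{F_T}\cdot \mathcal{O}_{\Bl}$) yields
\[
I_{E_T} = I_{T^+}\mathcal{O}_{\Bl} + I_{T^-}\mathcal{O}_{\Bl} = I_{\pi_T^\ast T^+} + I_{\pi_T^\ast T^-},
\]
which is exactly the statement that $E_T$ equals the scheme-theoretic intersection of $\pi_T^\ast T^+$ and $\pi_T^\ast T^-$. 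Lemma \ref{key-lem} then applies and gives the disjointness. For the final sentence, if $T$ is already disjoint then $F_T = \emptyset$, so $\pi_T$ is the identity and $E_T = 0$, whence $\tilde{T}^\pm = T^\pm$ and $\Bl(T)=T$.

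The only substantive point is the pullback-of-ideals computation in the middle paragraph; everything else is formal. I would expect no real obstacle beyond making sure the identification $I_{E_T}=I_{F_T}\mathcal{O}_{\Bl}$ is stated cleanly, which is just the defining property of the blow-up.
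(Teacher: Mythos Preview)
Your proof is correct and follows exactly the approach the paper intends: the paper's proof simply reads ``The first assertion follows from Lemma~\ref{key-lem}. The second one is obvious.'' You have merely spelled out the verification of the hypotheses of Lemma~\ref{key-lem} (namely $E_T\le \pi_T^\ast T^\pm$ and $E_T=\pi_T^\ast T^+\times_{\Bl_{F_T}(\ol{T})}\pi_T^\ast T^-$) that the paper leaves implicit.
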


\begin{proof}
The first assertion follows from Lemma \ref{key-lem}.  The second one is obvious.
\end{proof}

\begin{definition}\label{def-modcond}
Let $T = (\ol{T},T^+,T^-)$ be a modulus triple.  
Let $f : X \to \ol{T}$ be a morphism in $\Sch$ which satisfies the following condition:
\begin{equation*}\label{eqF}
 f^{-1}(F_T) \text{ does not contain any irreducible component of } X.\tag{F}
\end{equation*}
 Let $\tilde{f} : \tilde{X} \to \Bl_{F_T}(\ol{T})$ be the strict transform of $f$ along $\pi_T : \Bl_{F_T}(\ol{T}) \to \ol{T}$.
We say that \emph{$f$ satisfies the modulus condition for $T$} if we have $\tilde{f}^{-1}(\tilde{T}^-) = \emptyset$.
%$\tilde{T}^- \times_{\Bl_{F_T}(\ol{T})} \tilde{X} = \emptyset$.
If an immersion $f : V \to \ol{T}$ satisfies the modulus condition for $T$, then we simply say that $V$ satisfies the modulus condition for $T$.
\end{definition}

\begin{remark}\label{rem-componentwise}
(1) A morphism $X \to \ol{T}$ satisfies the modulus condition for $T$ if and only if for any irreducible component $V$ regarded as an integral scheme, the composite $V \to X \to \ol{T}$ satisfies the modulus condition for $T$. Indeed, denoting by $\tilde{V}$ the strict transform of $V$ along $\pi_T$, there exists a natural morphism $p:\sqcup_{V \in X^{(0)}} \tilde{V} \to \tilde{X}$. It suffices to prove that $p$ is surjective. Since $\tilde{X} \to X$ is birational, the surjectivity of the composite $\sqcup_{V \in X^{(0)}} \tilde{V} \to \sqcup_{V \in X^{(0)}} V \to X$ implies that $p$ is dominant. But it is also proper, since the composite $\sqcup_{V \in X^{(0)}} \tilde{V} \to \tilde{X} \to X$ equals the composite $\sqcup_{V \in X^{(0)}} \tilde{V} \to \sqcup_{V \in X^{(0)}} V \to X$, which is proper. 

(2) Note that Definition \ref{def-modcond} makes sense even if the pullbacks of effective Cartier divisors $f^\ast (T^+), f^\ast (T^-)$ are not defined. 
On the other hand, if they are defined, then the following Lemma \ref{general-mod} shows that the modulus condition in the sense of Definition \ref{def-modcond} can be regarded as a generalization of the classical modulus conditions (as in \cite{Krishna-Park}, \cite{Binda-Saito}, \cite{motmodI} etc.) to relative situations. 
\end{remark}

%\begin{definition}
%An integral closed subscheme $V$ in $\ol{T}$ \emph{satisfies the modulus condition for $T$} if $V$ is not contained in $F_T$, and if the strict transform $\tilde{V}$ of $V$ along $\pi$ does not meet the effective Cartier divisor $\tilde{T}^-$.
%\end{definition}

\begin{lemma}\label{general-mod}
Let $T= (\ol{T},T^+,T^-)$ be a modulus triple.  
Let $f : X \to \ol{T}$ be a morphism in $\Sch$ such that neither of $f^{-1}(T^+)$ nor $f^{-1}(T^-)$ contains any irreducible component of $X$ (In particular, $f$ satisfies Condition \eqref{eqF} of Definition \ref{def-modcond}).
Let $X^N \to X$ be the normalization, and let $\nu : X^N \to X \to \ol{T}$ be the composite. 
Then, the following conditions are equivalent:
\begin{itemize}
\item[(a)] The following inequality of effective Cartier divisors on $X^N$ holds:
\[\nu^\ast T^+  \geq \nu^\ast T^- .\]
\item[(b)] $f$ satisfies the modulus condition for $T$.
\end{itemize}
\end{lemma}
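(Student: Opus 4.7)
My strategy is to reduce to the case where $X$ is normal and integral, and then use the key identities $\pi_T^\ast T^\pm = \tilde{T}^\pm + E_T$ from Definition \ref{d2.2} together with the disjointness supplied by Lemma \ref{l2.1}. First, by Remark \ref{rem-componentwise}(1), condition (b) may be checked componentwise on the irreducible components of $X$ (each viewed as integral); since $X^N$ is the disjoint union of the normalizations of these components, condition (a) also decomposes, so I may assume $X$ is integral. Next, I would replace $X$ by $X^N$: the universal property of the blow-up produces a morphism $\widetilde{X^N} \to \tilde{X}$ over $\ol{T}$, which is proper and dominant (its image contains the dense open $X \setminus f^{-1}(F_T) \subset \tilde{X}$, since the normalization is surjective), hence surjective. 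It follows that $\tilde{f}^{-1}(\tilde{T}^-) = \emptyset$ if and only if $\tilde{\nu}^{-1}(\tilde{T}^-) = \emptyset$. Henceforth I assume $X$ is normal integral and write $f = \nu$.

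On $\tilde{X}$, functoriality of Cartier pullback, which is valid because the generic point of $\tilde{X}$ avoids $|E_T| \cup |\tilde{T}^\pm|$ by the hypothesis that $f^{-1}(T^\pm)$ does not contain $X$, applied to $\pi_T^\ast T^\pm = \tilde{T}^\pm + E_T$ yields
\begin{equation*}
\pi^\ast(f^\ast T^+ - f^\ast T^-) = \tilde{f}^\ast \tilde{T}^+ - \tilde{f}^\ast \tilde{T}^-,
\end{equation*}
where $\pi : \tilde{X} \to X$ denotes the blow-up along $f^{-1}(F_T)$. By Lemma \ref{l2.1}, $\tilde{f}^\ast \tilde{T}^+$ and $\tilde{f}^\ast \tilde{T}^-$ have disjoint supports, so the right-hand side is effective if and only if $\tilde{f}^\ast \tilde{T}^- = 0$, which is in turn equivalent to (b).

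It therefore remains to prove that $D := f^\ast T^+ - f^\ast T^-$ is effective on $X$ if and only if $\pi^\ast D$ is effective on $\tilde{X}$. The forward direction is immediate. For the converse, I would argue valuatively: at every codimension one point $\eta \in X$, normality makes $\mathcal{O}_{X,\eta}$ a DVR, so the ideal of $f^{-1}(F_T)$ localized at $\eta$ is principal; since blowing up a principal ideal is an isomorphism, $\pi$ is an isomorphism over a neighbourhood of $\eta$, yielding a unique lift $\tilde{\eta}$ with $v_\eta(D) = v_{\tilde\eta}(\pi^\ast D) \geq 0$. Effectivity of $D$ on the normal scheme $X$ then follows from the valuative criterion. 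The technically hard part is precisely this descent step: it is what forces the preliminary reduction to the normal case, where the blow-up centre becomes Cartier at every codimension one point and $\pi$ is locally trivial there.
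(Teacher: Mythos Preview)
Your proof is correct and rests on the same two ingredients as the paper's: the identity $\pi_T^\ast T^\pm = \tilde{T}^\pm + E_T$ and the disjointness $\tilde{T}^+ \cap \tilde{T}^- = \emptyset$ from Lemma~\ref{l2.1}. The difference lies in the order of operations and in how effectivity is descended. The paper reduces to $X = V$ integral, passes to the normalization $\tilde{V}^N$ of the \emph{strict transform}, and invokes \cite[Lemma 2.2]{Krishna-Park} for the equivalence of effectivity on $V^N$ and on $\tilde{V}^N$ along the proper surjection $\tilde{V}^N \to V^N$. You instead normalize $X$ \emph{before} taking the strict transform (handling (b) via the proper surjection $\widetilde{X^N}\to\tilde{X}$, which is exactly the second clause of Lemma~\ref{l-containment}), and then replace the citation by a direct valuative argument: on a normal $X$ the blow-up centre becomes principal, hence Cartier, at every codimension-one point, so $\pi$ is a local isomorphism there and effectivity of $D$ is read off from $\pi^\ast D$. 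Your route is self-contained and makes explicit why normality is the crux of the descent; the paper's route is a line shorter but outsources precisely that step to the Krishna--Park lemma, whose content is the valuative argument you spelled out.
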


%\begin{lemma}%\label{general-mod}
%Let $V$ be an integral closed subscheme of $\ol{T}$. Assume that $V$ meets $T^+$ and $T^-$ properly (in particular, $V\not \subset F_T$).
%Let $\nu : V^N \to V \to \ol{T}$ be the composite of the normalization morphism and the closed immersion. 
%Then the following conditions are equivalent:
%\begin{itemize}
%\item[(a)] The inequality of effective Cartier divisors
%\[\nu^\ast T^+  \geq \nu^\ast T^- \]
%holds on $V^N$.
%\item[(b)] $V$ satisfies the modulus condition for $T$.
%\end{itemize}
%\end{lemma}

\begin{proof}
In view of Remark \ref{rem-componentwise} (1) and the fact that $X^N = \sqcup_{V \in X^{(0)}} V^N$, we may assume that $X=V$ is integral. 
Let $\tilde{f} : \tilde{V} \to \Bl_{F_T}(\ol{T})$ be the strict transform of $f$ along $\pi_T$, and let $\tilde{\nu} : \tilde{V}^N \to \tilde{V} \to \Bl_{F_T}(\ol{T})$ be the composite with the normalization morphism.  Note that the proper surjective morphism $\tilde{V}^N \to \tilde{V} \to V$ induces a proper surjective morphism $\tilde{V}^N \to V^N$ by the universality of normalization.
By \cite[Lemma 2.2]{Krishna-Park}, Condition (a) is then equivalent to the following inequality of effective Cartier divisors on $\tilde{V}^N$:
\[
\tilde{\nu}^\ast \pi_T^\ast T^+  \geq \tilde{\nu}^\ast \pi_T^\ast T^- .
\]

By adding $-\tilde{\nu}^\ast E_T$ to both sides, we get equivalently
\begin{equation}\label{eq-l1.2}
%\tilde{\nu}^\ast (\pi^\ast T^+  - E_T) \geq \tilde{\nu}^\ast (\pi^\ast T^- -E_T).
\tilde{\nu}^\ast \tilde T^+  \geq \tilde{\nu}^\ast \tilde T^-.
\end{equation}
%Moreover, we obviously have $E_T=\pi^*T^+\times_{\tilde T}\pi^* T^-$, hence by \cite[Lemma A.1]{KM}:
%\[|\pi^\ast T^+  - E_T| \cap |\pi^\ast T^- -E_T| = \emptyset .\]
%Therefore, 
By Lemma \ref{l2.1}, the inequality \eqref{eq-l1.2} is equivalent to $\tilde{\nu}^\ast \tilde T^- = \emptyset$,
%\[\tilde{\nu}^\ast (\pi^\ast T^- -E_T) = \tilde{V}^N \times_{\Bl_{F_T}(\ol{T})} (\pi^\ast T^- -E_T) =  \emptyset ,\]
hence, noting that $\tilde{V}^N \to \tilde{V}$ is surjective, to
%\[\tilde{V} \times_{\Bl_{F_T}(\ol{T})} (\pi^\ast T^- -E_T) = \emptyset ,\] which is 
Condition (b). 
%This finishes the proof.
\end{proof}

\begin{lemma}[Factorization lemma]\label{l-containment}
Let $T = (\ol{T},T^+,T^-)$ be a modulus triple, and let $f : X \to \ol{T}$ and $g : Y \to \ol{T}$ be morphisms in $\Sch$ satisfying Condition \eqref{eqF} of Definition \ref{def-modcond}. Assume that $g$ factors through $f$. Then, if $f$ satisfies the modulus condition for $T$, so does $g$. Moreover, if the morphism $Y \to X$ is proper and surjective, then the opposite implication also holds.
\end{lemma}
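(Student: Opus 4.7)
The strategy is to descend to a comparison of strict transforms along $\pi_T:\Bl_{F_T}(\ol T)\to \ol T$. Let $h:Y\to X$ be a morphism with $g=f\circ h$, and denote by $\tilde X$, $\tilde Y$ the blow-ups of $X,Y$ along $f^{-1}(F_T)$ and $g^{-1}(F_T)$ respectively. Since $g^{-1}(F_T)=h^{-1}(f^{-1}(F_T))$, Convention~(4) of the introduction identifies $\tilde Y$ with the strict transform of $h$ along $\pi_X:\tilde X\to X$, furnishing a natural morphism $\tilde h:\tilde Y\to \tilde X$. Applying the universal property of $\pi_T$ to $\tilde Y\to Y\xrightarrow{g}\ol T$ forces $\tilde g=\tilde f\circ \tilde h$, since both sides are lifts of this morphism to $\Bl_{F_T}(\ol T)$ and such a lift is unique. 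From this identity, the first implication is immediate:
\[ \tilde g^{-1}(\tilde T^-)=\tilde h^{-1}\bigl(\tilde f^{-1}(\tilde T^-)\bigr), \]
so if the right-hand side is empty, so is the left.

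For the converse, assume $h$ is proper and surjective; I claim that $\tilde h$ inherits both properties. Properness follows by the standard cancellation principle: the composite $\tilde Y\to Y\to X$ is proper (a blow-up followed by $h$), while $\tilde X\to X$ is separated (being a blow-up), hence $\tilde h:\tilde Y\to \tilde X$ is proper. For surjectivity, Condition~\eqref{eqF} for $f$ ensures that the open subscheme $U:=\tilde X\setminus E_{\tilde X}\cong X\setminus f^{-1}(F_T)$ is dense in $\tilde X$: indeed, since $f^{-1}(F_T)$ contains no irreducible component of $X$, the irreducible components of $\tilde X$ are precisely the strict transforms of those of $X$. Because $h$ is surjective, $h^{-1}(X\setminus f^{-1}(F_T))\to X\setminus f^{-1}(F_T)$ is surjective as well, and transporting across the isomorphism of blow-ups over $X\setminus f^{-1}(F_T)$ yields $\tilde h(\tilde Y)\supset U$. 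Since $\tilde h$ is proper, its image is closed and thus contains $\overline U=\tilde X$. Finally, from $\tilde g^{-1}(\tilde T^-)=\emptyset$ together with the surjectivity of $\tilde h$, we conclude $\tilde f^{-1}(\tilde T^-)=\emptyset$, as desired.

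The crucial and only delicate step is the density $\overline U=\tilde X$, which is where Condition~\eqref{eqF} enters essentially: without it, $\tilde X$ could acquire irreducible components entirely supported on $E_{\tilde X}$, and the converse implication would fail. Apart from this point, the proof is a formal exercise in the universal property of blow-ups combined with the standard cancellation principle for proper morphisms into a separated target.
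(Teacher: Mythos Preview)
Your proof is correct and follows the same approach as the paper's (which simply asserts without further detail that the factorization lifts to strict transforms and that a proper surjective $Y\to X$ induces a proper surjective $\tilde Y\to\tilde X$). One small inaccuracy in your closing commentary: the density $\overline U=\tilde X$ does not actually depend on Condition~\eqref{eqF}, because the exceptional locus $E_{\tilde X}$ is \emph{always} an effective Cartier divisor on the blow-up (this is the defining property), hence is locally cut out by a non-zero-divisor and can never contain an irreducible component of $\tilde X$; Condition~\eqref{eqF} is needed only so that the modulus condition of Definition~\ref{def-modcond} is defined in the first place.
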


\begin{proof}
The first assertion is obvious since the strict transform of $g$ along $\pi_T$ factors through the strict transform of $f$ along $\pi_T$, by the universal property of blowing up.
The second assertion follows from the fact that the proper surjective morphism $Y \to X$ induces a proper surjective morphism $\tilde{Y} \to \tilde{X}$, where $\tilde{X}$ and $\tilde{Y}$ denote the strict transforms of $X$ and $Y$, respectively.
\end{proof}

The following lemma is quite useful.

\begin{lemma}\label{fully-contained2}
Let $T$ be a modulus triple. Let $f : X \to \ol{T}$ be a morphism in $\Sch$ which satisfies Condition \eqref{eqF} of Definition \ref{def-modcond}. If $f(X) \subset |T^+|$, then $f$ satisfies the modulus condition for $T$.
\end{lemma}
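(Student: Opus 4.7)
My plan is to reduce to the integral case and then track what happens to the generic point under the blow-up, exploiting the disjointness established in Lemma \ref{l2.1}.

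First, I would invoke Remark \ref{rem-componentwise}(1) to reduce to the case where $X=V$ is integral with generic point $\eta$. Condition \eqref{eqF} then becomes the statement that $f^{-1}(F_T)$ is a proper closed subscheme of $V$, so $\eta \notin f^{-1}(F_T)$, i.e. $f(\eta) \notin |F_T| = |T^+| \cap |T^-|$. Combined with the hypothesis $f(V) \subset |T^+|$, this forces $f(\eta) \in |T^+| \setminus |T^-|$.

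Next, I would examine the strict transform $\tilde{V} = \Bl_{f^{-1}(F_T)}(V)$. Because $V$ is integral and $f^{-1}(F_T)$ is a proper closed subscheme, $\tilde{V}$ is again integral, and its generic point $\tilde{\eta}$ lies over $f(\eta)$. Since $\pi_T$ is an isomorphism over $\ol{T} \setminus F_T$ and $f(\eta) \notin |F_T|$, we have $\tilde{\eta} \notin |E_T|$, and over this locus the equalities $\tilde{T}^+ = \pi_T^* T^+$ and $\tilde{T}^- = \pi_T^* T^-$ hold. Hence $\tilde{\eta} \in |\tilde{T}^+|$ while $\tilde{\eta} \notin |\tilde{T}^-|$.

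Finally, since $\tilde{V}$ is integral, $\tilde{f}(\tilde{V}) \subset \overline{\{\tilde{f}(\tilde{\eta})\}} \subset |\tilde{T}^+|$, because $|\tilde{T}^+|$ is closed. Now I apply Lemma \ref{l2.1}: the separation $\Bl(T)$ is disjoint, so $|\tilde{T}^+| \cap |\tilde{T}^-| = \emptyset$. Therefore $\tilde{f}(\tilde{V}) \cap |\tilde{T}^-| = \emptyset$, which gives $\tilde{f}^{-1}(\tilde{T}^-) = \emptyset$, as required. The only subtlety to check carefully is that $\tilde{V}$ is integral (so that tracking the generic point suffices), but this is immediate from the standard fact that blowing up an integral scheme along a proper closed subscheme yields an integral scheme; there is no genuine obstacle here.
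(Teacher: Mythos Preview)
Your proof is correct and follows essentially the same route as the paper's: reduce to the integral case via Remark \ref{rem-componentwise}(1), track the generic point of the strict transform, observe that it lies outside $E_T$ so that $\tilde{T}^+$ agrees with $\pi_T^*T^+$ there, deduce that $\tilde{f}(\tilde{V})\subset |\tilde{T}^+|$, and conclude using the disjointness from Lemma \ref{l2.1}. Your version is slightly more explicit in separating the generic point $\eta$ of $V$ from that of $\tilde{V}$ and in noting $f(\eta)\notin |T^-|$ directly, but this is only a cosmetic difference.
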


%\begin{lemma}%\label{fully-contained2}
%Let $T$ be a modulus triple. Let $V$ be an integral closed subscheme of $\ol{T}$ which is not contained in $F_T$. Assume that $V \subset T^+$. Then, $V$ satisfies the modulus condition for $T$.
%\end{lemma}

\begin{proof}
By Remark \ref{rem-componentwise} (1), we may assume that $X=V$ is integral. Let $\tilde{f} : \tilde{V} \to \Bl_{F_T}(\ol{T})$ be the strict transform of $f$ along $\pi_T$. We claim that $\tilde{f}(\tilde{V}) \subset |\tilde{T}^+|$. To see this, since $|\tilde{T}^+|$ is closed in $\Bl_{F_T}(\ol{T})$ and $\tilde{V}$ is irreducible, it suffices to prove that the generic point $\eta$ of $\tilde{V}$ maps into $\tilde{T}^+$. 
Since $\eta$ is outside the fiber of $F_T$ (hence of $E_T$) by Condition \eqref{eqF}, the evident equalities
\[
\pi_T^\ast T^+ |_{\tilde{T} \setminus E_T} = (\pi_T^\ast T^+ - E_T) |_{\tilde{T} \setminus E_T} = \tilde{T}^+ |_{\tilde{T} \setminus E_T}
\]
imply that $\eta \in \tilde{T}^+$, which proves the claim. Recalling that $\tilde{T}^+ \cap \tilde{T}^- = \emptyset$, we have $\tilde{V} \times_{\Bl_{F_T}(\ol{T})} \tilde{T}^- = \emptyset$. %This finishes the proof.
\end{proof}

%\begin{proof}
%Recall that the strict transform $\tilde{V}$ is the closure of $V|_{\ol{T} \setminus F_T}$ in $\tilde{T}$, where we regard $V|_{\ol{T} \setminus F_T}$ as a closed subscheme of $\tilde{T} \setminus E_T$ through the isomorphism $\tilde{T} \setminus E_T \xrightarrow{\sim} \ol{T} \setminus F_T$ induced by $\pi$. Since $\pi$ induces a morphism $\tilde{V} \to V$, the assumption that $V \subset T^+$ implies that $\tilde{V} \subset \pi^\ast T^+$. In particular, we have
%\[\tilde{V} |_{\tilde{T} \setminus E_T} \subset \pi^\ast T^+ |_{\tilde{T} \setminus E_T} = (\pi^\ast T^+ - E_T) |_{\tilde{T} \setminus E_T}, \]
%where we regard the effective Cartier divisor $\pi^\ast T^+ - E_T$ as a closed subscheme of $\tilde{T}$.
%Since the generic point of $\tilde{V}$ is outside $E_T$, and since $\pi^\ast T^+ - E_T$ is closed in $\tilde{T}$, we have
%\[\tilde{V} \subset \pi^\ast T^+ - E_T.\]
%Since $|\pi^\ast T^+ - E_T| \cap |\pi^\ast T^- - E_T| = \emptyset$, we conclude that $\tilde{V} \cap |T^- - E_T| = \emptyset$. This finishes the proof.
%\end{proof}

\begin{lemma}[Modulus condition for proper image]\label{l-pc}
Let $T = (\ol{T},T^+,T^-)$ be a modulus triple, and let $f : \ol{U} \to \ol{T}$ be a proper morphism such that the pullback of effective Cartier divisors $U^+:=f^\ast T^+$, $U^-:=f^\ast T^-$ are defined. Let $U:=(\ol{U},U^+,U^-)$ be the induced modulus triple. Let $W$ be an integral closed subscheme of $\ol{U}$ and set $V:=f(W)_\red$.
Then, $V$ satisfies the modulus condition for $T$ if and only if $W$ satisfies the modulus condition for $U$.
\end{lemma}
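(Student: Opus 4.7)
The strategy is to compare the two blowups $\pi_U : \Bl_{F_U}(\ol{U}) \to \ol{U}$ and $\pi_T : \Bl_{F_T}(\ol{T}) \to \ol{T}$ via $f$, and then reduce the remaining step to the Factorization Lemma~\ref{l-containment}. The first key observation is that, because $U^\pm = f^\ast T^\pm$, the defining ideal sheaves satisfy $I_{U^\pm} = f^{-1} I_{T^\pm}\cdot \sO_{\ol U}$, whence
\[
I_{F_U} = I_{U^+}+I_{U^-} = f^{-1}(I_{T^+}+I_{T^-})\cdot \sO_{\ol U} = f^{-1} I_{F_T} \cdot \sO_{\ol U},
\]
so that $F_U = f^{-1}(F_T)$ as closed subschemes of $\ol U$. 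Convention~(4) then supplies a unique lift $\tilde f : \Bl_{F_U}(\ol U) \to \Bl_{F_T}(\ol T)$ over $f$. A short computation with invertible ideals gives $\tilde f^\ast E_T = E_U$; combining this with $\tilde f^\ast \pi_T^\ast T^\pm = \pi_U^\ast U^\pm$ yields the central identity $\tilde f^\ast \tilde T^\pm = \tilde U^\pm$.

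Next I would check that Condition~\eqref{eqF} passes between the two sides: as $W$ is integral, $W$ satisfies~\eqref{eqF} for $U$ iff $W \not\subseteq F_U = f^{-1}(F_T)$ iff $f(W) \not\subseteq F_T$ iff $V \not\subseteq F_T$ iff $V$ satisfies~\eqref{eqF} for $T$. So the two modulus conditions in the statement are simultaneously defined. Now let $\tilde W_U$ be the strict transform of $W\hookrightarrow \ol U$ along $\pi_U$. Because $F_U = f^{-1}(F_T)$, Convention~(4) identifies $\tilde W_U = \Bl_{W\cap F_U}(W)$ with the strict transform of the composite $W \hookrightarrow \ol U \to \ol T$ along $\pi_T$, carrying the morphism $\tilde f|_{\tilde W_U}$ to $\Bl_{F_T}(\ol T)$. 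Using $\tilde f^\ast \tilde T^- = \tilde U^-$ we obtain, as subsets of $\tilde W_U$,
\[
(\tilde f|_{\tilde W_U})^{-1}(\tilde T^-) \;=\; \tilde W_U \cap \tilde f^{-1}(\tilde T^-) \;=\; \tilde W_U \cap \tilde U^-,
\]
so that $W\hookrightarrow\ol U$ satisfies the modulus condition for $U$ if and only if the composite $W\to \ol T$ satisfies it for $T$.

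Finally, since $f$ is proper and $W$ is integral, the induced morphism $W \to V$ is proper and surjective (its image is closed and set-theoretically equal to $V$). Applying Lemma~\ref{l-containment} to the factorization $W \to V \hookrightarrow \ol T$ gives that $W \to \ol T$ satisfies the modulus condition for $T$ iff $V \hookrightarrow \ol T$ does; chaining this with the equivalence of the previous paragraph concludes the proof. The step that needs the most care is the compatibility $\tilde f^\ast E_T = E_U$: without flatness of $f$ one cannot identify $\Bl_{F_U}(\ol U)$ with $\Bl_{F_T}(\ol T) \times_{\ol T} \ol U$, so the verification must proceed directly from the invertible-ideal-sheaf description of each exceptional divisor, using the ideal-theoretic equality $I_{F_U} = f^{-1} I_{F_T} \cdot \sO_{\ol U}$ established at the outset.
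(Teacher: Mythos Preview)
Your proof is correct and follows essentially the same approach as the paper: both establish $F_U = f^{-1}(F_T)$, lift $f$ to $\tilde f$ between the two blowups, and prove the key identity $\tilde f^\ast \tilde T^- = \tilde U^-$ (this is the paper's Claim~\ref{claim1.13}). The only difference is in the final step: the paper argues directly via the set-theoretic projection formula and the proper surjection $\tilde W \to \tilde V$ induced by $\tilde f$, whereas you first pass through the equivalence ``$W$ satisfies the modulus condition for $U$ $\Leftrightarrow$ the composite $W\to\ol T$ satisfies it for $T$'' and then invoke the Factorization Lemma~\ref{l-containment} for the proper surjection $W \to V$ --- a slightly more modular but equivalent route.
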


\begin{proof}
Noting that $f^{-1}(F_T) = F_U$,  we have%the surjectivity of the natural morphism $W \to V$ {\color{red} why is it necessary?} implies that 
\[\text{$V \not\subset F_T$ if and only if $W \not\subset F_U$. }\]
Assume that these equivalent conditions hold.

Let $\pi_T : \tilde{T} \to \ol{T}$ be the blow up of $\ol{T}$ along $F_T$, and let $\pi_U : \tilde{U} \to \ol{U}$ be the blow up of $\ol{U}$ along $F_U$. The universal property of blowing up implies that there exists a morphism $\tilde{f} : \tilde{U} \to \tilde{T}$ which makes the following diagram commute:
\[\xymatrix{
\tilde{U} \ar[r]^{\tilde{f}} \ar[d]_{\pi_U} & \tilde{T} \ar[d]^{\pi_T} \\
\ol{U} \ar[r]^f & \ol{T}.
}\]
By the assumption above, we can consider the strict transforms $\tilde{V} \subset \tilde{T}$ and $\tilde{W} \subset \tilde{U}$ of $V$ and $W$, respectively.

\begin{claim}\label{claim1.13} Set $\tilde{T}^- := \pi_T^\ast T^- -E_T$ and $\tilde{U}^- := \pi_U^\ast U^- -E_U$. Then, we have the following equality of effective Cartier divisors on $\tilde{U}$:
\[\tilde{f}^\ast \tilde{T}^- = \tilde{U}^-. \]
\end{claim}

\begin{proof}
The commutative diagram above implies that 
\begin{gather*}
\tilde{f}^\ast\pi_T^\ast T^-=\pi_U^\ast f^\ast T^-=\pi_U^\ast U^-, \\ %{\color{red}\text{ is $\tilde{f}^\ast(\pi_T^\ast T^-)$ defined?}} 
\tilde{f}^\ast E_T=\tilde{f}^{-1}\pi_T^{-1}(F_T)=\pi_U^{-1} f^{-1}(F_T)=\pi_U^{-1} F_U=E_U,%{\color{red}\text{ I hope this is what you meant.}}
\end{gather*}  
where the notation makes sense by Convention (4).

This finishes the proof of the claim.
\end{proof}

We come back to the proof of Lemma \ref{l-pc}. We need to prove
\[\text{$\tilde{V} \cap \tilde{T}^- = \emptyset$ if and only if $\tilde{W} \cap \tilde{U}^- = \emptyset$.}\]

The ``only if'' part is immediate from Claim \ref{claim1.13}. Assume that $\tilde{W} \cap \tilde{U}^- = \emptyset$. Then, we have
\[
\emptyset = \tilde{f}(\tilde{W} \cap \tilde{U}^-) = \tilde{f}(\tilde{W} \cap \tilde{f}^{-1}(\tilde{T}^-)) = \tilde{f}(\tilde{W}) \cap \tilde{T}^- 
\]
by the (set-theoretic) projection formula and Claim \ref{claim1.13}.  Since the proper surjective morphism $\tilde{f}$ induces a proper surjective morphism $\tilde{W} \to \tilde{V}$, we obtain $\tilde{V} \cap \tilde{T}^- = \emptyset$. 
%This finishes the proof.
\end{proof}

\subsection{The category $\protect\ulTCor$ of modulus triples}
%\subsection{Modulus correspondences between modulus triples}

\begin{definition}\label{d2.1}
Let $S,T$ be two modulus triples which are interiorly smooth.
An \emph{elementary modulus correspondence from $S$ to $T$} is an elementary finite correspondence $V\in \Cor (S^{\circ} , T^{\circ} )$ whose closure $\ol{V}$ in $\ol{S} \times \ol{T}$ satisfies the modulus condition for 
\[
S \otimes T^\vee = (\ol{S} \times \ol{T} ,  S^+\times \ol{T} + \ol{S}\times T^- , S^-\times \ol{T} + \ol{S}\times T^+),
\]
and is proper over $\ol{S}$. We write $\ulTCor(S,T)$ for the free abelian group with basis the elementary modulus correspondences from $S$ to $T$.
\end{definition}

\begin{remark}\label{rmq-position}
Let $V \in \Cor(S^\o,T^\o)$ be an elementary finite correspondence. Let $\ol{V}$ be the closure of $V$ in $\ol{S} \times \ol{T}$, that we regard as an integral $k$-scheme.  Then the pullbacks of effective Cartier divisors
\[
S^+ |_{\ol{V}}, \ \ S^- |_{\ol{V}}, \ \ T^+ |_{\ol{V}}
\]
are 
%always 
defined. Indeed, since $\ol{V}$ is dominant over $\ol{S}$, its image is not contained in $|S^\pm|$. Since $V \subset S^\o \times T^\o$, $\ol{V} \not\subset \ol{S} \times |T^+|$. 

We need to note that the pullback $T^- |_{\ol{V}}$ is not necessarily defined; equivalently, the pullback $(T^-\cap T^\o) |_V$ is not necessarily defined. However, in this situation, the modulus condition is always satisfied by the following lemma.
\end{remark}

%The following elementary lemma is useful.

\begin{lemma}\label{mod-criterion1}
Let $S$ and $T$ be modulus triples which are interiorly smooth. Let $V \in \Cor(S^\o,T^\o)$ be an elementary finite correspondence whose closure is proper over $\ol{S}$, and assume that the projection of $V$ in $T^\o$ is contained in $T^{-} \cap T^\o$ (or equivalently, in $T^{-}$). Then $V \in \ulTCor(S,T)$.
\end{lemma}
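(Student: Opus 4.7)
The plan is to reduce the verification of the modulus condition for $\ol V$ (with respect to the triple $S\otimes T^\vee$) to Lemma~\ref{fully-contained2}, by exhibiting $\ol V$ as contained in the support of the ``positive'' divisor of $S\otimes T^\vee$ and then checking Condition \eqref{eqF}.

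Write $D^+ = S^+\times\ol T + \ol S\times T^-$ and $D^- = S^-\times\ol T+\ol S\times T^+$ for the two divisors defining $S\otimes T^\vee$. The hypothesis that the second projection of $V$ lands in $T^-\cap T^\o$ gives
\[
V \;\subset\; S^\o\times (T^-\cap T^\o) \;\subset\; \ol S\times |T^-|,
\]
hence, taking closures in $\ol S\times \ol T$, also $\ol V\subset \ol S\times|T^-|\subset |D^+|$. So the first hypothesis of Lemma~\ref{fully-contained2} holds, and we are reduced to checking that the inclusion $\ol V\hookrightarrow \ol S\times \ol T$ satisfies Condition \eqref{eqF} for $S\otimes T^\vee$, i.e.\ that the integral scheme $\ol V$ is not contained in $F_{S\otimes T^\vee}=D^+\cap D^-$. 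Since we already have $\ol V\subset |D^+|$, this amounts to showing $\ol V\not\subset |D^-|=\bigl(|S^-|\times\ol T\bigr)\cup\bigl(\ol S\times |T^+|\bigr)$, and as $\ol V$ is irreducible it is enough to rule out each of the two components separately.

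For the first: $V$ is an elementary finite correspondence, hence integral, finite and surjective over some connected component $S^\o_0$ of $S^\o$; since $S^\o$ is open dense in $\ol S$ (its complement $|S^+|$ is a Cartier divisor, of codimension $1$) and $\ol V\to \ol S$ is proper, the image of $\ol V$ in $\ol S$ is closed and contains $S^\o_0$, so it equals the closure of $S^\o_0$, which is an irreducible component of $\ol S$. As $|S^-|$ has codimension $\geq 1$ it cannot contain an entire component of $\ol S$, so $\ol V\not\subset |S^-|\times\ol T$. For the second: assuming $V\ne\emptyset$ (otherwise the conclusion is trivial), $V$ is an open dense subscheme of $\ol V$ lying in $S^\o\times T^\o$ and hence disjoint from $\ol S\times |T^+|$, which forces $\ol V\not\subset \ol S\times|T^+|$. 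Combining these two, $\ol V\not\subset |D^-|$ and Condition~\eqref{eqF} is verified.

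Applying Lemma~\ref{fully-contained2} to $\ol V\hookrightarrow \ol S\times\ol T$ and the triple $S\otimes T^\vee$ then shows that $\ol V$ satisfies the modulus condition for $S\otimes T^\vee$. Together with the assumed properness of $\ol V\to \ol S$, this is exactly the content of Definition~\ref{d2.1}, so $V\in\ulTCor(S,T)$. I do not anticipate a serious obstacle: the only slightly delicate step is the elementary geometric observation that $\ol V$ dominates a component of $\ol S$, which uses the ``elementary correspondence + properness'' hypothesis in an essential way.
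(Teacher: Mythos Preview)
Your proof is correct and follows essentially the same approach as the paper: both show $\ol V\subset |(S\otimes T^\vee)^+|$ and then verify Condition~\eqref{eqF} in order to apply Lemma~\ref{fully-contained2}. The only cosmetic difference is that the paper checks~\eqref{eqF} directly at the generic point $\eta$ of $V$ (noting $\eta\in S^\o\times T^\o$ already excludes the $T^+$-factor, so only $\eta\notin S^-\times\ol T$ needs to be ruled out via dominance), whereas you argue with the whole of $\ol V$ and rule out the two components of $|D^-|$ separately; both are equally valid.
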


\begin{proof}
Let $\ol{V}$ be the closure of $V$ in $\ol{S} \times \ol{T} = \ol{S \otimes T^\vee}$.
First, we check that $\ol{V}$ is not contained in $F_{S \otimes T^\vee}$. Let $\eta$ be the generic point of $V$. Since $\eta \in S^\o \times T^\o$, It suffices to show 
\[
\eta \notin F_{S \otimes T^\vee}^\o := F_{S \otimes T^\vee} \cap S^\o \times T^\o .
\] 
%{\color{red} this notation was reserved to Cartier divisors}. 
The right hand side can be calculated as follows (as a set)
\[
 F_{S \otimes T^\vee}^\o = (\ol{S} \times T^-) \cap (S^- \times \ol{T}) \cap (S^\o \times T^\o).
\]
Therefore, if $\eta \in F_{S \otimes T^\vee}^\o$, then we have $\eta \in S^- \times \ol{T}$, which contradicts the fact that $\ol{V}$ is dominant over $\ol{S}$.
Therefore, we conclude that $\eta \notin F_{S \otimes T^\vee}$, as desired. 

By Lemma \ref{fully-contained2}, it now suffices to show that $\ol{V} \subset (S \otimes T^\vee )^+$. But
\[
\ol{V} \subset \ol{S} \times T^- \subset (S^+ \times \ol{T}+\ol{S} \times T^- ) = (S \otimes T^\vee )^+.
\]
%This finishes the proof.
\end{proof}

\begin{lemma}\label{-goes-}
Let $S,T$ be two (interiorly smooth) modulus triples and let $V\in \ulTCor(S, T)$ be elementary. Then
\[V \cap (|S^-| \times \olT)\subset \olS\times |T^-|.\]
If moreover the image of $V$ in $\ol{T}$ is not contained in $T^-$,  we have 
\[
T^- |_{V^N} \geq S^- |_{V^N} .
\]
%the composite $V \to S^\o \times T^\o \to T^\o$ induces a map of sets $V \cap ((S^\o \cap |S^-|) \times T^\o) \to T^\o \cap |T^-|$.
\end{lemma}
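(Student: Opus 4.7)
The plan is to split into two cases according to whether $\ol V\subset \ol S\times |T^-|$, prove the second assertion first in the non-trivial case via Lemma \ref{general-mod}, and then derive the first assertion by passing to supports.

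If $\ol V\subset \ol S\times|T^-|$, then $V\subset \ol S\times|T^-|$, the inclusion $V\cap(|S^-|\times\ol T)\subset \ol S\times|T^-|$ is immediate, and the hypothesis of the second assertion is violated, so this case contributes nothing further.

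Otherwise $\ol V\not\subset \ol S\times|T^-|$. Since $V\in \ulTCor(S,T)$ is elementary, $\ol V$ is dominant over some irreducible component of $\ol S$, giving $\ol V\not\subset|S^\pm|\times \ol T$; and since $V\subset S^\o\times T^\o$ is dense in $\ol V$, one also has $\ol V\not\subset \ol S\times|T^+|$. These four non-containments are precisely the hypotheses required to apply Lemma \ref{general-mod} to the closed immersion $\ol V\hookrightarrow \ol S\times\ol T$ with respect to the modulus triple $S\otimes T^\vee$. Combined with the modulus condition that $\ol V$ satisfies for $S\otimes T^\vee$, the lemma yields on $\ol V^N$ the inequality of effective Cartier divisors
\[
p_S^\ast S^+ + p_T^\ast T^- \;\geq\; p_S^\ast S^- + p_T^\ast T^+,
\]
where $p_S,p_T$ denote the projections $\ol V^N\to \ol S$ and $\ol V^N\to \ol T$. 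Restricting to the open subscheme $V^N\subset \ol V^N$, whose image lies in $S^\o\times T^\o$, makes the $S^+$ and $T^+$ contributions vanish, yielding $T^-|_{V^N}\geq S^-|_{V^N}$; this is the second assertion.

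The first assertion in this case then follows by passing to supports: one obtains $p_S^{-1}(|S^-|)\subset p_T^{-1}(|T^-|)$ as closed subsets of $V^N$, so that any $(s,t)\in V$ with $s\in|S^-|$ lifts through the surjective normalization $V^N\twoheadrightarrow V$ to some $v\in V^N$ with $p_S(v)=s\in|S^-|$, forcing $p_T(v)=t\in|T^-|$. The only real hurdle is the case distinction---needed so that the pullback $T^-|_{\ol V}$ actually makes sense---together with the routine verification of the four non-containment hypotheses of Lemma \ref{general-mod}; once those are in place, the rest is formal.
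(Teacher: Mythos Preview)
Your proof is correct and follows essentially the same route as the paper's: split on whether the image of $V$ lands in $|T^-|$, apply Lemma \ref{general-mod} in the non-trivial case to get the inequality on $\ol V^N$, restrict to $V^N$, and then pass to supports using surjectivity of $V^N\to V$. The only difference is that you spell out explicitly the four non-containment hypotheses needed for Lemma \ref{general-mod} (which the paper leaves to Remark \ref{rmq-position}), but this is a matter of presentation rather than strategy.
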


\begin{proof}
If the image of $V$ in $\ol{T}$ is contained in $T^-$, the first assertion is trivial. Assume the contrary. Then, by Lemma \ref{general-mod}, the modulus condition implies that
\[
S^+ |_{\ol{V}^N} + T^- |_{\ol{V}^N} \geq S^- |_{\ol{V}^N} + T^+ |_{\ol{V}^N} ,
\]
where $\ol{V}^N$ is the normalization of $\ol{V}$. Therefore, denoting by $V^N$ the normalization of $V$, we have 
\[
T^- |_{V^N} \geq S^- |_{V^N} .
\]

This proves the second assertion. Since the natural morphism $V^N \to V$ is surjective, the first assertion follows. 
\end{proof}

\subsubsection{Composition}

\begin{prop}\label{p-composition}
Let $T_1,T_2,T_3$ be three modulus triples, and let $\alpha : T_1 \to T_2$ and $\beta : T_2 \to T_3$ be modulus correspondences. 
Note that we have $\alpha \in \Cor(T_1^{\circ},T_2^{\circ})$ and $\beta \in \Cor(T_2^{\circ},T_3^{\circ})$ by definition.
Then, the finite correspondence $\beta \circ \alpha \in \Cor(T_1^{\circ},T_3^{\circ})$ defines a modulus correspondence $T_1 \to T_3$.
\end{prop}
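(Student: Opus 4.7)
By bilinearity we may assume $\alpha,\beta$ elementary and fix an elementary component $W$ of the cycle $\beta \circ \alpha = (p_{13})_\ast(\alpha \times_{T_2^\o}\beta)$, written as $W = p_{13}(Z)$ for some irreducible $Z \subset T_1^\o\times T_2^\o\times T_3^\o$. Let $\ol Z\subset X:= \ol{T_1}\times\ol{T_2}\times\ol{T_3}$ be the closure of $Z$ and $\ol W\subset \ol{T_1}\times\ol{T_3}$ the closure of $W$. We must check that $\ol W$ is proper over $\ol{T_1}$ and satisfies the modulus condition for $T_1\otimes T_3^\vee$.

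\emph{Properness.} Since $\ol\beta \to \ol{T_2}$ is proper by hypothesis, the first projection $\ol\alpha\times_{\ol{T_2}}\ol\beta\to\ol\alpha$ is proper; composing with the proper map $\ol\alpha\to\ol{T_1}$ shows that $\ol Z$, being a closed subscheme of $\ol\alpha\times_{\ol{T_2}}\ol\beta$, is proper over $\ol{T_1}$. The map $\ol Z\to \ol{T_1}\times\ol{T_3}$ is then proper (separatedness of $\ol{T_1}\times\ol{T_3}\to\ol{T_1}$), so its image is closed and dense in $\ol W$, hence equal to $\ol W$. Combined with properness of $\ol Z\to\ol{T_1}$, the proper surjection $\ol Z\twoheadrightarrow \ol W$ yields properness of $\ol W \to \ol{T_1}$ by the standard sorite.

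\emph{Modulus condition: generic case.} Since $\ol Z\twoheadrightarrow \ol W$ is proper and surjective, the converse part of Lemma~\ref{l-containment} reduces us to verifying the modulus condition for the composite $\ol Z\to \ol{T_1}\times\ol{T_3}$ with respect to $T_1\otimes T_3^\vee$. The factorizations $\ol Z\to\ol\alpha\hookrightarrow\ol{T_1}\times\ol{T_2}$ and $\ol Z\to\ol\beta\hookrightarrow\ol{T_2}\times\ol{T_3}$, combined with the direct part of Lemma~\ref{l-containment}, show that $\ol Z$ satisfies the modulus conditions for $T_1\otimes T_2^\vee$ and $T_2\otimes T_3^\vee$ (Condition~\eqref{eqF} being straightforward given $Z\subset T_1^\o\times T_2^\o\times T_3^\o$ and the dominance of $\ol Z$ over a component of $\ol{T_1}$). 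In the generic case where $p_2(\ol Z)\not\subset |T_2^-|$ and $p_3(\ol Z)\not\subset |T_3^-|$, all pullbacks $T_i^\pm|_{\ol Z^N}$ are defined, so Lemma~\ref{general-mod} converts these two modulus conditions into the inequalities
\begin{align*}
T_1^+|_{\ol Z^N} + T_2^-|_{\ol Z^N} &\geq T_1^-|_{\ol Z^N} + T_2^+|_{\ol Z^N},\\
T_2^+|_{\ol Z^N} + T_3^-|_{\ol Z^N} &\geq T_2^-|_{\ol Z^N} + T_3^+|_{\ol Z^N}.
\end{align*}
Adding and canceling $T_2^\pm|_{\ol Z^N}$ yields $T_1^+|_{\ol Z^N}+T_3^-|_{\ol Z^N}\geq T_1^-|_{\ol Z^N}+T_3^+|_{\ol Z^N}$, and the reverse direction of Lemma~\ref{general-mod} gives the desired modulus condition for $T_1\otimes T_3^\vee$.

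\emph{Degenerate cases and main obstacle.} If $p_3(\ol Z)\subset |T_3^-|$, then the image of $\ol Z$ in $\ol{T_1}\times\ol{T_3}$ lies in $\ol{T_1}\times |T_3^-|\subset |(T_1\otimes T_3^\vee)^+|$, and Lemma~\ref{fully-contained2} applies directly. If instead $p_2(\ol Z)\subset |T_2^-|$, then the image of $\ol Z$ in $\ol{T_2}\times\ol{T_3}$ lies in $\ol\beta\cap(|T_2^-|\times\ol{T_3})$, which by Lemma~\ref{-goes-} applied to $\beta$ is contained in $\ol{T_2}\times|T_3^-|$, so we reduce to the previous subcase. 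The main obstacle is precisely this handling of the degenerate situations: the clean ``add-and-cancel'' argument breaks down because certain pullbacks of Cartier divisors need not be defined, and Lemma~\ref{-goes-} is the key bridge that funnels all such pathologies into the range of Lemma~\ref{fully-contained2}.
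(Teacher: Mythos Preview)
Your argument follows the same strategy as the paper: lift to a component $Z$ of $\alpha\times_{T_2^\o}\beta$ above the chosen component $W$ of $\beta\circ\alpha$, establish properness via the proper surjection $\ol Z\twoheadrightarrow\ol W$, and in the generic case add the two modulus inequalities on $\ol Z^N$ and cancel $T_2^\pm$, while handling the degenerate cases via Lemma~\ref{fully-contained2}. Your systematic use of Lemma~\ref{l-containment} to transport the modulus conditions from $\ol\alpha$ and $\ol\beta$ directly to $\ol Z$ is a mild streamlining of the paper's route, which instead pulls the inequalities back from $\ol\alpha^N$ and from the image $\ol\gamma_0^N$ of $\ol Z$ in $\ol{T_2}\times\ol{T_3}$ after first proving (Claim~\ref{claim1.22}) that $\gamma'\to\alpha$ is surjective.

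There is one slip in your second degenerate case. You assert that $\ol\beta\cap(|T_2^-|\times\ol{T_3})\subseteq\ol{T_2}\times|T_3^-|$ ``by Lemma~\ref{-goes-}'', but that lemma is stated only for $\beta$, not for its closure $\ol\beta$, and the extension to $\ol\beta$ is in fact false (take $T_2=(\P^1,\infty+0,0)$, $T_3=(\P^1,\infty,\emptyset)$ and $\beta$ the diagonal: then $(0,0)\in\ol\beta\cap(|T_2^-|\times\ol{T_3})$ but $|T_3^-|=\emptyset$). The fix is immediate and does not affect your overall structure: since the generic point of $Z$ lies in $T_1^\o\times T_2^\o\times T_3^\o$, its image in $T_2^\o\times T_3^\o$ lies in $\beta\cap(|T_2^-|\times T_3^\o)$, and now Lemma~\ref{-goes-} applies as stated to give $p_3(Z)\subset|T_3^-|$, hence $p_3(\ol Z)\subset|T_3^-|$, reducing to your first degenerate case.
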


\begin{proof}
Clearly, we may assume that $\alpha$ and $\beta$ are elementary modulus correspondences.
Let $\gamma$ be an arbitrary irreducible component of $\beta \circ \alpha$.
It suffices to show that $\gamma$ defines a modulus correspondence $T_1 \to T_3$. 
Let $\ol{\alpha}$, $\ol{\beta}$ and $\ol{\gamma}$ be the closures in $\ol{T}_1 \times \ol{T}_2$, $\ol{T}_2 \times \ol{T}_3$ and $\ol{T}_1 \times \ol{T}_3$, respectively.

\textbf{Step 1.} First, we prove that $\ol{\gamma}$ is proper over $\ol{T}_1$.
Since the projection $T_1^{\circ} \times T_2^{\circ} \times T_3^{\circ} \to T_1^{\circ} \times T_3^{\circ}$ induces a surjection $|\alpha \times T_3^{\circ} \cap T_1^{\circ} \times \beta| \to |\beta \circ \alpha|$, there exists an irreducible component $\gamma'$ of $\alpha \times T_3^{\circ} \cap T_1^{\circ} \times \beta$ whose generic point lies over the generic point of $\gamma$.
Let $\ol{\gamma}'$ be the closure of $\gamma'$ in $\ol{T}_1 \times \ol{T}_2 \times \ol{T}_3$.
Since $\ol{\alpha}$ is proper over $\ol{T}_1$, $\ol{\gamma}'$ is also proper over $\ol{T}_1 \times \ol{T}_3$, hence there is a natural proper surjective morphism $\ol{\gamma}' \to \ol{\gamma}$. Moreover, since $\ol{\beta}$ is proper over $\ol{T}_2$, $\ol{\gamma}'$ is also proper over $\ol{\alpha}  \subset \ol{T}_1 \times \ol{T}_2$, therefore $\ol{\gamma}'$ is proper over $\ol{T}_1$. In particular, $\ol{\gamma}$ is proper over $\ol{T}_1$.

%{\color{red} To my surprise, this check is missing in \cite{motmod}. I propose to warn SY, to include the check in \cite{motmod} and to just quote it here. (The argument has nothing to do with modulus.)}

\textbf{Step 2.}  We prove that $\gamma \in \ulTCor(T_1,T_3)$. By \textbf{Step 1}, it suffices to prove that $\ol{\gamma}$ satisfies the modulus condition for $T_1 \otimes T_3^\vee$.
If $\gamma$ is contained in the fiber of $|T_3^-|$, then we are done by Lemma \ref{mod-criterion1}.

\begin{claim}\label{claim1.22}
Assume that $\gamma$ is not contained in the fiber of $|T_3^-|$. Then the following assertions hold:

(a) $\alpha$ is not contained in the fiber of $|T_2^-|$.

(b) $\beta$ is not contained in the fiber of $|T_3^-|$.

(c) The natural morphism $\gamma' \to \alpha$ is proper surjective, where $\gamma'$, as in \textbf{Step 1}, is an irreducible component of $\alpha \times T_3^{\circ} \cap T_1^{\circ} \times \beta$ which lies over $\gamma$. 
In particular, $\gamma'$ is not contained in the fiber of $|T_2^-|$. 
\end{claim}

\begin{proof}
(a): Assume that $\alpha \subset \ol{T}_1 \times T_2^-$. By Lemma \ref{-goes-}, we have $\beta \cap T_2^- \times \ol{T}_3 \subset \ol{T}_2 \times T_3^-$. Therefore, 
\begin{align*}
\alpha \times \olT_3 \cap \olT_1 \times \beta \subset (\ol{T}_1 \times T_2^- \times \olT_3) \cap  (\olT_1 \times \beta) \subset  \ol{T}_1 \times T_2^- \times T_3^-
%\subset  \ol{T}_1 \times \ol{T}_2 \times T_3^-
\end{align*}
hence \[\ol{\alpha \times T^\o_3 \cap T^\o_1 \times \beta} \subset \ol{T}_1 \times \ol{T}_2 \times T_3^-.\]
This implies that $\ol{\gamma}$ is contained in $\ol{T}_1 \times T_3^-$, contrary to the assumption on $\gamma$.

(b): Assume that $\beta \subset \ol{T}_2 \times T_3^-$. Then, we have $\ol{\alpha \times T^\o_3 \cap T^\o_1 \times \beta} \subset  \ol{T}_2 \times \ol{T}_2 \times T_3^-$, therefore we obtain the same contradiction. 

(c): Note that the projection $T_1^\o \times T_2^\o \times T_3^\o \to T_1^\o \times T_2^\o$ induces a morphism $\gamma' \to \alpha$. Since $\alpha$ is finite over an irreducible component of $T_1^\o$, $\gamma'$ is proper over the same component of $T_1^\o$. Therefore, $\gamma' \to \alpha$ is proper. It suffices to prove that $\gamma' \to \alpha$ is dominant. Let $\eta$ be the generic point of $\gamma'$. Then it lies over the generic point of an irreducible component of $T_1^\o$ since $\gamma' \to \gamma$ is surjective. Let $\xi$ be the image of $\eta$ in $\alpha$. Note that $\xi$ lies over the generic point of an irreducible component of $T_1^\o$. Since $\alpha$ is irreducible and $\alpha \to T_1^\o$ is finite, $\xi$ must be the generic point of $\alpha$. This shows that $\gamma' \to \alpha$ is dominant, which proves the first assertion.
Finally, we prove the second assertion. If $\gamma' \subset \ol{T}_1 \times |T_2^-| \times \ol{T}_3$, then we have $\alpha  \subset  \ol{T}_1 \times |T_2^-|$ by the surjectivity of $\gamma' \to \alpha$, which is contrary to (a). 

This finishes the proof of Claim \ref{claim1.22}.
\end{proof}

In view of Claim \ref{claim1.22} and Remark \ref{rmq-position}, the pullbacks of effective Cartier divisors
\[
T_1^\pm |_{\ol{\alpha}}, \ T_2^\pm |_{\ol{\alpha}},\ T_2^\pm |_{\ol{\beta}}, \ T_3^\pm |_{\ol{\beta}},\ T_1^\pm |_{\ol{\gamma}}, \ T_3^\pm |_{\ol{\gamma}}, \  T_1^\pm |_{\ol{\gamma}'}, \ T_2^\pm |_{\ol{\gamma}'} \ T_3^\pm |_{\ol{\gamma}'}
\]
are all defined.

Since $\gamma' \to \alpha$ is surjective (in particular, dominant) by Claim \ref{claim1.22} (c), it induces a morphism $\ol{\gamma}^{\prime N} \to \ol{\alpha}^N$. Since $\alpha \in \ulTCor (T_1,T_2)$, we have
\begin{equation*}
T_1^+ |_{\ol{\alpha}^N} + T_2^- |_{\ol{\alpha}^N} \geq T_1^- |_{\ol{\alpha}^N} + T_2^+ |_{\ol{\alpha}^N},
\end{equation*}
which implies 
\begin{equation}\label{eq1.22a}
T_1^+ |_{\ol{\gamma}^{\prime N}} + T_2^- |_{\ol{\gamma}^{\prime N}} \geq T_1^- |_{\ol{\gamma}^{\prime N}} + T_2^+ |_{\ol{\gamma}^{\prime N}}.
\end{equation}
Note that all the pullbacks of effective Cartier divisors which appear in the above inequalities are defined. 

Let $\gamma_0$ be the image of $\gamma'$ by the projection $T_1^\o \times T_2^\o \times T_3^\o \to T_2^\o \times T_3^\o$.
Then, clearly we have $\gamma_0 \subset \beta$.
%Since $\beta \in \ulTCor (T_2, T_3)$, we have
%\begin{equation}\label{eq1.22b}
%T_2^+ |_{\ol{\beta}^N} + T_3^- |_{\ol{\beta}^N} \geq T_2^- |_{\ol{\beta}^N} + T_3^+ |_{\ol{\beta}^N}.
%\end{equation}
Let $\ol{\gamma}_0$ be the closure of $\gamma_0$ in $\ol{T}_2 \times \ol{T}_3$. Then, $\ol{\gamma}_0$ is not contained in the fibers of $|T_2|$ and of $|T_3|$, since the same holds for $\gamma'$ by Claim \ref{claim1.22}. Since $\beta$ satisfies the modulus condition for $T_2 \otimes T_3^\vee$, so does $\gamma_0$ by Lemma \ref{l-containment}. Therefore, Lemma \ref{general-mod} implies
\begin{equation*}
T_2^+ |_{\ol{\gamma}_0^N} + T_3^- |_{\ol{\gamma}_0^N} \geq T_2^- |_{\ol{\gamma}_0^N} + T_3^+ |_{\ol{\gamma}_0^N}.
\end{equation*}
Noting that we have a natural morphism $\ol{\gamma}^{\prime N} \to \ol{\gamma}_0^N$, we obtain 
\begin{equation}\label{eq1.22b}
T_2^+ |_{\ol{\gamma}^{\prime N}} + T_3^- |_{\ol{\gamma}^{\prime N}} \geq T_2^- |_{\ol{\gamma}^{\prime N}} + T_3^+ |_{\ol{\gamma}^{\prime N}}.
\end{equation}
Combining \eqref{eq1.22a} and \eqref{eq1.22b}, we get
\begin{equation}\label{eq1.22c}
T_1^+ |_{\ol{\gamma}^{\prime N}} + T_3^- |_{\ol{\gamma}^{\prime N}} \geq T_1^- |_{\ol{\gamma}^{\prime N}} + T_3^+ |_{\ol{\gamma}^{\prime N}}.
\end{equation}
Recall that we have shown in \textbf{Step 1} that the natural map $\ol{\gamma}' \to \ol{\gamma}$ is proper surjective. This induces a proper surjective morphism  $\ol{\gamma}^{\prime N} \to \ol{\gamma}^N$. Therefore, by \cite[Lemma 2.2]{Krishna-Park}, the inequality \eqref{eq1.22c} implies
\begin{equation*}
T_1^+ |_{\ol{\gamma}^{N}} + T_3^- |_{\ol{\gamma}^{N}} \geq T_1^- |_{\ol{\gamma}^{\prime N}} + T_3^+ |_{\ol{\gamma}^{N}}.
\end{equation*}
This shows that $\gamma \in \ulTCor(T_1,T_3)$, which finishes the proof of Proposition \ref{p-composition}.
\end{proof}

\begin{definition}
Write $\ulTCor$ for the category whose objects are interiorly smooth modulus triples over $k$ %such that for arbitrary modulus triples $T_1,T_2 \in \ulTCor$, we have
%\[\ulTCor (T_1,T_2) := \left\{\alpha \in \Cor(T_1^{\circ},T_2^{\circ}) \biggm| \parbox{50mm}{The closure $\ol{\alpha} \subset \ol{T}_1 \times \ol{T}_3$ satisfies the modulus condition for $T_1 \otimes T_2^\vee$.}\right\}.\]
and morphisms are as in Definition \ref{d2.1}. The composition  in $\ulTCor$ is well-defined by Proposition \ref{p-composition}.
\end{definition}

There is an obvious forgetful, faithful functor
\begin{equation}\label{eq1.1}
\ulomega:\ulTCor\to \Cor 
\end{equation}
which sends $T$ to $T^\o$ and correspondences to themselves.

\begin{ex}\label{ex2.1} Let $T=(\ol{T},T^+,T^-)\in \ulTCor$, and let $D\subset \ol{T}$ be an effective Cartier divisor. There is a natural morphism in $\ulTCor$
\[(\ol{T},T^++D,T^-+D)\to (\ol{T},T^+,T^-)\]
which is an isomorphism if and only if $|D|\subseteq |T^+|$.
\end{ex}

\begin{ex}\label{ex2.2} Let $T\in \ulTCor$. Write $\ol{T}= \bigcup_i \ol{T}_i$, a union of irreducible components: thus $T^\o = \coprod_i T_i^\o$, where $T_i^\o=T^\o\cap \ol{T}_i$. For each $i$, define a modulus pair $T_i=(\ol{T}_i,T^+\mid_{\ol{T}_i},T^-\mid_{\ol{T}_i})$. Note that the pullback of divisors is defined since  $\ol{T}_i$ is not contained in $|T^+| \cup |T^-|$ for any $i$. Then the canonical map
\[\bigoplus_i T_i\to T,\]
induced by the finite morphism $\sqcup_i \ol{T}_i \to \ol{T}$, is an isomorphism in $\ulTCor$ (in fact, in the subcategory $\ulTCor^\fin$, which will be introduced in \S \ref{subsection-fin}).
\end{ex}

\begin{prop}\label{p2.1}
The category $\ulTCor$ has a symmetric monoidal structure such that the bifunctor
\[
-\otimes- : \ulTCor \times \ulTCor \to \ulTCor
\]
is defined on objects as follows.
For $S,T \in \ulTCor$, the tensor product $S \otimes T$ is given by
\[
S \otimes T := (\ol{S} \times \ol{T}, S^+ \times \ol{T}  + \ol{S} \times T^+ , S^- \times \ol{T}  + \ol{S} \times T^-).
\]
The unit object is given by
\[\mathbb{1} := (\Spec (k),\emptyset,\emptyset).\]
The functor $\ulomega$ of \eqref{eq1.1} is symmetric monoidal.
\end{prop}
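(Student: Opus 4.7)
My plan is to verify the symmetric monoidal structure on $\ulTCor$ by reducing most checks to the corresponding facts in $\Cor$, exploiting the faithfulness of the forgetful functor $\ulomega$ of \eqref{eq1.1}.

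On objects, the equality $(S \otimes T)^\o = S^\o \times T^\o$ holds set-theoretically since $(S \otimes T)^+ = S^+ \times \ol{T} + \ol{S} \times T^+$, so $S \otimes T$ is interiorly smooth whenever $S$ and $T$ are, and the unit $\mathbb{1}$ is trivially such. To define the tensor on morphisms, for elementary $\alpha \in \ulTCor(S, S')$ and $\beta \in \ulTCor(T, T')$, I would take $\alpha \otimes \beta$ to be the external product in $\Cor$, regarded as an element of $\Cor((S \otimes T)^\o, (S' \otimes T')^\o)$ via $(S \otimes T)^\o = S^\o \times T^\o$. Properness over $\ol{S} \times \ol{T}$ of every irreducible component of the closure is immediate from properness of $\ol{\alpha}/\ol{S}$ and $\ol{\beta}/\ol{T}$, so the main obstacle is to check the modulus condition.

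The key observation, obtained by a direct computation of plus and minus divisors, is the canonical isomorphism of modulus triples
\[
(S \otimes T) \otimes (S' \otimes T')^\vee \;\cong\; (S \otimes S'^\vee) \otimes (T \otimes T'^\vee),
\]
arising from swapping the middle two factors of the total space $\ol{S} \times \ol{T} \times \ol{S}' \times \ol{T}'$. Under this identification, the problem reduces to proving: if an integral closed subscheme $\ol{\alpha} \subset \ol{X}$ satisfies the modulus condition for a triple $X$, and similarly $\ol{\beta} \subset \ol{Y}$ for $Y$, then every irreducible component $\ol{\gamma}$ of $\ol{\alpha} \times \ol{\beta}$ satisfies it for $X \otimes Y$.

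For this last assertion I would run a case analysis in the spirit of Proposition \ref{p-composition}. By dimension count, $\ol{\gamma}$ projects dominantly to each of $\ol{\alpha}$ and $\ol{\beta}$, giving canonical morphisms $\ol{\gamma}^N \to \ol{\alpha}^N$ and $\ol{\gamma}^N \to \ol{\beta}^N$. If $\ol{\gamma}$ lies inside $(X \otimes Y)^+$, the modulus condition holds by Lemma \ref{fully-contained2}; otherwise, one verifies Condition \eqref{eqF} and invokes Lemma \ref{general-mod}. The inequalities $X^+|_{\ol{\alpha}^N} \geq X^-|_{\ol{\alpha}^N}$ and $Y^+|_{\ol{\beta}^N} \geq Y^-|_{\ol{\beta}^N}$ then pull back to $\ol{\gamma}^N$ as $(X^+ \times \ol{Y})|_{\ol{\gamma}^N} \geq (X^- \times \ol{Y})|_{\ol{\gamma}^N}$ and $(\ol{X} \times Y^+)|_{\ol{\gamma}^N} \geq (\ol{X} \times Y^-)|_{\ol{\gamma}^N}$; adding them yields exactly $(X \otimes Y)^+|_{\ol{\gamma}^N} \geq (X \otimes Y)^-|_{\ol{\gamma}^N}$, which concludes via Lemma \ref{general-mod}.

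Finally, bifunctoriality (compatibility of $\otimes$ with composition), the associator, unitor, and braiding, as well as the pentagon, triangle, and hexagon coherence axioms are all inherited from $\Cor$: the construction above carries the proposed tensor on $\ulTCor$ onto the external tensor on $\Cor$ on both objects and morphisms, so every required identity follows from faithfulness of $\ulomega$ and the corresponding identity in $\Cor$. In particular, $\ulomega$ is symmetric monoidal by construction, since $\ulomega(S \otimes T) = S^\o \times T^\o$ and $\ulomega(\mathbb{1}) = \Spec k$.
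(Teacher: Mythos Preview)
Your approach is correct and takes a genuinely different route from the paper's. The paper writes $\alpha\otimes\beta = (\alpha\otimes 1)\circ(1\otimes\beta)$ and, invoking Proposition~\ref{p-composition}, reduces to showing that $S\otimes -$ is an endofunctor; the modulus check then only concerns $S^\o\times\alpha$, where the case split is simply whether the image of $\alpha$ lies in $T_2^-$ or not, and in the latter case one adds $S^+|_{\ol{\beta}^N}+S^-|_{\ol{\beta}^N}$ to both sides of the inequality pulled back from $\ol{\alpha}^N$. Your route is more symmetric: you use the identification $(S\otimes T)\otimes(S'\otimes T')^\vee \cong (S\otimes S'^\vee)\otimes(T\otimes T'^\vee)$ and prove a ``product lemma'' for the modulus condition directly on components of $\ol{\alpha}\times\ol{\beta}$. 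This buys a cleaner conceptual statement at the cost of not reusing the composition result; the paper's decomposition is more economical but less self-contained.

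One point in your case analysis should be made explicit. In the ``otherwise'' branch you invoke the inequalities $X^+|_{\ol{\alpha}^N}\geq X^-|_{\ol{\alpha}^N}$ and $Y^+|_{\ol{\beta}^N}\geq Y^-|_{\ol{\beta}^N}$, but Lemma~\ref{general-mod} only supplies these once you know $\ol{\alpha}\not\subset|X^+|$ and $\ol{\beta}\not\subset|Y^+|$; cf.\ Remark~\ref{rmq-position}, where $S'^-|_{\ol{\alpha}}$ need not even be defined. This does follow from your hypothesis $\ol{\gamma}\not\subset|(X\otimes Y)^+|=(|X^+|\times\ol{Y})\cup(\ol{X}\times|Y^+|)$ together with the dominance of $\ol{\gamma}$ over $\ol{\alpha}$ and $\ol{\beta}$, but that implication deserves a sentence. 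With it in place your argument is complete; the coherence paragraph via faithfulness of $\ulomega$ matches the paper's treatment.
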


\begin{proof} It is clear that $\ulomega(S\otimes T)= {}\ulomega(S)\times {}\ulomega(T)$. Let $S_1,S_2,T_1,T_2\in \ulTCor$ and  $\alpha\in \ulTCor(S_1,T_1)$ $\beta\in \ulTCor(S_2,T_2)$ be two modulus correspondences. We must show that $\alpha\otimes \beta\in \Cor(S_1^\o\times S_2^\o,T_1^\o\times T_2^\o)$ lies in $\ulTCor(S_1\otimes S_2,T_1\otimes T_2)$. Since
\[\alpha\otimes \beta = \alpha \otimes 1 \circ 1\otimes \beta\]
it suffices by symmetry to check that $S\otimes -$ is an endofunctor on $\ulTCor$ for any $S \in \ulTCor$. %{\color{red}\Large (To Bruno: You told me that this is enough to check the monoidality. Could you add the discussion in the proof?)}. 

Let $\alpha : T_1 \to T_2$ be a morphism in $\ulTCor$. Since $\alpha \in \Cor(T_1^{\circ},T_2^{\circ})$, it induces a morphism $\beta := S^{\circ} \times \alpha \in \Cor(S^{\circ} \times T_1^{\circ},S^{\circ} \times T_2^{\circ})$. Noting that $(S \otimes T_i)^{\circ} = S^{\circ} \times T_i^{\circ}$, it suffices to check that $\beta \in \ulTCor(S \otimes T_1,S \otimes T_2)$. Without loss of generality, we may assume that $\alpha$ is an integral cycle. In this case, $\beta$ is also integral.
As a first case, we assume that the image of $\alpha$ in $T_2^\o$ is contained in $T_2^-$, which is equivalent to saying that the image of $\beta$ in $S^\o \times T_2^\o$ is contained in $\ol{S} \times T_2^-$. Then, in particular, the image of $\beta$ is contained in $(S\otimes T_2)^-$. Therefore, Lemma \ref{mod-criterion1} implies that $\alpha \in \ulTCor(S\otimes T_1,S\otimes T_2)$. 
Next, we assume that the image of $\alpha$ in $T_2^\o$ is not contained in $T_2^-$. Then, the image of $\beta$ in $S^\o \times T_2^\o$ is not contained in $\ol{S} \times T_2^-$. Therefore, letting $\beta^N$ be the normalization of the closure $\ol{\beta}$ of $\beta$ in $(\ol{S} \times \ol{T}_1) \times (\ol{S} \times \ol{T}_2)$,  all the pullbacks onto $\ol{\beta}$ of the effective Cartier divisors $S^\pm,T_1^{\pm},T_2^{\pm}$ are defined. By the modulus condition on $\alpha$, noting that there exists a dominant morphism $\ol{\beta}^N \to \ol{\alpha}^N$, we obtain
\[
T_1^+ |_{\ol{\beta}^N} + T_2^- |_{\ol{\beta}^N} \geq T_1^- |_{\ol{\beta}^N} + T_2^+ |_{\ol{\beta}^N}.
\]
By adding $S^+ |_{\ol{\beta}^N} +$ and $S^- |_{\ol{\beta}^N} +$ on both sides of the inequality, we have
\[
(S^+ |_{\ol{\beta}^N} + T_1^+ |_{\ol{\beta}^N}) + (S^- |_{\ol{\beta}^N} + T_2^- |_{\ol{\beta}^N}) \geq (S^- |_{\ol{\beta}^N} + T_1^- |_{\ol{\beta}^N}) + (S^+ |_{\ol{\beta}^N} + T_2^+ |_{\ol{\beta}^N}),
\]
which implies that $\beta \in \ulTCor(S\otimes T_1,S \otimes T_2)$. 

Finally, we leave it to the reader to define the associativity and commutativity constraints, and to check that they are given by modulus correspondences. 
%This finishes the proof.
\end{proof}

%\blue{How about (fibre) products?}

%\begin{definition}
%Let $S,T \in \ulTCor$, and let $f : \ol{S} \to \ol{T}$ be a morphism of schemes over $k$. We call $f$ an \emph{admissible morphism} (resp. \emph{minimal morphism}) if the pullbacks $f^\ast T^+$ and $f^\ast T^-$ of effective Cartier divisors $T^+$ and $T^-$ are defined, and we have 
%\[S^+ - S^- \geq f^\ast T^+ - f^\ast T^- \ \ (\text{resp.} \ \ S^+ - S^- = f^\ast T^+ - f^\ast T^-).\]
%\end{definition}

%\begin{prop}
%Let $S,T \in \ulTCor$, and let $f : \ol{S} \to \ol{T}$ be a morphism of schemes over $k$. Then, the graph $\Gamma_f$ of $f$ belongs to $\ulTCor(S,T)$ if and only if $f$ is admissible.
%\end{prop}

\subsection{The category $\protect\ulTSm$}

\begin{definition}
Let $\ulTSm$ be the subcategory of $\ulTCor$ with the same objects such that for any $S,T \in \ulTCor$ the set $\ulTSm(S,T)$ consists of those morphisms of schemes $S^\o \to T^\o$  whose graphs belong to $\ulTCor(S,T)$.
\end{definition}

%We define $\ulWSm$ to be the subcategory of $\ulWCor$ with the same objects such that for any $S,T \in \ulWCor$ the set $\ulWSm(S,T)$ consists of those morphisms of schemes $\ol{S} \to \ol{T}$ whose graphs belong to $\ulWCor(S,T)$.

%The following lemma is immediate from the definitions.

%\begin{lemma}
%The category $\ulWSm$ is a full subcategory of $\ulTSm$ consisting of those objects $T \in \ulTSm$ such that $T^+ = \emptyset$.
%\end{lemma}

\subsection{The categories $\protect\ulTCor^\fin$ and $\protect\ulTSm^\fin$}\label{subsection-fin}

\begin{definition}
Let $\ulTCor^\fin$ be the subcategory of $\ulTCor$ with the same objects such that for any $S,T \in \ulTCor$ we have
\[
\ulTCor^\fin(S,T) := \defset{\alpha \in \ulTCor(S,T)}{50mm}{For any component $V$ of $\alpha$, the closure of $V$ in $\ol{S} \times \ol{T}$ is finite over $\ol{T}$}.
\]

Moreover, we define $\ulTSm^\fin$ to be the subcategory of $\ulTCor$ with the same objects such that for any $S,T \in \ulTCor$ we have
\[
\ulTSm^\fin(S,T) := \defset{f \in \ulTSm(S,T)}{50mm}{$f$ extends to a morphism of schemes $\bar f:\ol{S} \to \ol{T}$.}.
\]

In this case, assuming $\bar S$ normal, the modulus condition may we written
\begin{equation}\label{eq2.1}
f(S-|S^+|)\subseteq |T^-| \text{ or } S^+ + \bar f^\ast T^- \geq S^- +\bar f^\ast T^+. 
\end{equation}

Note that for any $S,T \in \ulTCor$ with $\ol{S}$ normal, we have
\[
\ulTSm^\fin (S,T) = \ulTSm (S,T) \cap \ulTCor^\fin (S,T)
\]
by Zariski's connectedness theorem.
\end{definition}

\begin{rk} We can check that the finiteness condition is stable under composition in the same way as Step 1 of the proof of Proposition \ref{p-composition}. So $\ulTCor^\fin$ and $\ulTSm^\fin$ are indeed subcategories.
\end{rk}

%\begin{lemma}
%The fully faithful functor $\ulWSm \to \ulTSm$ factors through  $\ulTSm^\fin$. 
%\end{lemma}

%\begin{proof}
%Obvious.
%\end{proof}

\begin{definition}\label{def:dommin}
A morphism $f : S \to T$ in $\ulTSm$ is \emph{dominant} if any connected component of $S^\o$ is dominant over a connected component of $T^\o$.
A morphism $f : S \to T$ in $\ulTSm^\fin$ is \emph{minimal} if $f^\ast T^-$ is defined and we have $S^+ = f^\ast T^+$ and $S^- = f^\ast T^-$.
\end{definition}

\begin{remark}
(1) The composition of minimal morphisms  is again minimal by \cite[(21.4.4)]{EGA4}.

(2) If $f$ is a minimal morphism, then we have $S^+ - S^- = f^\ast T^+ - f^\ast T^-$. But the inverse implication does not hold in general.
\end{remark}

\subsection{The localization functor $b:\protect\ulTCor^{\fin} \to \protect\ulTCor$}\label{s2.6}

\begin{definition}
Define a class $\Sigma^\fin$ of morphisms in $\ulTSm^\fin$ by
\[
\Sigma^\fin := \defset{(f : S \to T) \in \ulTSm^\fin}{50mm}{$f$ is minimal, $f : \ol{S} \to \ol{T}$ is proper and $f^\o : S^\o \to T^\o$ is the identity.}.
\]
\end{definition}

\begin{prop}\label{localization1}\
\begin{itemize}
\item[(a)] The class $\Sigma_\fin$ enjoys a calculus of right fractions within $\ulTCor$ and $\ulTSm$.
\item[(b)] The inclusion functor $b:\ulTCor^\fin \to \ulTCor$ induces an equivalence of categories 
\[
\Sigma_\fin^{-1} \ulTCor^\fin \xrightarrow{\sim} \ulTCor.
\]
\item[(c)] The inclusion functor $\ulTSm^\fin \to \ulTSm$ induces an equivalence of categories 
\[
\Sigma_\fin^{-1} \ulTSm^\fin \xrightarrow{\sim} \ulTSm.
\]
\end{itemize}
\end{prop}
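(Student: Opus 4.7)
I would verify the four axioms of Gabriel--Zisman's right calculus of fractions for $\Sigma^\fin$ in $\ulTCor$ (and in $\ulTSm$), and then derive (b) and (c) from (a) by the standard argument that every morphism can be presented as a right fraction with denominator in $\Sigma^\fin$.

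\emph{Identity and stability under composition.} The identity of any $T \in \ulTCor$ is plainly in $\Sigma^\fin$. For the composition $s_2 \circ s_1$ of two elements of $\Sigma^\fin$: properness and identity-on-interior compose trivially, while minimality composes by \cite[(21.4.4)]{EGA4}, as noted after Definition \ref{def:dommin}.

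\emph{Ore condition.} Given $\alpha : U \to T$ in $\ulTCor$ and $s : S \to T$ in $\Sigma^\fin$, I look for $t : V \to U$ in $\Sigma^\fin$ and $\beta : V \to S$ in $\ulTCor$ with $s \circ \beta = \alpha \circ t$. Because $s^\o$ and (the sought) $t^\o$ are identities, this equation on interiors forces $\beta^\o = \alpha^\o$ as cycles in $\Cor(U^\o, T^\o)$, so the problem becomes: construct $t : V \to U$ in $\Sigma^\fin$ such that the cycle underlying $\alpha$ defines an element of $\ulTCor(V, S)$. For each component of $\alpha$ with closure $\ol{Z} \subset \ol{U} \times \ol{T}$, form $\ol{Z}' := \ol{Z} \times_{\ol{T}} \ol{S} \subset \ol{U} \times \ol{S}$; the second projection $\ol{S} \to \ol{T}$ being proper, $\ol{Z}' \to \ol{Z}$ is proper and birational (an iso over the locus where $s$ is iso). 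Assemble these into a single proper modification $\ol{V} \to \ol{U}$, iso over $U^\o$, through which all the relevant closures factor (concretely, by a blow-up of $\ol{U}$ dominating the indeterminacy of the rational maps $\ol{U} \dashrightarrow \ol{Z} \dashrightarrow \ol{S}$, or by taking $\ol{V}$ to be the Stein factor of a dominant component of $\coprod \ol{Z}'$ over $\ol{U}$). Define $V^\pm := t^\ast U^\pm$; these pullbacks are defined since $\ol{V} \to \ol{U}$ is iso over $U^\o$, so no irreducible component of $\ol{V}$ lies in $|U^\pm|$. The modulus condition for the closure of $\alpha$ in $\ol{V} \times \ol{S}$ with respect to $V \otimes S^\vee$ then follows from the known modulus condition on $\ol{U} \times \ol{T}$: minimality gives $S^\pm = s^\ast T^\pm$ and $V^\pm = t^\ast U^\pm$, so pulling back the inequality of Lemma \ref{general-mod} along the natural morphism of normalizations and invoking \cite[Lemma 2.2]{Krishna-Park} yields the desired inequality on $\ol{V} \times \ol{S}$.

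\emph{Cancellation.} If $s \alpha = s \beta$ for $s \in \Sigma^\fin$, then since $s^\o = \id$ and composition in $\Cor$ with an identity preserves cycles, $\alpha^\o = \beta^\o$; but $\alpha, \beta \in \ulTCor(S, T)$ share the same target modulus structure, so $\alpha = \beta$ already. The identity serves as the cancellation witness. The same arguments apply in $\ulTSm$, using that on interiors one gets an equality of scheme morphisms.

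\emph{From (a) to (b) and (c).} It remains to show that every $\alpha \in \ulTCor(S, T)$ admits a presentation $\alpha = \alpha' \circ s^{-1}$ with $s \in \Sigma^\fin$ and $\alpha' \in \ulTCor^\fin$. Given $\alpha$ with closure $\ol{\alpha} \subset \ol{S} \times \ol{T}$ proper over $\ol{S}$, apply Stein factorization $\ol{\alpha} \to \ol{S}' \to \ol{S}$: the second map is finite, and is iso over $S^\o$ because $\alpha \to S^\o$ is already finite. Pull back the modulus structure of $S$ along $\ol{S}' \to \ol{S}$ to obtain $S' \in \ulTCor$ with $s : S' \to S \in \Sigma^\fin$; the same cycle $\alpha$ upgrades to $\alpha' \in \ulTCor^\fin(S', T)$, and $\alpha = \alpha' \circ s^{-1}$ in the localization. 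Standard Gabriel--Zisman formalism (together with (a)) then yields the equivalence of (b). For (c), when $\alpha$ is the graph of a morphism $S^\o \to T^\o$, the construction produces $\alpha' \in \ulTSm^\fin(S', T)$ using Zariski's connectedness theorem, exactly as in the formula $\ulTSm^\fin = \ulTSm \cap \ulTCor^\fin$ stated before Proposition \ref{localization1}.

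\emph{Main obstacle.} The delicate point is the modulus bookkeeping in the Ore step: one must ensure that the chosen modification $\ol{V}$ is compatible simultaneously with all components of $\alpha$ and supports the pulled-back divisors $t^\ast U^\pm$, and that the modulus inequality genuinely transfers from $U \otimes T^\vee$ to $V \otimes S^\vee$. Minimality of both $s$ and $t$ is what makes this transfer go through cleanly via Lemma \ref{general-mod}.
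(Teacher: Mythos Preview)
Your overall architecture (verify the Gabriel--Zisman axioms, then show that every morphism in $\ulTCor$ can be written as a right fraction) is exactly the route the paper (and \cite[Prop.~1.10.4]{motmod}) follows. However, there is a genuine gap in the step where you produce the modification $\ol{S}'\to\ol{S}$.

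\textbf{The Stein factorisation argument for (b) is incorrect.} Given $\alpha\in\ulTCor(S,T)$ with closure $\ol{\alpha}\subset\ol{S}\times\ol{T}$, you Stein-factorise the proper map $\ol{\alpha}\to\ol{S}$ as $\ol{\alpha}\to\ol{S}'\to\ol{S}$ and claim the finite map $\ol{S}'\to\ol{S}$ is an isomorphism over $S^\o$. This fails: over $S^\o$ the map $\alpha\to S^\o$ is already finite, so the Stein factorisation there is the trivial one $\alpha\xrightarrow{\sim}\ol{S}'|_{S^\o}\to S^\o$; in particular $\ol{S}'|_{S^\o}\cong\alpha$, which is finite of degree $\deg\alpha$ over the relevant component of $S^\o$, not an isomorphism. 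Hence $\ol{S}'\to\ol{S}$ is \emph{not} in $\Sigma^\fin$, and the resulting ``$S'$'' is not even a modulus triple with the correct interior.

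\textbf{What actually works.} The paper (via \cite{motmodI}) invokes Raynaud--Gruson flattening \cite[Cor.~5.7.10]{RG}: because $\ol{\alpha}\to\ol{S}$ is proper and is quasi-finite over $S^\o$, there is a blow-up $\ol{S}'\to\ol{S}$ with centre contained in $\ol{S}\setminus S^\o$ (hence an isomorphism over $S^\o$, so genuinely in $\Sigma^\fin$) such that the strict transform of $\ol{\alpha}$ over $\ol{S}'$ is quasi-finite, hence finite. This is the missing ingredient both here and in your Ore-condition step, where the vague ``blow-up dominating the indeterminacy'' / ``Stein factor'' description does not actually force finiteness of the closure in $\ol{V}\times\ol{S}$. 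Once Raynaud--Gruson is in place, your modulus bookkeeping via Lemma~\ref{general-mod} and minimality goes through as you describe.
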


The proof is identical to that of \cite[Proposition 1.10.4]{motmod}, dealing with two moduluses instead of one, and we skip it.

\subsection{Squarable morphisms} Recall:

\begin{definition}[\protect{\cite[IV.1.4.0]{SGA3}}]
A morphism $f : S \to T$ in a category $\sC$ is \emph{squarable} if for any morphism $T' \to T$, the fiber product $S \times_{T} T'$ is representable in $\sC$.
\end{definition}

\begin{prop}\label{quarrable}
Any minimal morphism $f : S \to T$ in $\ulTSm^\fin$, with $f^\o : S^\o \to T^\o$ smooth, is squarable in $\ulTSm^\fin$.
\end{prop}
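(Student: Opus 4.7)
The idea is to take as candidate total space the scheme-theoretic closure
\[
\bar{S}' := \overline{S^\o \times_{T^\o} (T')^\o}^{\,\mathrm{sch}} \subset \bar{S} \times_k \bar{T}',
\]
equipped with the modulus structure $(S')^\pm := p_2^\ast (T')^\pm$, where $p_1 : \bar{S}' \to \bar{S}$ and $p_2 : \bar{S}' \to \bar{T}'$ are the projections induced by $\bar{S}' \hookrightarrow \bar{S} \times_k \bar{T}'$. By construction $\bar{S}' \in \Sch$ is reduced, and $(S')^\o := S^\o \times_{T^\o} (T')^\o$ is an open, schematically dense subscheme. Three checks are required: that $S' := (\bar{S}', (S')^+, (S')^-)$ is an object of $\ulTSm^\fin$; that $p_1$ and $p_2$ define morphisms in $\ulTSm^\fin$; and that the universal property of the fibre product holds.

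For $S'\in \ulTSm^\fin$, minimality of $f$ gives $\bar{f}^{-1}(T^\o) = S^\o$; hence $p_2^{-1}((T')^\o) = S^\o\times_{T^\o}(T')^\o = (S')^\o$, which is smooth over $(T')^\o$ (base change of the smooth morphism $f^\o$) and hence smooth over $k$. To define $p_2^\ast (T')^\pm$ as Cartier divisors I invoke convention~(b) of the excerpt: $\bar{S}'$ is reduced by construction, and no irreducible component of $\bar{S}'$ maps into $|(T')^\pm|$. Indeed, each component of $\bar{S}'$ is the closure of a component $V$ of $(S')^\o$, which is flat over a component of $(T')^\o$ and so has open image there; but $|(T')^+|$ does not meet $(T')^\o$, and $|(T')^-|\cap (T')^\o$ is nowhere dense in any component of $(T')^\o$.

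For the projections: $p_2 : S' \to T'$ is tautologically minimal, so it lies in $\ulTSm^\fin$. For $p_1 : S' \to S$, minimality of $f$ yields $p_1^\ast S^\pm = p_1^\ast f^\ast T^\pm = p_2^\ast g^\ast T^\pm$, and pulling back via $p_2$ the modulus condition of $g : T' \to T$ (that is, $(T')^+ + g^\ast T^- \geq (T')^- + g^\ast T^+$ on the normalisation of $\bar{T}'$) produces the desired inequality for $p_1$ against $S$, by Lemma~\ref{general-mod}. For the universal property, given $a : R \to S$ and $b : R \to T'$ in $\ulTSm^\fin$ with $fa = gb$, the induced morphism $\bar{R} \to \bar{S}\times_k\bar{T}'$ sends $R^\o$ into $(S')^\o$; since $\bar{R}$ is reduced with $R^\o$ dense in it and $\bar{S}'$ is a reduced closed subscheme of $\bar{S}\times_k\bar{T}'$, the morphism factors uniquely through $\bar{S}'$ as some $\bar{c} : \bar{R} \to \bar{S}'$. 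Its modulus condition against $S'$ reduces, via $(S')^\pm = p_2^\ast (T')^\pm$ and $p_2\bar{c} = \bar{b}$, to the modulus condition of $b$ against $T'$.

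The main obstacle is Step~1: the Cartier pullbacks $p_2^\ast (T')^\pm$ need to be well-defined, and yet $\bar{f}$ is only guaranteed to be flat on $S^\o$, so flat base change is unavailable on the naive scheme-theoretic fibre product $\bar{S}\times_{\bar{T}}\bar{T}'$, which may fail to be reduced or may acquire components entirely inside $p_2^{-1}|(T')^\pm|$. Working with the scheme-theoretic closure of $(S')^\o$ bypasses this: reducedness is automatic, and smoothness (hence flatness) of $f^\o$ prevents any component of $\bar{S}'$ from being swallowed by $|(T')^\pm|$ via $p_2$. Once this technical point is secured, the rest of the argument is formal.
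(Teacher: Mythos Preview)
Your construction is correct and takes a genuinely different route from the paper's. The paper sets the total space of the candidate to be the naive fibre product $\ol{S}\times_{\ol{T}}\ol{T}_1$, with modulus structure pulled back from $T_1$ along the second projection; you instead take the scheme-theoretic closure $\bar{S}'$ of $S^\o\times_{T^\o}(T')^\o$ inside $\ol{S}\times_k\ol{T}'$. Your choice is the more robust one. Since $\bar f$ is only known to be flat over $T^\o$, the paper's $\ol{S}\times_{\ol{T}}\ol{T}_1$ can acquire irreducible components lying entirely over $|T_1^+|$ --- e.g.\ for $\ol{T}=\A^2$, $T^+=(x)$, $T^-=0$, $\ol{S}=\Bl_0\A^2$ with $f$ the blow-down (minimal, with $f^\o=\id$), and $g\colon\A^1\hookrightarrow\A^2$ the $x$-axis with $T_1^+=(t)$, the fibre product contains the exceptional curve as a full component, on which $f_1^*T_1^+$ is not Cartier. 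Your closure discards such components by fiat, and your argument that $p_2^*(T')^\pm$ are Cartier (reducedness of $\bar{S}'$, plus each component having generic point in $(S')^\o$ with open image in $(T')^\o$ under the smooth map $p_2|_{(S')^\o}$) is exactly what secures this. The universal-property step is also clean: the admissibility of $c$ matches that of $b$ case by case, since $\bar c^{-1}|(S')^-|=\bar c^{-1}p_2^{-1}|(T')^-|=\bar b^{-1}|(T')^-|$.

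There is one gap, in the admissibility of $p_1$. You write the modulus condition for $g$ as $(T')^++g^*T^-\ge (T')^-+g^*T^+$ on $\ol{T}'^N$ and then ``pull back via $p_2$''. Two issues: that inequality form is only available on components of $\ol{T}'$ not mapped by $g$ into $|T^-|$; and a morphism $\bar{S}'^N\to\ol{T}'^N$ need not exist when $p_2$ fails to be componentwise dominant. The fix, which the paper carries out explicitly for its analogue $g_1$, is to argue componentwise on $\bar{S}'$: if $p_1(V)\subset|S^-|$ apply Lemma~\ref{mod-criterion1}; otherwise $gp_2(V)\not\subset|T^-|$, so the component $W'$ of $\ol{T}'$ containing $\overline{p_2(V)}$ carries the inequality on $(W')^N$, and one transports it to $V^N$ via the factorisation Lemma~\ref{l-containment} (equivalently \cite[Lemma~2.4]{cubeinv}) rather than through a map of normalisations.
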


\begin{proof} It is in the same lines as that of \cite[Prop. 1.10.7]{motmodI}, but we have to elaborate a bit because of the two moduluses.

Let $g : T_1 \to T$ be a morphism in $\ulTSm^\fin$.  Consider the modulus triple 
\[
S_1 := (\ol{S} \times_{\ol{T}} \ol{T}_1 , \ol{S} \times_{\ol{T}} T^+_1 , \ol{S} \times_{\ol{T}} T^-_1  ).
\]
Then, the interior $S_1^\o$ equals $S^\o \times_{T^\o} T_1^\o$, which is smooth since $f^\o : S^\o \to T^\o$ is smooth. Indeed, we have
\[
S_1^\o = \ol{S} \times_{\ol{T}} T_1^\o = (\ol{S} \times_{\ol{T}} T^\o ) \times_{T^\o} T_1^\o = S^\o \times_{T^\o} T_1^\o,
\]
where the last equality holds since $S^+ = f^\ast T^+$ by the minimality of $f$. Therefore, we have $S_1 \in \ulTCor$. Moreover, we obtain the following commutative diagram in $\ulTSm^\fin$:
\[\xymatrix{
S_1 \ar[r]^{g_1} \ar[d]_{f_1} & S \ar[d]^f \\
T_1 \ar[r]^g & T,
}\]
where $f_1$ and $g_1$ are induced by the projections $\ol{S}_1 \to \ol{T}_1$ and $\ol{S}_1 \to \ol{S}$, respectively. Indeed, $f_1$ is minimal by definition of $S_1$. To see the admissibility of $g_1$, we consider two cases. First, assume that the image of $g_1$ is contained in $|S_1^-|$. Then, $g_1$ satisfies the modulus condition. Next, assume that the image of $g_1$ is not contained in $|S^-|$. Since by the minimality of $f$ we have $S^- = f^\ast T^-$, this implies that the image of $f \circ g_1 = g \circ f_1$ is not contained in $T^-$. In particular, the image of $g$ is not contained in $T^-$. Therefore, the admissibility of $g$ implies  
\[
T_1^+ |_{\ol{T}_1^N} - T_1^- |_{\ol{T}_1^N} \geq g^\ast T^+ |_{\ol{T}_1^N} -  g^\ast T^- |_{\ol{T}_1^N},
\]
where $\ol{T}_1^N$ is the normalization of $\ol{T}^1$. Noting that there exists a canonical morphism $\ol{S}_1^N \to \ol{T}_1^N$, we obtain 
\[
T_1^+ |_{\ol{S}_1^N} - T_1^- |_{\ol{S}_1^N} \geq g^\ast T^+ |_{\ol{S}_1^N} - g^\ast T^- |_{\ol{S}_1^N}.
\]
By the minimality of $f_1$ and $f$, we obtain 
\[T_1^\pm |_{\ol{S}_1^N} = S_1^\pm |_{\ol{S}_1^N}, \ \ T^\pm |_{\ol{S}_1^N} = S^\pm |_{\ol{S}_1^N}.\]
Therefore, we have 
\[
S_1^+ |_{\ol{S}_1^N} - S_1^- |_{\ol{S}_1^N} \geq g^\ast S^+ |_{\ol{S}_1^N} - g^\ast S^- |_{\ol{S}_1^N},
\]
which shows that $g_1$ is admissible.

Now, we prove that $S_1$ represents the fiber product $S \times_T T_1$. Give ourselves a commutative diagram in $\ulTSm^\fin$:
\[\xymatrix{
U \ar[r]^{a} \ar[d]_{b} & S \ar[d]^f \\
T_1 \ar[r]^g & T.
}\]
Then, the underlying diagram of schemes induces a unique morphism $c : \ol{U} \to \ol{S} \times_{\ol{T}} \ol{T}_1$ which makes the resulting diagram commute. It suffices to prove that $c$ defines an admissible morphism $U \to S_1$. If the image of $c$ is contained in $S_1^- = \ol{S} \times_{\ol{T}} T^-$, then we are done by Lemma \ref{mod-criterion1}. Assume that the image of $c$ is not contained in $S_1^- = \ol{S} \times_{\ol{T}} T^-$. Then, the image of $b$ is not contained in $T_1^-$. In this case, the admissibility of $b$ implies
\[
U^+ |_{\ol{U}^N} - U^- |_{\ol{U}^N} \geq T_1^+ |_{\ol{U}^N} - T_1^- |_{\ol{U}^N},
\]
where $\ol{U}^N$ is the normalization of $\ol{U}$. Noting that $S_1^\pm |_{\ol{U}^N} = T_1^\pm |_{\ol{U}^N}$, we obtain 
\[
U^+ |_{\ol{U}^N} - U^- |_{\ol{U}^N} \geq S_1^+ |_{\ol{U}^N} - S_1^- |_{\ol{U}^N},
\]
which shows that $c$ is admissible. 
%This finishes the proof.
\end{proof}

\section{Important subcategories}\label{s3}

In this section, we introduce several interesting categories and study their relation to $\ulTCor$.

\subsection{Proper modulus triples}\label{subsection-subcats}

\begin{definition}
Define $\TCor \subset \ulTCor$ to be the full subcategory consisting of those $T \in \ulTCor$ such that $\ol{T}$ is proper over $k$. Let $\tau : \TCor \to \ulTCor$ be the inclusion functor, which is monoidal by a trivial reason. 
\end{definition}

%\blueb{This may have to be corrected. For one thing, $\Comp$ has now disappeared from \cite{motmodI}. Then, the theory of $\Comp$ had to be completed in loc. cit.}

The aim of this subsection is to prove the following result.

\begin{thm}\label{adjoints-tau}
The functor $\tau$ has a pro-left adjoint $\tau^!$,  which is given by the formula 
%\blueh{(Sorry, I am not sure if it is monoidal or not.)}
\[
\tau^! (T) := \proobj{(T \to T_1) \in \Comp(T)} T_1,
\] 
where the definition of $\Comp(T)$ is given below.
%\blueh{Sorry I was missing the statement for $\Psi$. Will it be used to give a formula of $\tau^\ast$? But today I am too tired to understand the proof in \cite{motmod} and to think whether it works in our situation or not (though surely it works). And I will be occupied tomorrow (Sunday), too. Could you fill it if you have time and energy? If not, please motivate me next time :)}
\end{thm}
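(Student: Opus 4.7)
\emph{Proof proposal.} My plan is to verify the universal property of the pro-object $\tau^!(T)$ directly: for every proper $U \in \TCor$, the natural map
\[
\colim_{T_1 \in \Comp(T)} \TCor(T_1, U) \to \ulTCor(T, U)
\]
is a bijection, where the colimit is taken along the appropriate filtered orientation of $\Comp(T)$. This splits into three tasks: show $\Comp(T)$ is essentially small and cofiltered; show every $\alpha \in \ulTCor(T,U)$ factors through some $T_1 \in \Comp(T)$; and show any two such factorizations coincide after passing to a common refinement.

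For cofilteredness, I would combine Nagata's compactification theorem with a divisorial extension. Given $\ol{T}$, choose an open dense immersion $\ol{T} \hookrightarrow \ol{T}_1$ with $\ol{T}_1$ proper, then set $T_1^-$ to be the scheme-theoretic closure of $T^-$ (made Cartier by an auxiliary blow-up of $\ol{T}_1$ if necessary), and let $T_1^+$ be the closure of $T^+$ plus $n \cdot (\ol{T}_1 \setminus \ol{T})_{\red}$ for some $n \geq 0$. Varying $\ol{T}_1$ and $n$ yields a cofiltered system of compactifications in the expected sense: given two compactifications $T_1, T_1'$, a common refinement is constructed from the closure of the graph of the birational map $\ol{T}_1 \dashrightarrow \ol{T}_1'$ (still proper over $k$), endowed with divisorial data dominating both sides.

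For the factorization step, given an elementary $\alpha \in \ulTCor(T,U)$ with interior support $V$, I would take the scheme-theoretic closure $\ol{V}_1$ of $V$ in $\ol{T}_1 \times \ol{U}$ for $T_1 \in \Comp(T)$ chosen sufficiently large; this is automatically proper over $\ol{T}_1$ by properness of $\ol{U}$. To verify the modulus condition for $\ol{V}_1$ relative to $T_1 \otimes U^\vee$, I would separate the irreducible components of $\ol{V}_1$: components dominating $V$ inherit the modulus condition from $\alpha$ via Lemma \ref{l-containment} applied to the natural surjection onto the closure of $V$ in $\ol{T} \times \ol{U}$, while components mapping entirely into the boundary $\ol{T}_1 \setminus \ol{T}$ are handled by Lemma \ref{fully-contained2} once $n$ is taken large enough that they lie inside $|(T_1 \otimes U^\vee)^+|$. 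Injectivity of the colimit map is proved analogously, by comparing two factorizations after pulling back to a common refinement of the two compactifications involved. By Proposition \ref{localization1} one may further reduce several verifications to morphisms in $\ulTSm^{\fin}$, where the modulus condition takes the explicit divisorial form \eqref{eq2.1}.

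The main obstacle I foresee is the delicate handling of the negative divisor $T^-$ under compactification: the scheme-theoretic closure of an effective Cartier divisor in a Nagata compactification need not itself be Cartier, which forces one to intercalate blow-ups before defining $T_1^-$ and then to argue that the resulting refined compactifications still form a cofinal subsystem of $\Comp(T)$. This is the genuinely new subtlety compared with the pro-left adjoint for modulus pairs treated in \cite{motmod}, where only the positive boundary had to be enlarged; here one must simultaneously track two divisors whose interaction, via the blow-up definition of the modulus condition (Definition \ref{def-modcond}), controls the whole construction.
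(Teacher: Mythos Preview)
Your overall architecture matches the paper's: establish that $\Comp(T)$ is cofiltered (this is Proposition \ref{prop-comp1}, proved essentially as you sketch) and then verify the bijection $\colim_{T_1}\TCor(T_1,U)\iso \ulTCor(T,U)$. Injectivity is in fact simpler than you indicate: since $T_1^\o=T^\o$ for every $T_1\in\Comp(T)$, both sides sit inside $\Cor(T^\o,U^\o)$, so no refinement argument is needed.

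Your surjectivity step, however, has a genuine gap. For an elementary $V\in\ulTCor(T,U)$ the closure $\ol{V}_1$ of $V$ in $\ol{T}_1\times\ol{U}$ is \emph{irreducible}, so there are no ``components mapping entirely into the boundary'' for Lemma \ref{fully-contained2} to treat; and Lemma \ref{l-containment} compares two morphisms into the total space of a \emph{single} triple, whereas here you must pass from the modulus condition for $T\otimes U^\vee$ to that for the different triple $T_1\otimes U^\vee$. The paper's actual argument is divisorial. After disposing of the case $V\subset T^\o\times|U^-|$ via Lemma \ref{mod-criterion1}, one invokes Lemma \ref{general-mod} to rewrite the modulus condition as effectivity of the Cartier divisor
\[
\bigl(T_1^+ - T_1^-\bigr)\big|_{\ol{V}_1^N}-\bigl(U^+ - U^-\bigr)\big|_{\ol{V}_1^N}
\]
on the single normal scheme $\ol{V}_1^N$, observes that this divisor is already effective over the open subset $\ol{V}_1^N\times_{\ol{T}_1}\ol{T}$ (because $V\in\ulTCor(T,U)$), and then applies a boundedness lemma (\cite[Lemma 3.12]{cubeinv} or \cite[Lemma B.1]{KM}) to find $n$ such that adding $nD$ to $T_1^+$, where $|D|=\ol{T}_1\setminus\ol{T}$, forces effectivity everywhere. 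It is precisely for this step that one needs the boundary to be the support of an effective Cartier divisor---condition $(\star)$ of Proposition \ref{prop-comp1}---and your proposal does not identify this key mechanism.
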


%{\color{red} I think we should really also prove the second part of \cite[Lemma 1.8.3]{motmod} (or its analogue here).}

%In the following, we prove Theorem \ref{adjoints-tau}.

\begin{definition}\label{comp-1}
For any $T \in \ulTCor$, define $\Comp(T)$ to be the category whose objects are pairs $(T_1,j)$ consisting of $T_1 \in \TCor$ equipped with a dense open immersion $j:\ol{T} \hookrightarrow \ol{T}_1$ such that $T_1^+ = T_0^+ + C$ for some effective Cartier divisors $T_0^+, C$ on $\ol{T}_1$ satisfying $\ol{T}_1 \setminus |C| = j(\ol{T})$ and $j$ induces a minimal morphism $j:T \to T_1$ in $\ulTSm$ (see Definition 
\ref{def:dommin} for the minimality).
For $T_1,T_2 \in \Comp(T)$, the set of morphisms is given by
\[
\Comp(T)(T_1,T_2) := \defset{\alpha \in \TCor(T_1,T_2)}{23mm}{\ $j_{T_2}=\alpha \circ j_{T_1}$}.
\]
Note that $\Comp(T)$ is an essentially small category.
\end{definition}

%{\color{red} I propose to simplify the notation $\Comp_1$ to $\Comp$.} {\color{blue} Isn't it confusing since $\Comp$ is used in a different way in \cite{motmod}? If you are planning to make the same simplification in \cite{motmod}, I completely agree with the change :)}

\begin{prop}\label{prop-comp1}
For any $T \in \ulTCor$, the category $\Comp(T)$ is non-empty, cofiltered and ordered. More precisely, for any $(j_{T_{1}} : T \to T_1) , (j_{T_{2}} : T \to T_2) \in \Comp(T)$, we can always find $(j_{T_{1}} : T \to T_3) \in \Comp(T)$ which dominates these objects, such that the morphisms $T_3 \to T_1$ and $T_3 \to T_2$ come from $\ulTSm^\fin $ and the following condition holds: 

$\mathrm{(\star)}$ $\ol{T}_3 \setminus j_{T_3}(\ol{T})$ is the support of an effective Cartier divisor on $\ol{T}_3$.
\end{prop}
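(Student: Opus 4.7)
The three claims run in parallel to the corresponding facts for modulus pairs treated in \cite{motmod}; the only new point is the simultaneous treatment of two divisors $T^+$ and $T^-$. I would organise the argument in three parts.

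\emph{Non-emptiness.} I would first apply Nagata's compactification theorem to embed $\ol{T}$ as a dense open in a proper $k$-scheme $\ol{T}'$. A standard application of Raynaud--Gruson (or a blow-up of the reduced complement) then provides a modification, isomorphic to $\ol{T}'$ over $\ol{T}$, whose boundary is the support of an effective Cartier divisor $C$. Successively blowing up the scheme-theoretic closures of $T^+$ and $T^-$ produces $\ol{T}_1$ equipped with effective Cartier divisors $T_0^\pm$ extending $T^\pm$; since $T^\pm$ are already Cartier on $\ol{T}$, each such blow-up is an isomorphism over $\ol{T}$ and preserves the effective Cartier nature of the boundary (still denoted $C$). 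The triple $T_1 := (\ol{T}_1, T_0^+ + C, T_0^-)$ then lies in $\Comp(T)$, and $j:T\to T_1$ is minimal by construction.

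\emph{Uniqueness of morphisms.} For any $\alpha,\beta \in \Comp(T)(T_1,T_2)$, the equalities $j_{T_2}=\alpha\circ j_{T_1}=\beta\circ j_{T_1}$ combined with the fact that each $j_{T_i}$ induces the identity on $T^\o$ force $\alpha$ and $\beta$ to equal the identity correspondence $\Delta_{T^\o}$ when viewed in $\Cor(T^\o,T^\o)$. An elementary modulus correspondence in $\TCor(T_1,T_2)$ is uniquely determined by its interior, as it is the closure in $\ol{T}_1\times \ol{T}_2$ of the underlying cycle on $T^\o\times T^\o$; hence $\alpha=\beta$.

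\emph{Cofilteredness and property $(\star)$.} Given $T_1,T_2\in \Comp(T)$, I would start from $\ol{T}_3^{(0)}$, the scheme-theoretic closure of the graph of $(j_{T_1},j_{T_2}):\ol{T}\hookrightarrow \ol{T}_1\times \ol{T}_2$. It is proper over $k$, contains $\ol{T}$ as a dense open, and admits proper birational projections $\bar f_i^{(0)}:\ol{T}_3^{(0)}\to \ol{T}_i$. Applying the blow-up machinery of the first step to $\ol{T}_3^{(0)}$, I would obtain $\ol{T}_3$ on which the boundary $\ol{T}_3\setminus \ol{T}$ is the support of an effective Cartier divisor $C_3$ (yielding $(\star)$) and on which both $\sup(\bar f_1^{*}T_1^+,\bar f_2^{*}T_2^+)$ and $\inf(\bar f_1^{*}T_1^-,\bar f_2^{*}T_2^-)$ become effective Cartier divisors $T_3^+$ and $T_3^-$. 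Setting $T_3 := (\ol{T}_3, T_3^+, T_3^-)$, the minimality of $j_{T_3}$ is immediate from the interior identities $\sup(T^+,T^+)=T^+$ and $\inf(T^-,T^-)=T^-$, while admissibility of each projection $\bar f_i$ as a morphism in $\ulTSm^\fin$ follows formally: $T_3^+\geq \bar f_i^{*}T_i^+$ and $\bar f_i^{*}T_i^-\geq T_3^-$ yield the required modulus inequality
\[T_3^+ + \bar f_i^{*}T_i^- \;\geq\; T_3^- + \bar f_i^{*}T_i^+.\]
The main obstacle is to choreograph the successive blow-ups so that the closures of $T^+$, $T^-$ together with the sup/inf divisors become Cartier simultaneously while preserving the boundary structure; this is tractable because every modification involved is an isomorphism over $\ol{T}$, confining all the work to the boundary.
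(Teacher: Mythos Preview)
Your treatment of non-emptiness and of the ordered property matches the paper's. The substantive difference is in the cofilteredness step.

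The paper does \emph{not} use your symmetric $\sup/\inf$ construction. After forming the graph $\Gamma$ of the birational map $\ol{T}_1\dashrightarrow\ol{T}_2$ and blowing up its reduced boundary $Z=(\Gamma\setminus\ol{T})_\red$ to obtain $\ol{T}_3$ with exceptional divisor $E$, it takes the \emph{asymmetric} triple
\[
T_3=(\ol{T}_3,\ p^*T_1^+ + nE,\ p^*T_1^-)
\]
for a sufficiently large integer $n$, where $p:\ol{T}_3\to\ol{T}_1$ is the first projection. Both entries are manifestly Cartier (pullback plus exceptional), so no further modification is needed. Admissibility of $p:T_3\to T_1$ is then automatic, and admissibility of the second projection $q:T_3\to T_2$ reduces to
\[
nE\ \ge\ q^*(T_2^+-T_2^-)-p^*(T_1^+-T_1^-),
\]
whose right-hand side vanishes on $\ol{T}$ by minimality of $j_{T_1},j_{T_2}$ and is therefore supported in $|E|$; a boundedness lemma (\cite[Lemma 3.16]{cubeinv}) then supplies the required $n$.

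Your symmetric route can be made to work, but the phrase ``applying the blow-up machinery of the first step'' is not accurate: the first step turns \emph{closures} of $T^\pm$ into Cartier divisors, whereas here you must turn $\sup$ and $\inf$ of already-Cartier divisors into Cartier divisors. What you actually need is to blow up the two schematic intersections $\bar f_1^*T_1^+\times_{\ol{T}_3^{(0)}}\bar f_2^*T_2^+$ and $\bar f_1^*T_1^-\times_{\ol{T}_3^{(0)}}\bar f_2^*T_2^-$ and invoke Lemma~\ref{key-lem} to identify the exceptional divisors with the desired $\inf$'s (whence also the $\sup$'s). Since on $\ol{T}$ the two divisors in each pair coincide by minimality, these centers are already Cartier there and the blow-ups are isomorphisms over $\ol{T}$, so your boundary control survives. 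This is correct but costs two extra blow-ups and a careful tracking of $(\star)$ and of the decomposition $T_3^+=T_0^++C$; the paper's ``$+nE$'' trick sidesteps all of it and gives an explicit $T_3$ in one stroke.
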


\begin{proof} 
 It is similar to  that of \cite[Lemma 1.8.2]{motmodI}, but a bit more involved.
We first prove that $\Comp(T)$ is non-empty and ordered. Take any dense open immersion $j_0 : \ol{T} \to \ol{T}_0$  with $\ol{T}_0$ proper over $k$.
 Let  $T_0^\pm$ be the scheme-theoretic closure of $T^\pm$. Consider the succession of blow-ups 
\[\pi : T_1 := \ol{T}''' \xrightarrow{\pi_C} \ol{T}''  \xrightarrow{\pi_-} \ol{T}' \xrightarrow{\pi_+} \ol{T}_0,\]
where $\pi_+$ is the blow-up at $T_0^+$, $\pi_-$ is the blow-up at $\pi_+^{-1}(T_0^-)$, and $\pi_C$ is the blow-up at $(\pi_+ \circ \pi_0)^{-1}(\ol{T}_0 \setminus j_0(\ol{T}))$.
Then, $\pi$ is an isomorphism over $\ol{T}$, therefore the map $j_0$ lifts to a dense open immersion $j_1 : \ol{T} \to \ol{T}_1$. 
By construction, $T_1^\pm := \pi^\ast T_0^\pm$ are effective Cartier divisors on $\ol{T}_1$. 
Moreover, $C:=\pi^{-1}(\ol{T}_0 \setminus j_0(\ol{T}))$ is an effective Cartier divisor contained in $T_1^+$.
Set $T_1 := (\ol{T}_1,T_1^+,T_1^-)$. Then, $j_1$ defines a minimal morphism $T \to T_1$ which induces the identity on the interiors. Thus, we obtained an object $(T \to T_1)$ in $\Comp(T)$.
The fact that all morphisms in $\Comp(T)$ are the identity on the interiors implies that $\Comp(T)$ is ordered.

We prove the rest of the assertions. Take any $(j_{T_{1}} : T \to T_1) , (j_{T_{2}} : T \to T_2) \in \Comp(T)$. Then, the total spaces $\ol{T}_1$ and $\ol{T}_2$ are birational and have $\ol{T}$ as a common open dense subset. Let $\Gamma$ be the graph of the birational map $\ol{T}_1 \dashrightarrow \ol{T}_2$. Note that $j_{T_1}$ and $j_{T_2}$ both lift to a common open dense immersion $j : \ol{T} \to \Gamma$. Set $Z:=(\Gamma \setminus j(\ol{T}))_\red$, and consider the blow up $T_3 := \pi : \Bl_{Z}(\Gamma) \to \Gamma$ of $\Gamma$ along $Z$. Then, $j$ lifts to an open dense immersion $j_{T_3} : \ol{T} \to \ol{T}_3$. Let $p:\ol{T}_3 \to \ol{T}_1$ be the natural projection, and let $E$ be the exceptional divisor, i.e., $E=\pi^{-1} (Z)$. Fix a positive integer $n$ and set
\[
T_3 := (\ol{T}_3 , p^\ast T_1^+ + nE, p^\ast T_1^-).
\]
Then we have $\ol{T}_3 \setminus j_{T_3} (\ol{T}) = p^{-1} (\ol{T}_1 \setminus j_{T_1}(\ol{T}))$, which is the support of an effective Cartier divisor.
Since $j_{T_3}^\ast (p^\ast T_1^+ + nE) = T^+$ and $j_{T_3}^\ast (p^\ast T_1^-) = T^-$ by the minimality of $j_{T_1}$, we have $(T \to T_3) \in \Comp(T)$. It is obvious that the projection $p$ gives a morphism $(T \to T_3) \to (T \to T_1)$ in $\Comp(T)$. It remains to prove that there exists a positive integer $n_0$ such that for any $n \geq n_0$ the projection $q : \ol{T} \to \ol{T}_2$ also defines a morphism $(T \to T_3) \to (T \to T_2)$ in $\Comp(T)$. The projection $q$ defines an admissible morphism $T_3 \to T_2$ if 
\[
(p^\ast T_1^+ + nE) - p^\ast T_1^- \geq q^\ast T_2^+ - q^\ast T_2^-,
\]
which is equivalent to that 
\[
nE \geq -p^\ast (T_1^+ - T_1^-) + q^\ast (T_2^+ - T_2^-).
\]
The right hand side becomes zero on the open subset $\ol{T}$ by the minimality of $j_{T_1}$ and $j_{T_1}$. Therefore, the support of the right hand side is contained in $|E|$. Therefore, applying \cite[Lemma 3.16]{cubeinv}, %(or \cite[Lemma B.1]{KM}), 
we can find the desired $n_0$. 
%This finishes the proof.
\end{proof}

Now we are ready to prove Theorem \ref{adjoints-tau}.

\begin{proof}[Proof of Theorem \ref{adjoints-tau}]
For any $T \in \ulTCor$, define a pro-object $\tau^! (T) \in \pro{}\TCor$ by 
\[
\tau^! (T) := \proobj{(T \to T_1) \in \Comp(T)} T_1.
\]
Then, we have an evident morphism $T \to \tau \tau^! (T)$ in $\pro{}\ulTCor$, where we regard $T$ as a constant pro-object. It suffices to prove that for any $S \in \TCor$, the induced map of abelian groups 
\[
\varinjlim_{(T \to T_1)\in \Comp_1(T)} \TCor (T_1,S) = \pro{}\TCor (\tau^! (T),S) \to \ulTCor (T,S).
\]
is an isomorphism. The injectivity is obvious since both sides are subgroups of $\Cor(T^\o,S^\o)$. To prove the surjectivity, take any integral cycle $V \in \ulTCor (T,S)$. If $V$ is contained in the fiber of $S^-$, then $V \in \TCor (T_1,S)$ for any $(T \to T_1)\in \Comp(T)$ and we are done. We assume that $V$ is not contained in the fiber of $S^-$.
Let $(T \to T_1) \in \Comp(T)$ be an object which satisfies the condition $(\ast)$ in Proposition \ref{prop-comp1}. Let $\ol{V} \subset \ol{T}_1 \times \ol{S}$ and let $\ol{V}^N$ be its closure. Since $V \in \ulTCor (T,S)$, the Cartier divisor 
\[
(T_1^+ |_{\ol{V}^N} - T_1^- |_{\ol{V}^N} ) - (S^+ |_{\ol{V}^N} - S^- |_{\ol{V}^N})
\]
is effective on the open subset $\ol{V}^N \times_{\ol{T}_1} \ol{T} \subset \ol{V}^N$. Therefore, by \cite[Lemma 3.12]{cubeinv} (or \cite[Lemma B.1]{KM}), we can find a positive integer $n$ such that 
\[
(T_1^+ |_{\ol{V}^N} - T_1^- |_{\ol{V}^N} ) - (S^+ |_{\ol{V}^N} - S^- |_{\ol{V}^N}) + nD \geq 0,
\]
where $D$ is an effective Cartier divisor on $\ol{T}_1$ such that $|D|=\ol{T}_1 \setminus \ol{T}$. Define $T_2 := (\ol{T}_1 , T_1^+ + nD , T_1^-)$. Then, the open immersion $\ol{T} \to \ol{T}_1$ defines a minimal morphism $T \to T_2$ and we have $(T \to T_2) \in \Comp(T)$. Moreover, $V \in \TCor(T_2,S)$. 
%This finishes the proof.
\end{proof}

\subsection{Disjoint modulus triples}

The main aim of this subsection is to study the properties of the following interesting category:

\begin{definition}\label{d3.5}
We write $\ulTphiCor$ for the full category of $\ulTCor$ consisting of interiorly smooth modulus triples $T=(\ol{T},T^+,T^-)$ such that $T^+$ and $T^-$ are disjoint. 
Moreover, we set $\TphiCor := \ulTphiCor \cap \TCor$.
%Note that such modulus triples are nicely crossing.
\end{definition}

\begin{remark}\label{r3.1}
The most prominent example of an object in $\ulTphiCor$ would like to be the modulus triple $\Bl(T)$ associated to a modulus triple $T$ in Definition \ref{d2.2}, cf. Lemma \ref{l2.1}. Unfortunately, it is not necessarily an object of $\ulTphiCor$ since its interior might not be smooth even if $T \in \ulTCor$ and $\ol{T}$ is smooth. Here is an example:

Take $T = ( \ol{T} , T^+ , T^- ) $
with
\begin{itemize}
\item $\ol{T} = \A^2=\Spec k[x,y]$
\item $T^+ =$ Cartier divisor $(y=0)$
\item $T^- =$ Cartier divisor $(x^2=0)$.
\end{itemize}

Then the total space of $\Bl(T)$ is $\Bl_{(x^2,y)}(\A^2)$, which has a singular  point outside the strict transform of $T^+$ (compare \cite[pp. 179--180]{eis-har}).
Hence $\Bl(T)^\o$ is singular.

%However, if we have resolution of singularities, then we may consider a ``pro-modification'' of $\Bl(T)$ (see \textbf{Step 1} of the proof of 
 This is corrected by Theorem \ref{pro-separation} below, which is the main result of this subsection.
\end{remark}

\begin{thm}\label{pro-separation}
Assume that $k$ admits resolution of singularities. 
Then, the full embedding $\ul{t} : \ulTphiCor \to \ulTCor$ admits a pro-left adjoint $\ul{s}$.
\end{thm}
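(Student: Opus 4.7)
The plan is to realize $\ul{s}$ via a pro-object indexed by a category $\Sep(T)$ of ``smooth separations'' of each $T \in \ulTCor$. An object of $\Sep(T)$ consists of a proper birational morphism $\rho : \ol{T}' \to \Bl_{F_T}(\ol{T})$ such that the resulting triple
\[T' := (\ol{T}',\, \rho^*\tilde{T}^+,\, \rho^*\tilde{T}^-)\]
lies in $\ulTphiCor$. Disjointness of the two divisors on $T'$ is automatic from Lemma \ref{l2.1} (pullback preserves disjointness of supports); the substantive condition is that $T^{\prime\o} = \rho^{-1}(\Bl(T)^\o)$ be smooth. Morphisms in $\Sep(T)$ are proper birational $\ol{T}_1' \to \ol{T}_2'$ over $\Bl_{F_T}(\ol{T})$, and I would set $\ul{s}(T) := \proobj{T' \in \Sep(T)} T'$.

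I would first verify that $\Sep(T)$ is non-empty and cofiltered. Non-emptiness follows from strong resolution of singularities of $\Bl_{F_T}(\ol{T})$, chosen to be an isomorphism over the smooth locus: since $T^\o$ sits as a smooth open subscheme of $\Bl_{F_T}(\ol{T})$ via $\pi_T$, the corresponding open subset of $\ol{T}'$ is automatically smooth. For cofilteredness, given $T_1', T_2' \in \Sep(T)$, take the closure of the diagonal $T^\o$ inside the fiber product $\ol{T}_1' \times_{\Bl_{F_T}(\ol{T})} \ol{T}_2'$ and resolve singularities once more to obtain a common refinement in $\Sep(T)$.

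Next, I would construct the unit $\eta_{T'} : T \to T'$ in $\ulTCor$ for each $T' \in \Sep(T)$: it is represented by the graph of the inverse of the isomorphism $T^{\prime\o} \iso T^\o$, whose closure in $\ol{T} \times \ol{T}'$ is the graph $\Gamma_\pi$ of $\pi := \pi_T \circ \rho$, canonically isomorphic to $\ol{T}'$. Properness of $\Gamma_\pi \to \ol{T}$ is immediate from the properness of $\pi$, and by Lemma \ref{general-mod} the modulus condition reduces to $\pi^*T^+ + T^{\prime-} \geq \pi^*T^- + T^{\prime+}$, which is in fact an equality by the defining relation $T^{\prime\pm} = \pi^*T^\pm - \rho^*E_T$.

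Finally, I would establish the adjunction bijection
\[
\colim_{T' \in \Sep(T)} \ulTphiCor(T', S) \iso \ulTCor(T, S), \qquad S \in \ulTphiCor,
\]
induced by post-composition with $\eta_{T'}$. For surjectivity, given an elementary $V \in \ulTCor(T, S)$, I would argue that the same cycle $V \subset T^\o \times S^\o \cong T^{\prime\o} \times S^\o$ already defines an element of $\ulTphiCor(T', S)$ for every $T' \in \Sep(T)$: using $T^{\prime+} - T^{\prime-} = \pi^*(T^+ - T^-)$, the required modulus inequality on the normalization $\ol{V}^{\prime N}$ of the closure in $\ol{T}' \times \ol{S}$ becomes precisely the pullback of the original inequality on $\ol{V}^N$, and properness of the closure over $\ol{T}'$ follows from the properness of the closure over $\ol{T}$ together with the properness of $\pi$. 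Injectivity is automatic, as any two representatives of the same $\phi$ pull back to the same underlying cycle over a common refinement in $\Sep(T)$. The main obstacle is the edge case where the image of $V$ lies in $|T^-|$, so that $T^-|_{\ol{V}^N}$ is undefined and Lemma \ref{general-mod} does not apply; this is handled directly by Lemma \ref{mod-criterion1}, which shows that $V$ is automatically admissible for both $T \otimes S^\vee$ and $T' \otimes S^\vee$ in this situation.
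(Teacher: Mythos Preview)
Your approach has a substantive gap: the assertion ``the isomorphism $T^{\prime\o} \iso T^\o$'' is false in general. One only has a dense open immersion $T^\o \hookrightarrow T^{\prime\o} = \rho^{-1}(\Bl(T)^\o)$, and already $\Bl(T)^\o$ can be strictly larger than $\pi_T^{-1}(T^\o)$. For instance take $T^+ = T^-$: then $F_T = T^+$ is Cartier, $\pi_T$ is the identity, $\tilde{T}^+ = 0$, and $\Bl(T)^\o = \ol{T}$, whereas $T^\o = \ol{T}\setminus |T^+|$.

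This breaks your surjectivity argument. Given an elementary $V \in \ulTCor(T,S)$, the cycle $V \subset T^\o \times S^\o$ is not closed in the larger $T^{\prime\o} \times S^\o$; one must pass to its closure $V'$ there and prove that $V'$ is \emph{finite} over $T^{\prime\o}$. The key inclusion $\ol{V}' \cap (\ol{T}' \times |S^+|) \subset |T^{\prime+}| \times \ol{S}$ (using $S^+ \cap S^- = \emptyset$; cf.\ Lemma~\ref{claim-4.8.1}~(ii)) gives $V' = \ol{V}' \cap (T^{\prime\o} \times \ol{S})$, but this yields finiteness of $V'$ over $T^{\prime\o}$ only once $\ol{V}'$ is known to be \emph{finite}, not merely proper, over $\ol{T}'$ --- and your properness argument does not provide this. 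The paper sidesteps the issue by first applying the pro-left adjoint $b^!$ of the localisation $b:\ulTCor^\fin \to \ulTCor$ (Proposition~\ref{localization1}) to reduce to $\ulTCor^\fin$, and then constructing an honest (not pro-) left adjoint $\ul{s}^\fin$ on the cofinal subcategory $\ulTCor^{**,\fin}$ (Lemma~\ref{l3.4}), where $T' = (\ol{T}, T^+ - F_T, T^- - F_T)$ keeps the \emph{same} total space as $T$, so that finiteness of $\ol{V}$ over $\ol{T}$ is inherited for free.
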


Before starting the proof of Theorem \ref{pro-separation}, we prepare some definitions and lemmas. 
%{\color{red} We fix a $T \in \ulTCor$.}

\begin{defn} Define two full subcategories $\ulTCor^*$ and $\ulTCor^{**}$ of $\ulTCor$ by
\begin{align*}
\ulTCor^*&=\{T\in \ulTCor\mid  F_T=T^+\times_{\ol{T}} T^- \text{ is an effective Cartier divisor}\}\\
\ulTCor^{**}&=\{T\in \ulTCor^*\mid  \ol{T} \setminus |T^+-F_{T}| \text{ is smooth over } k.\}
\end{align*}
\end{defn}

\begin{lemma}\label{l3.2} For any $T_0\in\ulTCor$, there is a morphism $p:T\to T_0$ in $\Sigma^\fin$ with $T\in \ulTCor^*$. (See Definition \ref{localization1} for $\Sigma^\fin$.) In particular, the full embedding $\ulTCor^*\inj \ulTCor$ is an equivalence of categories.
\end{lemma}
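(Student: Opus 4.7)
The natural candidate for $T$ is the pullback of $T_0$ to the blow-up of $\ol{T}_0$ along the fundamental locus $F_{T_0}$. More precisely, let $\pi : \ol{T} := \Bl_{F_{T_0}}(\ol{T}_0) \to \ol{T}_0$ be this blow-up. Since each irreducible component of $\ol{T}_0$ has generic point outside $|T_0^+| \cup |T_0^-|$, the same is true of the corresponding component of $\ol{T}$, and the pullbacks $T^+ := \pi^\ast T_0^+$ and $T^- := \pi^\ast T_0^-$ are defined as effective Cartier divisors. Set $T := (\ol{T}, T^+, T^-)$ and let $p : T \to T_0$ be the morphism induced by $\pi$.

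The first thing to check is that $T \in \ulTCor^*$, i.e. that $F_T = T^+ \times_{\ol{T}} T^-$ is an effective Cartier divisor. Let $E := \pi^{-1}(F_{T_0})$ be the exceptional divisor of $\pi$. In the notation of Definition \ref{d2.2} applied to $T_0$, we have $T^+ = \widetilde{T_0}^+ + E$ and $T^- = \widetilde{T_0}^- + E$; Lemma \ref{l2.1} gives $|\widetilde{T_0}^+| \cap |\widetilde{T_0}^-| = \emptyset$, i.e. $|T^+ - E| \cap |T^- - E| = \emptyset$, so Lemma \ref{key-lem} yields $F_T = E$. Hence $F_T$ is an effective Cartier divisor. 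Next, $T$ is interiorly smooth: since $F_{T_0} \subseteq |T_0^+|$, the map $\pi$ is an isomorphism over $T_0^\o = \ol{T}_0 \setminus |T_0^+|$, and $\ol{T} \setminus |T^+| = \pi^{-1}(T_0^\o) \xrightarrow{\sim} T_0^\o$, which is smooth by assumption. Thus $T \in \ulTCor^* \subset \ulTCor$.

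It remains to verify that $p \in \Sigma^\fin$. By construction $p$ is minimal ($T^+ = p^\ast T_0^+$, $T^- = p^\ast T_0^-$), the underlying morphism $\pi$ of total spaces is proper (blow-ups are proper), and $p^\o : T^\o \to T_0^\o$ is the identity under the identification above. Admissibility of $p$ as a morphism of modulus triples in $\ulTSm^\fin$ is immediate from minimality, so $p \in \Sigma^\fin$. For the ``in particular'' statement, Proposition \ref{localization1}(b) says that every morphism in $\Sigma^\fin$ becomes an isomorphism in $\ulTCor$; hence every $T_0 \in \ulTCor$ is isomorphic (via $p$) to an object of $\ulTCor^*$. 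Combined with the fact that $\ulTCor^* \hookrightarrow \ulTCor$ is fully faithful by definition, this shows that the inclusion is an equivalence.

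The only non-formal step is the identification $F_T = E$, and this is already done for us by combining Lemma \ref{l2.1} with Lemma \ref{key-lem} applied on the blow-up; the rest of the argument is bookkeeping about pullbacks and the definition of $\Sigma^\fin$.
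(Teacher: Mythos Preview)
Your proof is correct and follows the same approach as the paper: blow up $\ol{T}_0$ along $F_{T_0}$, pull back the two divisors, and observe that the resulting morphism lies in $\Sigma^\fin$. The only (cosmetic) difference is that the paper identifies $F_T$ with the exceptional divisor in one line via $F_T = T^+\times_{\ol{T}} T^- = \pi^{-1}(T_0^+)\times_{\ol{T}}\pi^{-1}(T_0^-) = \pi^{-1}(F_{T_0}) = E$ (pullback of closed subschemes commutes with scheme-theoretic intersection), whereas you reach the same conclusion through Lemmas~\ref{l2.1} and~\ref{key-lem}; both arguments are valid.
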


\begin{proof} Let $\ol{T}=\Bl_{F_{T_0}}(\ol{T_0})$ and let $p:\ol{T}\to \ol{T_0}$ be the projection. Then $p$ induces a morphism
\[T:=(\ol{T},p^*T_0^+,p^*T_0^-)\to T_0\]
and $F_T=p^*F_{T_0}$ is an effective Cartier divisor. This morphism is clearly in $\Sigma^\fin$. The last statement is obvious, since morphisms of $\Sigma^\fin$ are invertible in $\ulTCor$.
\end{proof}

\begin{lemma}\label{l3.3} Let $T\in \ulTCor^*$. Under resolution of singularities, there exists a morphism $\pi:T_\pi\to T$ in $\Sigma^\fin$ with $T_\pi\in \ulTCor^{**}$. In particular, the full embedding $\ulTCor^{**}\inj \ulTCor$ is an equivalence of categories.
\end{lemma}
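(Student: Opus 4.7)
The plan is to apply resolution of singularities directly to the total space $\ol{T}$, exploiting the fact that the defining condition of $\ulTCor^*$ --- that $F_T$ be an effective Cartier divisor --- is stable under arbitrary birational pullback. More precisely: if $T\in \ulTCor^*$ and $\pi:\ol{T}_\pi\to \ol{T}$ is any proper birational morphism with $\ol{T}_\pi$ reduced and whose irreducible components dominate those of $\ol{T}$, then the pullbacks $T_\pi^\pm:=\pi^*T^\pm$ are defined, and the identity of ideal sheaves
\[
I_{F_{T_\pi}}=I_{T_\pi^+}+I_{T_\pi^-}=\pi^{-1}(I_{T^+}+I_{T^-})\cdot \mathcal{O}_{\ol{T}_\pi}=\pi^{-1}(I_{F_T})\cdot \mathcal{O}_{\ol{T}_\pi}=I_{\pi^*F_T}
\]
shows that $F_{T_\pi}=\pi^*F_T$ remains Cartier, so that $T_\pi\in \ulTCor^*$.

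Given this stability, I would invoke resolution of singularities to produce a proper birational morphism $\pi:\ol{T}_\pi\to \ol{T}$ such that $\ol{T}_\pi$ is smooth over $k$ and $\pi$ is an isomorphism over the smooth locus of $\ol{T}$, hence in particular over $T^\o$. Setting $T_\pi:=(\ol{T}_\pi,\pi^*T^+,\pi^*T^-)$, the paragraph above yields $T_\pi\in \ulTCor^*$, while the additional condition for $T_\pi\in \ulTCor^{**}$ comes for free, since $\ol{T}_\pi\setminus |T_\pi^+-F_{T_\pi}|$ is open in the smooth scheme $\ol{T}_\pi$, hence smooth.

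It remains to check that $\pi$ defines a morphism $T_\pi\to T$ lying in $\Sigma^\fin$: it is minimal by construction ($T_\pi^\pm=\pi^*T^\pm$), the underlying map $\ol{T}_\pi\to \ol{T}$ is proper, and the induced morphism on interiors is an isomorphism since $\pi^{-1}(T^\o)=\ol{T}_\pi\setminus \pi^{-1}(|T^+|)=T_\pi^\o$ and $\pi$ is an isomorphism over $T^\o$; admissibility at the level of $\ulTCor$ follows from Lemma \ref{general-mod} applied to the graph $\Gamma_\pi\simeq \ol{T}_\pi$. The ``in particular'' statement about equivalence of categories is then formal: combining this construction with Lemma \ref{l3.2}, every $T\in \ulTCor$ admits a zigzag of $\Sigma^\fin$-morphisms connecting it to some object of $\ulTCor^{**}$, and since morphisms in $\Sigma^\fin$ are invertible in $\ulTCor$ and $\ulTCor^{**}\inj \ulTCor$ is fully faithful as a full subcategory inclusion, this yields essential surjectivity and hence an equivalence.

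I expect the main obstacle to be conceptual rather than computational --- namely, the observation that $\ulTCor^*$ is preserved under birational pullback, so that one does \emph{not} need to iterate between resolving singularities and re-blowing-up the fundamental locus. Once this stability is noticed, resolution of singularities does all the heavy lifting in a single step. The only minor point requiring care is to choose a resolution $\pi$ which is an isomorphism over the smooth locus of $\ol{T}$, so that $\pi^\o$ is literally the identity on $T^\o$; this is a standard property of Hironaka-style resolutions.
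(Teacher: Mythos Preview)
Your proof is correct and follows the same approach as the paper's one-line argument (``Simply resolve the singularities of $\ol{T}$, and pull back''), with the details spelled out. In particular, your observation that $F_{T_\pi}=\pi^*F_T$ --- equivalently, that fiber products of closed subschemes commute with base change --- is precisely what makes the pullback stay in $\ulTCor^*$, and this is implicit in the paper's proof.
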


\begin{proof} Simply resolve the singularities of $\ol{T}$, and pull back.
\end{proof}

Let $\ulTCor^{**,\fin}=\ulTCor^{**}\cap \ulTCor^\fin$ (a full subcategory of $\ulTCor^\fin$).
Let $T\in\ulTCor^{**,\fin}$. Define $T'=(\ol{T},T^+-F_T,T^--F_T)$: this is an object of $\ulTCor^\fin$ by hypothesis, and even of $\ulTphiCor^\fin$ by Lemma \ref{key-lem}. We have a canonical morphism in $\ulTCor^\fin$
\begin{equation}\label{eq3.4}
T\by{\iota} T'
\end{equation}
which is the identity of $\ol{T}$.

\begin{lemma}\label{claim-4.8.1}
Let $S \in \ulTphiCor^\fin$; take any irreducible correspondence $V \in \ulTCor^\fin (T,S)$. %, and let $(\pi : \ol{T}_\pi \to \ol{T}) \in \Sep(T)$.
Let $V'$ (resp.  $\ol{V}$) be the closure of $V$ in ${T'}^\o \times S^\o$ (resp. in $\ol{T} \times \ol{S}$). Then
\begin{thlist}
%\item $\ol{V}$ is {\color{red} finite} over $\ol{T}$.
\item  $\ol{V}$ satisfies the modulus condition for $T' \otimes S^\vee$.
\item  $V'$ is  finite over ${T'}^\o$. In particular, $V\iso V' \times_{{T'}^\o} T^\o$.
\end{thlist}
\end{lemma}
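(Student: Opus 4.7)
My plan is to translate the modulus condition into an explicit inequality of effective Cartier divisors (via Lemma \ref{general-mod}), exploit that $F_T$ is itself an effective Cartier divisor (since $T\in \ulTCor^*$), and combine this with the disjointness $|S^+|\cap |S^-|=\emptyset$ coming from $S\in \ulTphiCor^\fin$. Since $V$ is assumed irreducible, $\ol{V}$ is integral and dominates $\ol{T}$ via the first projection; let $\nu:\ol{V}^N\to \ol{V}$ denote the normalization.

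For part (i), I would split into two cases. In the \emph{degenerate case} where the projection of $V$ to $\ol{S}$ lies in $|S^-|$, we have $\ol{V}\subset \ol{T}\times |S^-|\subset |(T'\otimes S^\vee)^+|$, so Lemma \ref{fully-contained2} delivers the modulus condition for $T'\otimes S^\vee$ once Condition \eqref{eqF} is checked, and the latter is immediate since $\ol{V}$ dominates $\ol{T}$. In the \emph{non-degenerate case}, Lemma \ref{general-mod} applies to $V\in \ulTCor^\fin(T,S)$ and unpacks the modulus condition for $T\otimes S^\vee$ as the inequality
\[T^+|_{\ol{V}^N}+S^-|_{\ol{V}^N}\geq T^-|_{\ol{V}^N}+S^+|_{\ol{V}^N}\]
of effective Cartier divisors on $\ol{V}^N$ (all pullbacks are defined because $\ol{V}$ dominates $\ol{T}$ and is not contained in $\ol{T}\times |S^-|$). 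Since $T\in \ulTCor^*$, the pullback $F_T|_{\ol{V}^N}$ is a well-defined effective Cartier divisor; subtracting it from both sides yields the analogous inequality with $T'^\pm=T^\pm-F_T$ in place of $T^\pm$, and running Lemma \ref{general-mod} in reverse produces the modulus condition for $T'\otimes S^\vee$.

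For part (ii), I observe that $V'=\ol{V}\cap (T'^\o\times S^\o)$ is an open subscheme of $\ol{V}\times_{\ol{T}}T'^\o$. The latter is finite over $T'^\o$ by base change of the finite morphism $\ol{V}\to \ol{T}$ supplied by the $\fin$ hypothesis. Thus it suffices to show $\ol{V}\cap (T'^\o\times |S^+|)=\emptyset$. Suppose for contradiction that $(p,q)\in \ol{V}$ with $p\in T'^\o$ and $q\in |S^+|$, and lift to $\tilde p\in \ol{V}^N$. Since $p\notin |T'^+|$, the pullback $T'^+|_{\ol{V}^N}$ has zero multiplicity at $\tilde p$; since $q\in |S^+|$ and $|S^+|\cap |S^-|=\emptyset$, the pullback $S^+|_{\ol{V}^N}$ has positive multiplicity at $\tilde p$ while $S^-|_{\ol{V}^N}$ has zero. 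Evaluating the inequality from part (i) at $\tilde p$ then gives $0\geq \operatorname{mult}_{\tilde p}T'^-|_{\ol{V}^N}+\operatorname{mult}_{\tilde p}S^+|_{\ol{V}^N}\geq 1$, a contradiction. The final identification $V\iso V'\times_{T'^\o}T^\o$ follows because $V$ is closed in $T^\o\times \ol{S}$ (the finite correspondence $V\to T^\o$ is proper and $\ol{S}$ is separated over $k$), so $V=\ol{V}\cap (T^\o\times \ol{S})=\ol{V}\times_{\ol{T}}T^\o$.

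The principal obstacle is handling the degenerate sub-cases where various divisor pullbacks fail to be defined, e.g.\ when the image of $\ol{V}$ in $\ol{S}$ lies in $|S^-|$ or the image in $\ol{T}$ is squeezed into $|T'^-|$; these require invoking Lemma \ref{fully-contained2} or Lemma \ref{l-containment} rather than direct divisor arithmetic. The $\ulTCor^{**}$ hypothesis is precisely what renders $F_T$ a Cartier divisor (hence pullbackable to $\ol{V}^N$) and ensures $T'^\o$ is smooth, making the translation between the two modulus conditions clean.
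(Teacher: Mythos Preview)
Your proof is correct and follows essentially the same route as the paper: split part (i) into the degenerate case $\ol{V}\subseteq \ol{T}\times |S^-|$ (handled by Lemma \ref{fully-contained2}) and the non-degenerate case (subtract $F_T|_{\ol{V}^N}$ from both sides of the divisor inequality), then use the inequality together with $|S^+|\cap|S^-|=\emptyset$ to show $\ol{V}\cap(T'^\o\times|S^+|)=\emptyset$ for part (ii).

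Two small remarks. First, your ``multiplicity at $\tilde p$'' language in (ii) is informal for arbitrary points of $\ol{V}^N$; the rigorous content is the support inclusion $|S^+|_{\ol{V}^N}|\subseteq |(T')^+|_{\ol{V}^N}|\cup|S^-|_{\ol{V}^N}|$, which follows from $(T')^+ + S^- \ge S^+$ on the normal scheme $\ol{V}^N$. The paper phrases this directly as the divisor inequality $(T')^+|_{\ol{V}^N}\ge S^+|_{\ol{V}^N}$ and then passes to supports, which is slightly cleaner. Second, your closing worries about degenerate sub-cases are largely unnecessary: since $\ol{V}$ dominates $\ol{T}$, neither $T^\pm|_{\ol{V}^N}$ nor $F_T|_{\ol{V}^N}$ can fail to be defined, and the only genuine degenerate case is $\ol{V}\subseteq \ol{T}\times|S^-|$, which you already handled. (Also, $F_T$ being Cartier needs only $\ulTCor^*$; the extra $**$ is for smoothness of $T'^\o$.)
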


\begin{proof}
%Let $\ol{V}$ be the closure of $V$ in $\ol{T} \times \ol{S}$. Noting that the proper birational morphism $\pi : \ol{T}_\pi \to \ol{T}$ induces a proper (surjective) morphism $\ol{V}_\pi \to \ol{V}$, Assertion (i) follows from the assumption that $\ol{V}$ is  finite over $\ol{T}$,  since $\ol{V}_\pi$ is a closed subset of $\ol{V}\times_{\ol{T}} \ol{T}_\pi$. 
%Next, we prove (ii). 
If $\ol{V}\subseteq \ol{T}\times S^-$, (i) holds by Lemma \ref{fully-contained2}. Otherwise, the modulus condition on $V$ implies that 
\[
T^+ |_{\ol{V}^N} + S^- |_{\ol{V}^N} \geq T^- |_{\ol{V}^N} + S^+ |_{\ol{V}^N} 
\]
which is equivalent to
%Since the proper surjective morphism $\ol{V}_\pi \to \ol{V}$ induces a morphism $\ol{V}_\pi^N \to \ol{V}^N$, and since we have $\pi_T^\ast T^\pm = T_\pi^\pm + E_T$, the above inequality implies 
\begin{equation}\label{eq4.3.2}
(T')^+ |_{\ol{V}^N} + S^- |_{\ol{V}^N} \geq (T')^- |_{\ol{V}^N} + S^+ |_{\ol{V}^N} .
\end{equation}

This proves (i).

Now we prove (ii). Note that the following inclusion holds:
\begin{equation}\label{eq4.3.3}
\ol{V} \cap (\ol{T} \times |S^+|) \subset \ol{V} \cap (|(T')^+| \times \ol{S}).
\end{equation}
Indeed, if $\ol{V}\subseteq \ol{T}\times S^-$, then the assumption that $S^+ \cap S^- = \emptyset$ implies that $\ol{V} \cap (\ol{T} \times |S^+|) = \emptyset$. If $\ol{V}\not\subset \ol{T}\times S^-$, then the assumption that $S^+ \cap S^- = \emptyset$ and the inequality \eqref{eq4.3.2} proven above imply that $(T')^+ |_{\ol{V}^N} \geq S^+ |_{\ol{V}^N}$, whence \eqref{eq4.3.3}.

By taking the complements of both sides of \eqref{eq4.3.3} in $\ol{V}$, we obtain
\[
\ol{V} \cap (\ol{T} \times S^\o ) \supset \ol{V} \cap ({T'}^\o \times \ol{S}),
\]
which implies 
\[
\ol{V} \cap ({T'}^\o \times \ol{S}) = \ol{V} \cap ({T'}^\o \times \ol{S}) \cap (\ol{T} \times S^\o ) = \ol{V} \cap ({T'}^\o \times S^\o) = V' .
\]

Since $\ol{V} \cap ({T'}^\o \times \ol{S})$ is  finite over ${T'}^\o$, we conclude that $V'$ is  finite over ${T'}^\o$.
In particular, $V' \times_{{T'}^\o} T^\o$ is  finite over $T^\o$, therefore the open immersion $V \inj V' \times_{{T'}^\o} T^\o$ is an isomorphism since $V$ is finite over $T^\o$ and dense in $V'$.  %This finishes the proof.
\end{proof}

Consider the commutative diagram of categories and functors
\begin{equation}\label{eq3.2}
\begin{CD}
\ulTphiCor^\fin @>\ul{t}^\fin>> \ulTCor^\fin\\
@Vb^\emptyset VV @VbVV\\
\ulTphiCor @>\ul{t}>> \ulTCor
\end{CD}
\end{equation}
where $b$ is the inclusion functor of Proposition \ref{localization1} (b) and $b^\emptyset$ is its restriction fo $\ulTphiCor^\fin$. %By this proposition and \cite[Prop. A.6.2]{motmod}, $b$ has a pro-left adjoint $b^!$. The analogue of Proposition \ref{localization1} (b) holds for $b^\emptyset$ (same proof), which therefore also has a pro-left adjoint $(b^\emptyset)^!$. A formal argument (dual to \cite[Prop. 4.3.6 (b)]{2017.1}) then shows that if $\ul{s}^\fin$ exists, so does $\ul{s}$ which is given by the formula
%\[\ul{s}= b^\emptyset\ul{s}^\fin b^!.\]

\begin{lemma} \label{l3.4} 
%Let $\ulTCor^{**,\fin}=\ulTCor^{**}\cap \ulTCor^\fin$ (a full subcategory of $\ulTCor^\fin$). 
The left adjoint $\ul{s}^\fin$ of $\ul{t}^\fin$ is defined at all objects of $\ulTCor^{**,\fin} \subset \ulTCor^{\fin}$.
\end{lemma}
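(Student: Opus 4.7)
The plan is to take $\ul{s}^\fin(T) := T' = (\ol{T}, T^+ - F_T, T^- - F_T)$, the modulus triple introduced just before \eqref{eq3.4}, with unit the canonical morphism $\iota : T \to T'$ of \eqref{eq3.4}. First I would verify that $T' \in \ulTphiCor^\fin$: the interior $(T')^\o = \ol{T} \setminus |T^+ - F_T|$ is smooth over $k$ by the very definition of $\ulTCor^{**}$, and the disjointness $|T^+ - F_T| \cap |T^- - F_T| = \emptyset$ follows from Lemma \ref{key-lem} applied with $E = F_T$, $D_1 = T^+$, $D_2 = T^-$.

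Second, for any $S \in \ulTphiCor^\fin$ I would prove that composition with $\iota$ yields a bijection
\[
\iota^* : \ulTphiCor^\fin(T', S) \longrightarrow \ulTCor^\fin(T, S),
\]
which suffices to establish $T' = \ul{s}^\fin(T)$. On interiors $\iota$ is the canonical open immersion $T^\o \hookrightarrow (T')^\o$ (note $|T^+ - F_T| \subseteq |T^+|$), so $\iota^*$ is pullback of finite correspondences along the open immersion $T^\o \times S^\o \hookrightarrow (T')^\o \times S^\o$. Injectivity is then immediate: two finite correspondences on $(T')^\o \times S^\o$ agreeing on the dense open $T^\o \times S^\o$ must coincide.

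For surjectivity, given an elementary $V \in \ulTCor^\fin(T, S)$, I would define $V'$ to be the closure of $V$ in $(T')^\o \times S^\o$. Lemma \ref{claim-4.8.1}(ii) shows that $V'$ is finite over $(T')^\o$ and that $V = V' \times_{(T')^\o} T^\o$, so $V'$ is an elementary finite correspondence from $(T')^\o$ to $S^\o$ pulling back to $V$. Lemma \ref{claim-4.8.1}(i) shows that the common closure $\ol{V'} = \ol{V}$ in $\ol{T} \times \ol{S}$ satisfies the modulus condition for $T' \otimes S^\vee$; this same closure also transfers the $\fin$-condition (finiteness over $\ol{S}$) from $V$ to $V'$. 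Hence $V' \in \ulTphiCor^\fin(T', S)$ with $\iota^*(V') = V$, providing the desired inverse.

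The essential work has already been carried out in Lemma \ref{claim-4.8.1}, so I do not expect any real obstacle. The only subtle point is checking that the $\fin$-decoration transfers between $V$ and $V'$, and this is automatic since they share the same closure in $\ol{T} \times \ol{S}$.
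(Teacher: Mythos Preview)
Your proposal is correct and follows essentially the same route as the paper: both define $\ul{s}^\fin(T):=T'$, invoke Lemma \ref{claim-4.8.1} (i)--(ii) for surjectivity of $\iota^*$, and use density of the open immersion $T^\o\hookrightarrow (T')^\o$ for injectivity. One tiny slip: the $\fin$-condition is finiteness of $\ol{V}$ over the total space of the \emph{source}, which here is $\ol{T}$ (common to $T$ and $T'$), not $\ol{S}$; since $\ol{V'}=\ol{V}$ in $\ol{T}\times\ol{S}$, the transfer is still automatic as you say.
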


\begin{proof} Let $T\in \ulTCor^{**,\fin}$. 
We claim that the morphism $\iota$ of \eqref{eq3.4} induces an isomorphism 
\[
\ulTphiCor^\fin (T',S) \xrightarrow{\sim} \ulTphiCor^\fin (T,S) = \ulTCor^\fin (T,S)
\]
for any $S \in \ulTCor^{\fin}$, which shows that the assignment $T \mapsto T'$ gives the left adjoint $\ul{s}^{\fin}$.
%the morphism $\iota$ of \eqref{eq3.4} corepresents the functor $\ulTphiCor^\fin\ni S\mapsto \ulTCor^\fin(T,S)$ (so that $T'=\ul{s}^\fin(T)$).

Let $S\in \ulTphiCor^\fin$ and let $\alpha\in \ulTCor^\fin(T,S)$. %Write $\alpha=\sum n_i V_i$ with $V_i\in \Cor(T^\o,S^\o)$ irreducible and $n_i\in\Z-\{0\}$. By assumption on $\alpha$, for all $i$  the closure $\ol{V_i}$ of $V_i$ in $\ol{T}\times \ol{S}$ is finite over $\ol{T}$; in particular, 
Lemma \ref{claim-4.8.1} (ii) shows that  $\alpha$ extends to a finite correspondence $\alpha'\in \Cor({T'}^\o,S^\o)$ via $\iota$, and Lemma \ref{claim-4.8.1} (i) shows that $\alpha'\in \ulTCor^\fin(T',S)$.
 %For this, we may assume that $\alpha=V$ is irreducible. There are two cases:
%\begin{thlist}
%\item $\ol{V}\not\subset \ol{T}\times S^-$. Then the modulus condition for $\alpha$ is
%\[T^+_{\ol{V}^N} + \UTF{00A0}S^-_{\ol{V}^N}\ge T^-_{\ol{V}^N} + \UTF{00A0}S^+_{\ol{V}^N}\]
%which is equivalent to
%\[{T'}^+_{\ol{V}^N} + \UTF{00A0}S^-_{\ol{V}^N}\ge {T'}^-_{\ol{V}^N} + \UTF{00A0}S^+_{\ol{V}^N}\]
%i.e., the modulus condition for $\alpha'$.
%\item $\ol{V}\subseteq \ol{T}\times S^-$. In this case, the modulus condition is satisfied for $\alpha'$  by Lemma \ref{mod-criterion1}.
%\end{thlist}
Since $T^\o\to {T'}^\o$ is a dense open immersion, $\Cor({T'}^\o,S^\o)\by{\iota^*} \Cor(T^\o,S^\o)$ is injective, which shows the uniqueness of the extension $\alpha'$ and concludes the proof.
\end{proof}

\begin{proof}[Proof of Theorem \ref{pro-separation}] By Proposition \ref{localization1} (b) and \cite[Prop. A.6.2]{motmod}, the functor $b$ of \eqref{eq3.2} has a pro-left adjoint $b^!$ given by the formula
\[b^! T = \proobj{\Sigma^\fin\downarrow T} \tilde T\]
for $T\in \ulTCor$, where $\Sigma^\fin \downarrow T$ is the category of arrows $\tilde T\by{s} T$ with $s\in \Sigma^\fin$, morphisms being taken in $\ulTCor^\fin$.

Let $(\Sigma^\fin \downarrow T)^{**}$ denote the full subcategory of $\Sigma^\fin \downarrow T$ consisting of those $\tilde T\by{s} T$ such that $\tilde T\in \ulTCor^{**}$; lemmas \ref{l3.2} and \ref{l3.3} show that it is cofinal in $\Sigma^\fin \downarrow T$. Thus, we also have
\begin{equation}\label{eq3.3}
b^! T \simeq \proobj{(\Sigma^\fin\downarrow T)^{**}} \tilde T.
\end{equation}

We can then use Lemma \ref{l3.4} to apply $\ul{s}^\fin$ termwise to the right hand side of \eqref{eq3.3}, yielding a functor
\[\ul{s}^\fin b^!:\ulTCor\to \pro{}\ulTphiCor^\fin\]
%For $T\in \ulTCor$ and $S\in \ulTphiCor^\fin$, we then have
%\begin{multline*}
 %\pro{}\ulTphiCor^\fin(\ul{s}^\fin b^!T,S)\simeq  \pro{}\ulTCor^\fin( b^!T,\ul{t}^\fin S)\\
 %\simeq \ulTCor(T,b\ul{t}^\fin S)\simeq \ulTCor(T,\ul{t} b^\emptyset S)
%\end{multline*}
which is pro-left adjoint to the composite $b\ul{t}^\fin\simeq \ul{t} b^\emptyset$. %On the other hand, the same argument as for $b$ shows that $b^\emptyset$ is a localization which has a pro-left adjoint $(b^\emptyset)^!$. 
The unit morphism of this adjunction may be written as
\[\eta_T:T\to b\ul{t}^\fin \ul{s}^\fin b^!T \simeq \ul{t} b^\emptyset \ul{s}^\fin b^! T.\]

Let us show that this morphism makes $b^\emptyset \ul{s}^\fin b^!$ a pro-left adjoint of $\ul{t}$. Since $b^\emptyset$ is the identity on objects, we have a commutative diagram for $T\in \ulTCor$ and $S\in \ulTphiCor^\fin$:
\[\begin{CD}
\pro{}\ulTphiCor^\fin(\ul{s}^\fin b^!T,S)@>\sim >> \ulTCor(T,\ul{t} b^\emptyset S)\\
@Vb^\emptyset VV @V{||}VV\\ 
\pro{}\ulTphiCor(b^\emptyset\ul{s}^\fin b^!T,b^\emptyset S)@>\eta_T^*\circ \ul{t} >> \ulTCor(T,\ul{t}  S)
\end{CD}\]
in which the top row is an isomorphism by the above-explained adjunction; this implies that the bottom row is surjective, and it remains to show that it is injective. Since $\ul{t}$ is fully faithful, we are left to show that $\eta_T^*$ is injective, and for this it suffices to show that $\ulomega(\eta_T)^*$ is injective, where $\ulomega:\ulTCor\to \Cor$ is the faithful functor of \eqref{eq1.1}. This is true because $\ulomega(\eta_T)$ is termwise a dense open immersion (this argument was already used in the proof of Lemma \ref{l3.4}).
\end{proof}

\begin{rk} One can replace the geometric argument at the end of this proof by a formal argument dual to the proof of \cite[Prop. 4.3.6 (b)]{2017.1}, noting that, like $b$,  $b^\emptyset$ is a localization and has a pro-left adjoint. We refrain from doing this because this formal argument would make the proof less transparent. Nevertheless, the concrete description of $\ulomega(\eta_T)$ is not essential here.
\end{rk}

\begin{rk} We don't know if the left adjoint of $\ul{t}^\fin$ is defined at all objects $T\in \ulTCor^\fin$; if it were, this would potentially allow us to remove the resolution of singularities hypothesis. In the case $T=(\ol{T},T^+,T^-)$ with $T^+=T^-$, this leads to the following intriguing question: is the functor
\[\Cor\ni S^\o\mapsto \Cor(\ol{T},S^\o)\]
corepresentable? (See Lemma \ref{l3.5} below.)
\end{rk}

\begin{lemma}\label{l3.5}
Let $T= (\ol{T},T^+,T^-) \in \ulTCor$, and assume $T^+=T^-$. Let $S \in \ulTphiCor$. Then, we have an identification
\[
\ulTCor^\fin (T,S) \cong \Z \defset{V \in \Cor(T^\o,S^\o)}{45mm}{$V$ is irreducible, and the closure of $V$ in $\ol{T} \times S^\o$ is finite over $\ol{T}$}.
\]
%where for any smooth scheme $X$ we set
%\begin{align*}
%\Cor^\nai (\ol{T},X)
%&=\Z \defset{V \subset \ol{T} \times X}{60mm}{$V$ is an integral closed subscheme which is finite over a component of $\ol{T}$} \\
%&=\Z \defset{V \in \Cor(T^\o,X)}{60mm}{$V$ is irreducible, and the closure $\ol{V}$ of $V$ in $\ol{T} \times X$ is finite over $\ol{T}$}
%\end{align*}
\end{lemma}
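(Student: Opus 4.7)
My plan is to reduce the modulus condition for $T\otimes S^\vee$ to the transparent form $\ol{V}\subseteq \ol{T}\times S^\o$, then match the resulting characterisation with the stated basis of $\ulTCor^\fin(T,S)$. First I compute the fundamental locus $F_{T\otimes S^\vee}$. By definition,
\[(T\otimes S^\vee)^+ = T^+\times \ol{S} + \ol{T}\times S^-, \qquad (T\otimes S^\vee)^- = T^-\times \ol{S} + \ol{T}\times S^+ = T^+\times \ol{S} + \ol{T}\times S^+,\]
where the last equality uses $T^+=T^-$. Computing the scheme-theoretic intersection at the level of ideal sheaves,
\[I_{F_{T\otimes S^\vee}} = I_{(T\otimes S^\vee)^+} + I_{(T\otimes S^\vee)^-} = I_{T^+\times\ol{S}}\cdot\bigl(I_{\ol{T}\times S^-} + I_{\ol{T}\times S^+}\bigr).\]
The hypothesis $S\in\ulTphiCor$ gives $|S^+|\cap|S^-|=\emptyset$, so by Lemma \ref{key-lem} (applied to $D_1=\ol{T}\times S^+$, $D_2=\ol{T}\times S^-$, $E=0$) the parenthesised sum is the unit ideal. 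Hence $F_{T\otimes S^\vee} = T^+\times\ol{S}$ scheme-theoretically, which is already a Cartier divisor; its blow-up $\pi_{T\otimes S^\vee}$ is therefore an isomorphism, and a direct calculation gives $\pi_{T\otimes S^\vee}^*(T\otimes S^\vee)^- - E_{T\otimes S^\vee} = \ol{T}\times S^+$.

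Next I interpret the modulus condition. For any irreducible $V\in\Cor(T^\o,S^\o)$ with closure $\ol{V}$ in $\ol{T}\times\ol{S}$, the generic point of $V$ lies in $T^\o\times S^\o$, which is disjoint from $F_{T\otimes S^\vee}=T^+\times\ol{S}$, so Condition~\eqref{eqF} of Definition \ref{def-modcond} holds automatically. Since $\pi_{T\otimes S^\vee}$ is an isomorphism, the strict transform of $\ol{V}$ equals $\ol{V}$ itself, and the modulus condition reduces to $\ol{V}\cap(\ol{T}\times|S^+|)=\emptyset$, equivalently $\ol{V}\subseteq\ol{T}\times S^\o$. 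Combined with the admissibility requirement that $\ol{V}$ be proper over $\ol{T}$, this characterises $\ulTCor(T,S)$ on irreducible generators.

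Finally I match the two sides. Let $V'$ denote the closure of $V$ in $\ol{T}\times S^\o$, so $V' = \ol{V}\cap(\ol{T}\times S^\o)$. If $V$ generates $\ulTCor^\fin(T,S)$, then $\ol{V}\subseteq\ol{T}\times S^\o$ by the previous step, so $V'=\ol{V}$ is finite over $\ol{T}$. Conversely, assume $V'\to\ol{T}$ is finite; then it is proper, and using the separatedness of $\ol{T}\times\ol{S}\to\ol{T}$, the factorisation $V'\hookrightarrow\ol{T}\times S^\o\hookrightarrow\ol{T}\times\ol{S}$ is proper, hence closed. Therefore $V'=\ol{V}$, which simultaneously yields the modulus condition, admissibility, and the finiteness condition needed for $\ulTCor^\fin$. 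This bijection between irreducible generators gives the stated isomorphism. The main obstacle I anticipate is really the ideal-sheaf computation in the first step: the comaximality of $I_{\ol{T}\times S^+}$ and $I_{\ol{T}\times S^-}$ is exactly the input $S\in\ulTphiCor$, without which the blow-up would be non-trivial and the modulus condition would not reduce to the clean form $\ol{V}\subseteq\ol{T}\times S^\o$.
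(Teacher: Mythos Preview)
Your proof is correct and takes a genuinely different route from the paper's. The paper reduces the modulus condition by applying Lemma~\ref{general-mod} (the normalization criterion) together with Lemma~\ref{fully-contained2}, splitting into the cases $\ol{V}\subset\ol{T}\times|S^-|$ and $\ol{V}\not\subset\ol{T}\times|S^-|$, and then simplifying the resulting inequality $S^-|_{\ol{V}^N}\geq S^+|_{\ol{V}^N}$ using $S^+\cap S^-=\emptyset$. You instead compute the fundamental locus $F_{T\otimes S^\vee}$ directly: the hypothesis $T^+=T^-$ makes $I_{T^+\times\ol{S}}$ a common factor of both ideals, and the hypothesis $S\in\ulTphiCor$ makes the remaining factor the unit ideal, so $F_{T\otimes S^\vee}=T^+\times\ol{S}$ is already Cartier and the blow-up in Definition~\ref{d2.2} is trivial. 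The modulus condition then collapses to $\ol{V}\cap(\ol{T}\times|S^+|)=\emptyset$ straight from Definition~\ref{def-modcond}, with no case distinction and no passage to normalizations. This is more elementary and makes the role of each hypothesis completely transparent. You also spell out, via a properness argument, why the closure of $V$ in $\ol{T}\times S^\o$ agrees with $\ol{V}$ once the modulus condition holds, a step the paper leaves implicit when identifying the two sides as subgroups of $\Cor(T^\o,S^\o)$.
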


\begin{proof}
Since both sides are subgroups of $\Cor(T^\o,S^\o)$, it suffices to prove that for any integral cycle $V \in \Cor(T^\o,S^\o)$, the closure $\ol{V}$ of $V$ in $\ol{T} \times \ol{S}$ satisfies the modulus condition for $T \otimes S^\vee$ if and only if $\ol{V} \subset \ol{T} \times S^\o$. This can be checked as follows: 
\begin{align*}
&\text{$\ol{V}$ satisfies the modulus condition for $T \otimes S^\vee$} \\
&\Leftrightarrow^1 \text{$\ol{V} \subset \ol{T} \times |S^-|$\  or\  $T^+|_{\ol{V}^N} + S^-|_{\ol{V}^N} \geq S^+|_{\ol{V}^N} + T^-|_{\ol{V}^N}$} \\
&\Leftrightarrow^2 \text{$\ol{V} \subset \ol{T} \times |S^-|$\  or\  $S^-|_{\ol{V}^N} \geq S^+|_{\ol{V}^N}$} \\
&\Leftrightarrow^3 \text{$\ol{V} \subset \ol{T} \times |S^-|$\  or\  $S^+|_{\ol{V}^N} = \emptyset$} \\
&\Leftrightarrow^4 \text{$\ol{V} \subset \ol{T} \times |S^-|$\  or\  $S^+|_{\ol{V}} = \emptyset$} \\
&\Leftrightarrow^5 \text{$\ol{V} \subset \ol{T} \times |S^-|$\  or\  $\ol{V} \subset \ol{T} \times S^\o$} \\
&\Leftrightarrow^6 \text{$\ol{V} \subset \ol{T} \times S^\o$},
\end{align*}
where $\Leftrightarrow^1$ follows from Lemma \ref{general-mod} and Lemma \ref{fully-contained2}, $\Leftrightarrow^2$ from $T^+=T^-$, $\Leftrightarrow^3$ from $S^+ \cap S^- = \emptyset$, $\Leftrightarrow^4$ from the surjectivity of $\ol{V}^N \to \ol{V}$, $\Leftrightarrow^5$ is obvious, and $\Leftrightarrow^6$ follows again from $S^+ \cap S^- = \emptyset$ (which is equivalent to $|S^-| \subset S^\o$). 
%This finishes the proof.
\end{proof}

\subsection{Relation to $\protect\Cor$}\label{section-CorTCor}

\begin{definition}\label{d3.2}
Define a functor $\ul{\omega} : \ulTCor \to \Cor$ by 
\[
\ul{\omega}(T) := T^\o.
\] 
Moreover, define a functor $\lambda : \Cor \to \ulTCor$ by 
\[
\lambda(X) := (X,\emptyset,\emptyset).
\]
It is easy to check the functorialities.
\end{definition}

\begin{prop}\label{p3.1}
The functor $\lambda$ is left adjoint to $\ul{\omega}$.
\end{prop}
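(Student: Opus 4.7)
The plan is to show that the natural forgetful map
\[
\Phi_{X,T}:\ulTCor(\lambda(X),T)\longrightarrow \Cor(X,T^\o),\qquad \alpha\mapsto \alpha
\]
(i.e.\ the action of $\ul\omega$ on Hom groups, after identifying $\ul\omega\lambda(X)=X$) is a natural bijection. Injectivity is free: this map is the restriction of the faithful forgetful functor $\ulomega$ of \eqref{eq1.1}. Naturality in $X$ and $T$ is automatic once we know $\Phi_{X,T}$ is a bijection, because on elementary generators the bijection will be the \emph{identity} on underlying algebraic cycles and composition in $\ulTCor$ restricts (by definition) to composition in $\Cor$. So everything reduces to proving that $\Phi_{X,T}$ is surjective.

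By $\Z$-linearity it suffices to show that every elementary finite correspondence $V\in\Cor(X,T^\o)$, i.e.\ an integral closed subscheme of $X\times T^\o$ finite surjective over a component of $X$, defines an elementary modulus correspondence $\lambda(X)\to T$. Two conditions must be checked: that its closure $\ol V$ in $X\times\ol T$ is proper over $X$, and that $\ol V$ satisfies the modulus condition for
\[
\lambda(X)\otimes T^\vee=(X\times\ol T,\; X\times T^-,\; X\times T^+).
\]

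For the first, I would argue that $V\hookrightarrow X\times\ol T$ is already closed: the map $V\to X$ is finite (hence proper), the projection $X\times\ol T\to X$ is separated, and so the locally closed immersion $V\hookrightarrow X\times\ol T$ is proper. Thus $\ol V=V$, which is even finite over $X$.

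For the modulus condition, I would unwind Definition \ref{def-modcond} for the triple above. Its fundamental locus is $F=X\times F_T\subset X\times T^+$. Since $V\subset X\times T^\o=X\times(\ol T\smallsetminus T^+)$, $V$ is disjoint from $X\times T^+$, hence from $F$; this gives Condition \eqref{eqF}. Moreover the blow-up $\pi\colon \Bl_F(X\times\ol T)\to X\times\ol T$ is an isomorphism over the open complement of $F$, which contains $V$; therefore the strict transform $\tilde V$ maps isomorphically to $V$ and avoids the exceptional divisor $E$. Away from $E$, the strict transform of the negative divisor $X\times T^+$ agrees with $\pi^{-1}(X\times T^+)$, so
\[
\tilde V\cap \widetilde{X\times T^+}\;\cong\; V\cap (X\times T^+)=\emptyset,
\]
which is precisely the modulus condition. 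This completes the verification; there is no real obstacle, since the ``empty'' moduli of $\lambda(X)$ force $F$ into $X\times T^+$, which $V$ avoids by construction.
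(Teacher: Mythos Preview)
Your proof is correct and follows the same overall strategy as the paper: both identify $\ulTCor(\lambda(X),T)$ with $\Cor(X,T^\o)$ by checking that every elementary $V\in\Cor(X,T^\o)$ is already an elementary modulus correspondence, with the properness step handled identically (via the ``proper over $X$ + separated implies closed'' argument).

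The one point of difference is how the modulus condition is verified. The paper appeals to Lemma \ref{general-mod} (the inequality criterion on the normalization), which forces a case split according to whether or not $V\subset X\times |T^-|$; in the first case Lemma \ref{mod-criterion1} is invoked, in the second the inequality $T^-|_{V^N}\ge T^+|_{V^N}$ is checked directly. You instead go back to Definition \ref{def-modcond}: since $V\subset X\times T^\o$ is disjoint from $X\times T^+$, it is a fortiori disjoint from the fundamental locus $F=X\times F_T$, so the blow-up is an isomorphism near the strict transform $\tilde V$ and the condition $\tilde V\cap\widetilde{(X\times T^+)}=\emptyset$ is immediate. This is a small but genuine simplification---no case split, no auxiliary lemmas---at the cost of unpacking the definition by hand.
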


\begin{proof}
Let $X \in \Cor$ and $T \in \ulTCor$. It suffices to check that 
\[
\ulTCor ((X,\emptyset,\emptyset),T) = \Cor(X,T^\o).
\]
The left hand side is obviously contained in the other.
Let $V \in \Cor(X,T^\o)$ be any elementary finite correspondence.
Since $V$ is finite (hence proper) over $X=(X,\emptyset,\emptyset)^\o$, it suffices to check the modulus condition.
If $V \subset X \times |T^-|$, then we are done by Proposition \ref{mod-criterion1}.
Assume that $V \not\subset X \times |T^-|$.
Note that $V$ is closed in $X \times \ol{T}$ since $V$ is proper over $X$. Let $V^N$ be the normalization of $V$.
Then we have to check that the following inequality holds:
\[
T^- |_{V^N} \geq T^+ |_{V^N}.
\]
However, since $V \subset X \times T^\o$, the right hand side is zero. 
%This finishes the proof.
\end{proof}

\subsection{Relation to $\protect\ulMCor$}\label{section-MCorTCor}

One of the aim of this paper is to enlarge the category of motives with modulus of \cite{motmod}. The fundamental building blocks of this theory are the two types of categories of modulus pairs, denoted $\ulMCor$ and $\MCor$, which were introduced in \cite{motmodI}. Our task in this subsection is to compare these categories with the categories of modulus triples.

%\subsubsection{The category of (admissible) modulus pairs}
%\begin{prop}
%There exists a natural fully faithful functor $T_{(-)} : \ulMCor \to \ulTCor$.
%\end{prop}
%\begin{proof}
%Consider the association $M=(\ol{M},M^\infty ) \mapsto T_M := (\ol{M},M^\infty,\emptyset)$. Then, for any modulus pairs $M_1,M_2$, we have $\ulTCor (T_{M_1},T_{M_2}) = \Cor(M_1,M_2)$, and the compositions are compatible. Therefore, we obtain a fully faithful functor $T_{-}$ as desired. 
%\end{proof}

%\begin{lemma}[Relation to admissible correspondences]
%Let $M$ and $N$ be modulus pairs. Then, we can associate them modulus triples $T_M = (\ol{M},M^\infty ,\emptyset), T_N = (\ol{N},N^\infty ,\emptyset)$.
%Then, there is a natural identification
%\[\{\parbox{65mm}{modulus correspondences $T_M \to T_N$}\}= \{\parbox{65mm}{admissible correspondences $M \to N$}\}.\]
%\end{lemma}

%\begin{proof}
%This follows immediately from the definitions of both sides and Lemma \ref{general-mod}.
%\end{proof}

Consider the following commutative diagram of monoidal and fully faithful functors:
\begin{equation}\label{eq3.5}
\begin{gathered}
\xymatrix{
\ulMCor \ar[rr]^{\ul{\phi}} \ar[dr]_{\ul{\psi}} && \ulTphiCor \ar[dl]^{\ul{t}} \\
& \ulTCor, &
}
\end{gathered}
\end{equation}
where $\ul{\phi}(M) = \ul{\psi}(M) = (\ol{M},M^\infty,\emptyset )$, and $\ul{t}(T) = T$.

%Consider the monoidal fully faithful functor 
%\[\ul{\phi} : \ulMCor \to \ulTphiCor, \ \ M = (\ol{M},M^\infty ) \mapsto (\ol{M},M^\infty,\emptyset).\]
%This induces a string of adjoint functors $(\ul{\phi}_!,\ul{\phi}^\ast,\ul{\phi}_\ast)$, where 
%\[\ul{\phi}^\ast : \ulTphiPST \to \ulMPST\]
%is the restriction functor. 

\begin{thm}\label{adjoint-phi}
$\ul{\phi}$ admits a left adjoint $\ul{p}$ and a right adjoint $\ul{q}$.
\end{thm}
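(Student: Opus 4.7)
The strategy is to exhibit both adjoints by explicit formulas and verify the universal properties by a pointwise analysis on $\ol V^N$ for an elementary correspondence $V$, using the disjointness $T^+\cap T^-=\emptyset$ at every step.

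For the right adjoint $\ul{q}$, I will set $\ul{q}(T):=(\ol T,T^+)$, which lies in $\ulMCor$ since $T^\o=\ol T\setminus T^+$ is smooth by the definition of $\ulTphiCor$. The proposed counit $\ul\phi\ul{q}(T)=(\ol T,T^+,\emptyset)\to T$ will be the identity diagonal, trivially admissible. Both $\Hom_{\ulMCor}(M,\ul{q}(T))$ and $\Hom_{\ulTphiCor}(\ul\phi(M),T)$ sit as subgroups of the same group $\Cor(M^\o,T^\o)$, so the bijection reduces to identifying their defining modulus inequalities. After unwinding Lemma \ref{general-mod}, these differ only by the extra summand $h^*T^-$ on the right-hand side, where $g,h$ denote the projections from $\ol V^N$ to $\ol M$ and $\ol T$. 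Pointwise: at any $x$ with $h(x)\in|T^+|$ disjointness forces $h^*T^-$ to vanish at $x$, so the two conditions coincide; elsewhere both reduce to triviality. Functoriality of $\ul{q}$ follows from the same case analysis applied to composable pairs of morphisms.

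For the left adjoint $\ul{p}$, the plan is to first reduce to the case where $\ol T$ is irreducible, via the canonical decomposition $T\cong\bigoplus_i T_i$ of Example \ref{ex2.2} and the fact that a left adjoint must preserve direct sums in an additive category. In the irreducible case: if $T^-=\emptyset$ then $T$ is in the essential image of $\ul\phi$ and full faithfulness identifies $\Hom_{\ulTphiCor}(T,\ul\phi(-))$ with $\Hom_{\ulMCor}((\ol T,T^+),-)$, so one takes $\ul{p}(T):=(\ol T,T^+)$. If instead $T^-\ne\emptyset$, I will show $\Hom_{\ulTphiCor}(T,\ul\phi(M))=0$ for every $M$, so that the zero object $(\emptyset,\emptyset)\in\ulMCor$ corepresents. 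The argument: any elementary $V\in\Cor(T^\o,M^\o)$ is finite surjective over the irreducible $T^\o\supseteq|T^-|$, so $\ol V$ has a non-empty fiber over any point of $|T^-|$; at any $x\in\ol V^N$ above that fiber, disjointness gives $\mathrm{mult}_x(f^*T^+)=0$ while $\mathrm{mult}_x(f^*T^-)>0$, making the inequality $f^*T^+\geq f^*T^-+g^*M^\infty$ impossible. Reassembling yields $\ul{p}(T)=\bigoplus_{i\,:\,T^-|_{\ol T_i}=\emptyset}(\ol T_i,T^+|_{\ol T_i})$, with unit $T\to\ul\phi\ul{p}(T)$ the sum of diagonals over the ``good'' components; admissibility of this unit is immediate because all $T^-|_{\ol T_i}$ in the good range vanish.

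The main technical nuisance I foresee is bookkeeping: one must check that the Cartier-divisor pullbacks to $\ol V^N$ used in each computation are actually defined, which follows from Remark \ref{rmq-position} together with interior smoothness (each irreducible component of $V$ dominates a component of $T^\o$, which is not contained in $|T^+|\cup|T^-|$), and one must confirm naturality of the Hom-bijections (immediate, since they are the identity on underlying cycles in $\Cor(-,-)$). Beyond these routine verifications, the entire conceptual weight is carried by the disjointness of $T^+$ and $T^-$, which is precisely why the adjoints exist as honest functors on $\ulTphiCor$ while only a pro-left adjoint is available on all of $\ulTCor$ (Theorem \ref{pro-separation}).
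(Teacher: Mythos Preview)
Your proposal is correct and follows essentially the same route as the paper. The only packaging difference is in the left-adjoint vanishing step: where you do a direct pointwise argument on $\ol V^N$ to show $\ulTphiCor(T,\ul\phi M)=0$ when $T$ is irreducible with $T^-\ne\emptyset$, the paper invokes Lemma \ref{-goes-} (which yields $V\cap(|T^-|\times\ol M)\subset\ol T\times|(\ul\phi M)^-|=\emptyset$, contradicting surjectivity of $V\to T^\o\supseteq|T^-|$); your argument is just an unpacking of that lemma in this special case. For the right adjoint your computation and the paper's are identical.
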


\begin{proof}
We first prove that $\ul{\phi}$ admits a left adjoint. For any $T \in \ulTphiCor$, %define an object $\ul{p}(T)$ in $\ulMCor$ as follows.
 let $T \cong \oplus_{i\in I} T_i$ be the canonical decomposition of $T$ as in Example \ref{ex2.2} and set
\[
\ul{p}(T) := \bigoplus_{i \in I, T_i^- = \emptyset} (\ol{T}_i,T_i^+).
\]

%Assume that $\ol{T}$ is integral. Then, we set $\ul{p}(T)=(\ol{T},T^+)$ if $T^-=\emptyset$, and $\ul{p}(T)=0$ (the zero object) if $T^-\neq \emptyset$. In general, we write $\ol{T}=\cup_{i\in I} \ol{T}_i$, where $\ol{T}_i$ are irreducible components of $\ol{T}$, and we set $T_i := (\ol{T}_i,T^+|_{\ol{T}_i},T^-|_{\ol{T}_i})$ for each $i \in I$. We define $\ul{p}(T):=\oplus_{i\in I}\ul{p}(T_i)$. {\color{red} Use Example \ref{ex2.2} to simplify this argument, if it works.}
Then, the natural projection  
\[T^\o = \bigoplus_{i\in I} T_i^\o \to \bigoplus_{i\in I, T_i^-=\emptyset} T_i^\o = (\ul{p}(T))^\o \]
induces a morphism $T \to \ul{p}(T)$ in $\ulTphiCor$. It induces for any $M \in \ulMCor$ a map of abelian groups
\[\ulMCor (\ul{p}(T),M) = \ulMCor (\ul{\phi}\ul{p}(T),\ul{\phi}M) \to \ulTphiCor (T,\ul{\phi}M).\]
It suffices to prove that this is an isomorphism for any $T \in \ulTphiCor$ and $M \in \ulMCor$. This is obvious if $T^-=\emptyset$. Assume that $T^-\neq \emptyset$. Then, we have $\ulMCor (p(T),M)=\ulMCor (0,M)=0$. Therefore, we are reduced to showing that $\ulTphiCor (T,\ul{\phi}M) = 0$, which follows from the fact that $(\ul{\phi}M)^-=\emptyset$ and Proposition \ref{-goes-}. 

We prove the existence of a right adjoint of $\ul{\phi}$. For any $T \in \ulTphiCor$, define an object $\ul{q}(T)$ in $\ulMCor$ by setting $\ul{q}(T)=(\ol{T},T^+)$. Then, the identity on $\ol{T}$ induces a morphism $\ul{\phi}\ul{q}(T) = (\ol{T},T^+,\emptyset ) \to T$ in $\ulTphiCor$. It induces for any $M \in \ulMCor$ a map of abelian groups
\[
\ulMCor (M,\ul{q}(T)) = \ulMCor (\ul{\phi}M,\ul{\phi}\ul{q}(T)) \to  \ulTphiCor (\ul{\phi}M,T).
\]
It suffices to prove that this is an isomorphism. Let $V \in \Cor (M^\o , T^\o)$ be an elementary finite correspondence. Let $\ol{V}$ be the closure of $V$ in $\ol{M} \times \ol{T}$, and let $\ol{V}^N$ be its normalization. Then, $V$ belongs to $\ulMCor(\ul{\phi}M,T)$ if and only if $\ol{V}$ is proper over $\ol{M}$ and the following inequality holds:
\begin{equation}\label{eq4.1.1}
M^\infty |_{\ol{V}^N} + T^- |_{\ol{V}^N} \geq T^+ |_{\ol{V}^N}.
\end{equation}
On the other hand, $V$ belongs to $\ulMCor (M,\ul{q}(T))$ if and only if $\ol{V}$ is proper over $\ol{M}$ and the following inequality holds:
\begin{equation}\label{eq4.1.2}
M^\infty |_{\ol{V}^N} \geq T^+ |_{\ol{V}^N}.
\end{equation}
By the assumption that $T^+ \cap T^- = \emptyset$, the conditions \eqref{eq4.1.1} and \eqref{eq4.1.2} are equivalent. 
%This finishes the proof. 
\end{proof}

\begin{thm}\label{adjoint-psi} 
Assume that $k$ admits the resolution of singularities. 
Then, $\ul{\psi}$ admits a pro-left adjoint.
\end{thm}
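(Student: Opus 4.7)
The plan is to build the pro-left adjoint of $\ul{\psi}$ by chaining the two previously-established adjunctions exploited via the factorization $\ul{\psi} = \ul{t} \circ \ul{\phi}$ from diagram \eqref{eq3.5}. By Theorem \ref{adjoint-phi}, the functor $\ul{\phi}:\ulMCor\to \ulTphiCor$ admits a left adjoint $\ul{p}:\ulTphiCor\to \ulMCor$. By Theorem \ref{pro-separation} (where the resolution of singularities hypothesis is used), the inclusion $\ul{t}:\ulTphiCor\to \ulTCor$ admits a pro-left adjoint $\ul{s}:\ulTCor\to \pro{}\ulTphiCor$.

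First I would extend $\ul{p}$ termwise to a functor $\pro{}\ul{p}:\pro{}\ulTphiCor\to \pro{}\ulMCor$, and then \emph{define} the candidate pro-left adjoint of $\ul{\psi}$ to be the composite
\[
\ul{\psi}^! := \pro{}\ul{p}\circ \ul{s}:\ulTCor\longrightarrow \pro{}\ulMCor.
\]
Concretely, if $\ul{s}(T) = \proobj{\pi}T_\pi$ is the pro-object constructed in the proof of Theorem \ref{pro-separation}, then $\ul{\psi}^!(T) = \proobj{\pi} \ul{p}(T_\pi)$.

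The verification that $\ul{\psi}^!$ is indeed a pro-left adjoint of $\ul{\psi}$ is then a direct chain of natural isomorphisms. For any $T\in \ulTCor$ and $M\in \ulMCor$:
\begin{align*}
\pro{}\ulMCor(\ul{\psi}^!(T),M)
&= \colim_\pi \ulMCor(\ul{p}(T_\pi),M) \\
&\cong \colim_\pi \ulTphiCor(T_\pi,\ul{\phi}(M)) \\
&= \pro{}\ulTphiCor(\ul{s}(T),\ul{\phi}(M))  \\
&\cong \ulTCor(T,\ul{t}\,\ul{\phi}(M)) \\
&= \ulTCor(T,\ul{\psi}(M)),
\end{align*}
where the first equality uses that $M$ is a constant pro-object, the first isomorphism is the adjunction $\ul{p}\dashv \ul{\phi}$ applied termwise, and the second isomorphism is the pro-adjunction $\ul{s}\dashv \ul{t}$ from Theorem \ref{pro-separation}. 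Naturality in $T$ and $M$ is immediate from the naturality of each constituent isomorphism.

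I don't expect a serious obstacle: everything is a formal consequence of the two already-proved adjunction theorems, once one observes that a termwise-applied functor preserves $\pro{}$-Hom groups against constant pro-objects. The only conceptual point worth underlining is that although $\ul{s}$ relies on resolution of singularities, no further geometric input is needed here; the composition step is purely categorical.
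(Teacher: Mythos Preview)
Your proposal is correct and takes exactly the same approach as the paper: the paper's proof simply says ``This follows from Theorems \ref{pro-separation} and \ref{adjoint-phi},'' and your argument spells out precisely how, by composing the pro-left adjoint $\ul{s}$ of $\ul{t}$ with (the pro-extension of) the left adjoint $\ul{p}$ of $\ul{\phi}$ via the factorization $\ul{\psi}=\ul{t}\circ\ul{\phi}$.
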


\begin{proof}
This follows from Theorems \ref{pro-separation} and \ref{adjoint-phi}.
\end{proof}

\subsection{The coadmissible version: $\protect\ulWCor, \protect\WCor$}\label{s3.5}

%{\color{red} I am not sure if the content of this subsection is useful or not. But I left it alive for in case.}

\begin{definition}
Write $\ulWCor$ for the category whose objects are modulus pairs $M=(\ol{M},M^\infty )$ with $\ol{M}$ smooth over $k$ such that for any $M_1,M_2 \in \ulWCor$, we have \[\ulWCor (M_1,M_2) := \ulTCor ((\ol{M}_1,\emptyset ,M_1^\infty ),(\ol{M}_2,\emptyset ,M_1^\infty )).\]
The category $\ulWCor$ is called the category of \emph{coadmissible correspondences}.
Clearly, the category $\ulWCor$ is naturally identified with the full subcategory of $\ulTCor$ consisting of interiorly smooth modulus triples of the form $(\ol{T},\emptyset,T^-)$.
\end{definition}

\subsection{Correspondences in bad position}

\begin{definition}
Define $\ulTbCor$ to be the sub-quiver of $\ulTCor$ with same objects such that for any $S,T \in \ulTCor$ we have
\[
\ulTbCor(S,T) := \defset{\alpha \in \ulTCor(S,T)}{50mm}{For any component $V$ of $\alpha$, the image of $V$ in $T^\o$ is contained in $T^\o \cap T^-$.}.
\]
For the well-definedness of the composition, see the following proposition.
\end{definition}

\begin{prop}\label{tb-ideal}\
\begin{enumerate}
\item Let $S \xrightarrow{\alpha} T \xrightarrow{\beta} U$ be a diagram in $\ulTCor$. Assume that one of $\alpha$ or $\beta$ is a morphism in $\ulTbCor$, then so is the composite $\beta \circ \alpha$.
\item Let $S_1\by{\alpha_1} T_1$, $S_2\by{\alpha_2} T_2$ be two morphisms in $\ulTCor$. If either $\alpha_1$ or $\alpha_2$ belongs to $\ulTbCor$, so does $\alpha_1\otimes \alpha_2$ (see Proposition \ref{p2.1} for $\otimes$).
\end{enumerate}
\end{prop}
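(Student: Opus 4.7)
The plan is to treat (1) by a case analysis on which of $\alpha,\beta$ lies in $\ulTbCor$ and to treat (2) by a direct image computation. In both parts I would reduce immediately to the case of elementary correspondences.

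For (1), assume first that $\alpha \in \ulTbCor$. Let $\gamma$ be a component of $\beta\circ\alpha$. Pick, as in Step 1 of the proof of Proposition~\ref{p-composition}, an irreducible component $\gamma'$ of $\alpha\times U^\o \cap S^\o\times\beta \subset S^\o\times T^\o\times U^\o$ whose generic point lies over the generic point of $\gamma$; let $V\subset \alpha$ and $W\subset \beta$ be the components of $\alpha$, $\beta$ containing the images of $\gamma'$. By hypothesis on $\alpha$, the image of $V$ in $T^\o$ is in $T^\o\cap T^-$, hence so is the image of $\gamma'$ in $T^\o$. Therefore the image of $\gamma'$ in $T^\o\times U^\o$ lies in $W\cap(|T^-|\times U^\o)$, and by Lemma~\ref{-goes-} applied to $W\subset\beta\in\ulTCor(T,U)$ this is contained in $\ol{T}\times|U^-|$. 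Hence the image of $\gamma'$ (and thus of $\gamma$, since the generic point of $\gamma'$ surjects onto that of $\gamma$ in $U^\o$) in $U^\o$ lies in $U^\o\cap U^-$. The case $\beta\in\ulTbCor$ is even simpler: with the same $\gamma'$ and $W$, the component $W$ has image in $U^\o\cap U^-$ by assumption, so the image of $\gamma'$, hence of $\gamma$, in $U^\o$ lies in $U^\o\cap U^-$.

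For (2), write $\alpha_i=\sum_j V_{i,j}$ as a sum of its elementary (integral) components for $i=1,2$. Every integral component $C$ of the finite correspondence $\alpha_1\otimes\alpha_2\in\Cor(S_1^\o\times S_2^\o,T_1^\o\times T_2^\o)$ is an irreducible component of some product $V_{1,j}\times V_{2,j'}$, and in particular its image in $T_1^\o\times T_2^\o$ is contained in the image of $V_{1,j}$ times the image of $V_{2,j'}$. If, say, $\alpha_1\in\ulTbCor$, then the first factor is contained in $T_1^\o\cap T_1^-$, so the image of $C$ lies in $(T_1^-\cap T_1^\o)\times T_2^\o$, which is contained in $(T_1\otimes T_2)^\o\cap(|T_1^-|\times\ol{T}_2) \subset (T_1\otimes T_2)^\o\cap|(T_1\otimes T_2)^-|$, recalling from Proposition~\ref{p2.1} that $(T_1\otimes T_2)^-=T_1^-\times\ol{T}_2+\ol{T}_1\times T_2^-$. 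The symmetric case is identical. This proves that $\alpha_1\otimes\alpha_2\in\ulTbCor$.

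The only delicate step is the one invoking Lemma~\ref{-goes-} in the case $\alpha\in\ulTbCor$ of (1): a priori the image of the component $W\subset\beta$ in $T^\o$ need not be contained in $T^-$, so one cannot conclude immediately that $W$ itself is trapped in $\ol{T}\times|U^-|$. What is used is only that the image of $\gamma'$ in $T^\o\times U^\o$ lies in $|T^-|\times U^\o$, which is exactly the hypothesis of Lemma~\ref{-goes-} applied pointwise; nothing further needs to be said about $W$ globally.
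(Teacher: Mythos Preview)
Your proof is correct. For (1), you essentially unfold the argument behind Claim~\ref{claim1.22}~(a),(b) (which is what the paper cites in contrapositive form), using Lemma~\ref{-goes-} exactly as the paper does there; so the approaches coincide. For (2), the paper instead factors $\alpha_1\otimes\alpha_2=(\alpha_1\otimes 1)\circ(1\otimes\alpha_2)$ and reduces to (1) plus the triviality that $\alpha\otimes 1\in\ulTbCor$ when $\alpha\in\ulTbCor$; your direct image computation is an equally short alternative, and avoids the need to invoke (1).
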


\begin{proof}
%This is immediate from Lemma \ref{-goes-}.
(1) This follows from the contraposition of Claim \ref{claim1.22} (a),(b).  (2) By (1) and symmetry, we are left to show that $\alpha\otimes 1\in \ulTbCor$ if $\alpha\in \ulTbCor$, which is trivial from the definition of $\otimes$.
\end{proof}

\begin{remark}
This proposition means that $\ulTbCor$ is a ``two sided  $\otimes$-ideal'' in $\ulTCor$.
\end{remark}

\begin{remark}\label{rem-gp}
Let $\alpha \in \ulTCor(S,T)$ be a modulus correspondence.
If any component of $\alpha$ does not belong to $\ulTbCor$, then we say that $\alpha$ is \emph{in good position}.
However, this condition is not very nice, because the composition in $\ulTCor$ does not preserve it. We will introduce a stronger notion of ``very good position'' in \S \ref{section-gp}.
\end{remark}

\subsection{Reduced correspondences}

\begin{definition}\label{d3.1}
Define an additive category $\ulTredCor$ as the category whose objects are $\mathrm{Ob}(\ulTCor)$ and for any $S,T \in \ulTredCor$ the hom groups are given by 
\[
\ulTredCor (S,T) := \frac{\ulTCor(S,T)}{\ulTbCor(S,T)}.
\]

By Proposition \ref{tb-ideal}, the composition is well-defined and the tensor structure of $\ulTCor$ induces a tensor structure on $\ulTredCor$ via the canonical functor 
\begin{equation}\label{eq3.6}
\ul{\rho} : \ulTCor \to \ulTredCor.
\end{equation}
 We call $\ulTredCor$ the category of reduced correspondences.
\end{definition}
%{\color{red} \textbf{Question.} Does the $\otimes$-structure on $\ulTCor$ define that on $\ulTredCor$? I think yes but we have to check. }

Note that the Hom group $\ulTredCor (S,T)$ remains a free abelian group for any $S,T$.

\subsection{Correspondences in very good position}\label{section-gp}

\begin{definition}\label{d3.4}
Let $S,T \in \ulTCor$ and let $\alpha \in \ulTCor(S,T)$. We say that $\alpha$ is \emph{in very good position} if 
\[V \cap (|S^-| \times T^\o) \supset V \cap (S^\o \times |T^-|).\]
%the following condition holds: let $V$ be any component of $\alpha$, and $\ol{V} \subset \ol{S} \times \ol{T}$ the closure of $V$.
%{\color{red} Then we have the following equality of sets:
%\begin{equation}\label{eq:vg}
%\ol{V} \cap (|S^-| \times T^\o) = V \cap (S^\o \times |T^-|).
%\end{equation}}
We write $\ulTvgCor (S,T)$ be the free abelian group consisting of modulus correspondences in very good position.
\end{definition}

\begin{remark}\label{rem-3.27}
(1) By Lemma \ref{-goes-}, we always have the following inclusion:
\[V \cap (|S^-| \times T^\o) \subset V \cap (S^\o \times |T^-|).\]
Therefore, $V$ is in very good position if and only if we have equality.

(2) If $\alpha$ is in very good position, then it is in good position in the sense of Remark \ref{rem-gp}. Indeed, it is clear that $V \not\subset |S^-| \times T^\o$ for any component $V$ of $\alpha$.
\end{remark}

\begin{prop}\label{p3.2}
The composition in $\ulTCor$ preserves modulus correspondences in very good position.
\end{prop}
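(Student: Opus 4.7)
The strategy is to reduce to the case of elementary correspondences and then carry out a point-set argument using the description of composition from Proposition \ref{p-composition}.

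By additivity and the observation in Remark \ref{rem-3.27}(1) that very good position amounts to the equality
\[ V \cap (|S^-| \times T^\o) = V \cap (S^\o \times |T^-|) \]
of set-theoretic intersections, it suffices to show the following: if $\alpha \in \ulTCor(S,T)$ and $\beta \in \ulTCor(T,U)$ are elementary correspondences in very good position, then every elementary component $\gamma$ of $\beta \circ \alpha$ satisfies
\[ \gamma \cap (S^\o \times |U^-|) \subset |S^-| \times U^\o, \]
the reverse inclusion being automatic by Lemma \ref{-goes-} applied to $\gamma \in \ulTCor(S,U)$.

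The key input is the surjectivity statement recalled in Step 1 of the proof of Proposition \ref{p-composition}: the projection $S^\o \times T^\o \times U^\o \to S^\o \times U^\o$ induces a surjection $|\alpha \times U^\o \cap S^\o \times \beta| \twoheadrightarrow |\beta \circ \alpha|$ on topological spaces. Given any (topological) point $(s,u) \in \gamma \cap (S^\o \times |U^-|)$, this surjectivity produces $t \in T^\o$ with $(s,t) \in |\alpha|$ and $(t,u) \in |\beta|$. Since $\beta$ is in very good position and $u \in |U^-|$, we obtain $t \in |T^-|$; since $\alpha$ is in very good position and $t \in |T^-|$, we obtain $s \in |S^-|$. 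Hence $(s,u) \in |S^-| \times U^\o$, as required.

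No step is truly hard here; the main thing to verify is that the lifting actually gives a $T^\o$-valued intermediate coordinate, which is immediate from the fact that $|\alpha \times U^\o \cap S^\o \times \beta| \subset S^\o \times T^\o \times U^\o$. No dimension count or multiplicity analysis is required, since very good position is purely a set-theoretic condition on the supports.
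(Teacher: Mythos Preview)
Your proof is correct and follows essentially the same approach as the paper: lift from $\gamma$ to the triple product $S^\o \times T^\o \times U^\o$ via the surjection $|\alpha \times U^\o \cap S^\o \times \beta| \to |\beta \circ \alpha|$, then chain the very-good-position conditions on $\beta$ and $\alpha$. The paper fixes a single component $\gamma'$ dominating $\gamma$ and argues with set-level inclusions on $\gamma'$, whereas you argue point by point; the content is the same.
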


\begin{proof}
Let $T_1,T_2,T_3 \in \ulTCor$, and let $\alpha \in \ulTCor(T_1,T_2)$ and $\beta \in \ulTCor(T_2,T_3)$ be modulus correspondences in very good position. We want to prove that $\beta \circ \alpha$ is in very good position. Clearly we may assume that $\alpha$ and $\beta$ are integral cycles. Let $\gamma$ be any component of $\beta \circ \alpha$. By Remark \ref{rem-3.27}, it suffices to prove that $\gamma \cap (|T_1^-| \times T^\o_3) \supset \gamma \cap (T^\o_1 \times |T_3^-|)$.
As in Step 1 of the proof of Proposition \ref{p-composition}, we can find a component $\gamma'$ of $\alpha \times T_3^\o \cap T_1^\o \times \beta$ such that $\gamma'$ maps surjectively onto $\gamma$. Thus it suffices to prove that $\gamma' \cap (|T_1^-| \times T^\o_2 \times T^\o_3) \supset \gamma' \cap (T^\o_1 \times T^\o_2 \times |T_3^-|)$. The assumption that $\alpha$ and $\beta$ are in very good position implies 
\begin{align*}
(\alpha \times T^\o_3) \cap (|T_1^-| \times T^\o_2 \times T^\o_3) &\supset (\alpha \times T^\o_3) \cap (T^\o_1 \times |T_2^-| \times T^\o_3),\\
(T^\o_1 \times \beta) \cap (T^\o_1 \times |T_2^-| \times T^\o_3) &\supset (T^\o_1 \times \beta) \cap (T^\o_1 \times T^\o_2 \times |T_3^-|).
\end{align*}
These two inclusions immediately implies the desired one. 
%This finishes the proof.
\end{proof}

\begin{definition}
For any $S,T \in \ulTCor$, let $I(S,T)$ be the free abelian group generated by elementary correspondences $V \in \ulTCor(S,T)$ such that $V \notin \ulTvgCor(S,T)$. By definition, we have an evident equality 
\[
\ulTCor(S,T) = \ulTvgCor(S,T) \oplus I(S,T).
\] 
\end{definition}

\begin{prop}\label{p3.4}
Let $T_1,T_2,T_3 \in \ulTCor$, and let $\alpha \in \ulTCor(T_1,T_2)$ and $\beta \in \ulTCor(T_2,T_3)$. Assume that $\alpha \in I(T_1,T_2)$. Then we have $\beta \circ \alpha \in I(T_1,T_3)$.
In particular, we obtain a subcategory $I$ of $\ulTCor$.
\end{prop}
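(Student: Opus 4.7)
By $\Z$-bilinearity of composition and the definition of $I(T_1,T_2)$ as the free abelian group on elementary modulus correspondences not in very good position, it suffices to show: if $\alpha \in \ulTCor(T_1,T_2)$ is an elementary cycle with $\alpha \notin \ulTvgCor(T_1,T_2)$ and $\beta \in \ulTCor(T_2,T_3)$ is elementary, then every irreducible component $\gamma$ of $\beta \circ \alpha$ appearing with nonzero multiplicity is not in very good position. From the hypothesis and Remark \ref{rem-3.27}(1), the inclusion $\alpha \cap (|T_1^-| \times T_2^\o) \subset \alpha \cap (T_1^\o \times |T_2^-|)$ is strict, so there exists a point $p = (t_1,t_2) \in \alpha$ with $t_1 \notin |T_1^-|$ and $t_2 \in |T_2^-|$, which will be our witness.

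Next, I would lift $\gamma$ to an irreducible component $\gamma' \subset (\alpha \times T_3^\o) \cap (T_1^\o \times \beta) = \alpha \times_{T_2^\o} \beta$ in $T_1^\o \times T_2^\o \times T_3^\o$ mapping surjectively onto $\gamma$, exactly as in Step 1 of the proof of Proposition \ref{p-composition}. The crux is a dimension count: since $\beta \to T_2^\o$ is finite, its base change $\alpha \times_{T_2^\o} \beta \to \alpha$ is also finite, so $\dim \gamma' \leq \dim \alpha$. Because $\gamma$ appears with nonzero multiplicity in $\beta \circ \alpha$, the projection $\gamma' \to \gamma$ is generically finite, forcing $\dim \gamma' = \dim \gamma = \dim \alpha$ (the last equality holding since $\gamma$ must be finite and surjective over the component of $T_1^\o$ dominated by $\alpha$). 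Then $\gamma' \to \alpha$ is a finite morphism between integral schemes of equal dimension, whose image is an irreducible closed subset of $\alpha$ of full dimension, hence equals all of $\alpha$; so $\gamma' \to \alpha$ is surjective.

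By this surjectivity, the witness $p \in \alpha$ lifts to some point $x = ((t_1,t_2),(t_2,t_3)) \in \gamma'$. Applying Lemma \ref{-goes-} to $\beta$ at $(t_2,t_3) \in \beta$, the condition $t_2 \in |T_2^-|$ forces $t_3 \in |T_3^-|$. Projecting $x$ to $T_1^\o \times T_3^\o$ yields a point $(t_1,t_3) \in \gamma$ with $t_1 \notin |T_1^-|$ and $t_3 \in |T_3^-|$, witnessing that $\gamma \cap (|T_1^-| \times T_3^\o) \subsetneq \gamma \cap (T_1^\o \times |T_3^-|)$. Hence $\gamma \notin \ulTvgCor(T_1,T_3)$, so $\beta \circ \alpha \in I(T_1,T_3)$. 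The ``in particular'' statement follows from closure of $I$ under composition, understood as a subquiver closed under composition (the identities of $\ulTCor$ belong to $\ulTvgCor$, not to $I$). The main obstacle is the dimension-theoretic argument ensuring surjectivity of $\gamma' \to \alpha$: one must carefully verify that components of $\alpha \times_{T_2^\o} \beta$ not dominating $\alpha$ have strictly smaller dimension and therefore push forward to zero cycles under $\pi_{13}$, so that only components dominating $\alpha$ contribute to $\beta \circ \alpha$.
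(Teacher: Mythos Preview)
Your proof is correct and follows essentially the same strategy as the paper: lift to a component $\gamma'$ of $(\alpha \times T_3^\o) \cap (T_1^\o \times \beta)$, establish that $\gamma' \to \alpha$ is surjective, and then use Lemma \ref{-goes-} applied to $\beta$ to transport information from $T_2^-$ to $T_3^-$.

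The packaging differs in two ways. First, the paper argues by contrapositive with set inclusions (if some $\gamma$ is in very good position, deduce $\alpha$ is), whereas you argue directly with a single witness point; these are logically equivalent. Second, for the surjectivity of $\gamma' \to \alpha$ the paper invokes Claim \ref{claim1.22}(c), which is stated under the hypothesis that $\gamma$ is not contained in the fiber of $|T_3^-|$ (verified there because $\gamma$ is assumed in very good position). Your self-contained dimension count is in fact the content of the proof of Claim \ref{claim1.22}(c), and it does not require that extra hypothesis, so your argument handles the case $\gamma \subset T_1^\o \times |T_3^-|$ uniformly rather than as a separate (trivial) case.

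One minor wording issue: the sentence ``Because $\gamma$ appears with nonzero multiplicity in $\beta \circ \alpha$, the projection $\gamma' \to \gamma$ is generically finite'' is not the right logical chain. The correct reasoning is that $\gamma'$ is chosen with generic point over the generic point of $\gamma$, so $\gamma' \to \gamma$ is dominant, giving $\dim \gamma' \geq \dim \gamma = \dim \alpha$; combined with $\dim \gamma' \leq \dim \alpha$ from finiteness of $\gamma' \to \alpha$, equality follows. The nonzero-multiplicity remark is not needed (it is automatic once $\gamma$ is a component of the support of $\beta\circ\alpha$), and your concluding paragraph about components not dominating $\alpha$ is superfluous to the argument as written.
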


\begin{proof}
Clearly we may assume that $\alpha$ and $\beta$ are integral cycles. 
Assume that $\beta \circ \alpha \notin I(T_1,T_3)$. It suffices to prove that $\alpha \notin I(T_1,T_2)$, i.e., $\alpha$ is in very good position. By assumption, there exists a component $\gamma$ of $\beta \circ \alpha$ which is in very good position. As in Claim \ref{claim1.22} (c), we can find a component $\gamma'$ of $\alpha \times T_3^\o \cap T_1^\o \times \beta$ which maps surjectively onto both $\alpha$ and $\gamma$. The condition that $\gamma$ is in very good position implies
\begin{equation}\label{eq3.10}
\gamma' \cap (|T_1^-| \times T^\o_2 \times T^\o_3) \supset \gamma' \cap (T^\o_1 \times T^\o_2 \times |T_3^-|).
\end{equation}
On the other hand, Lemma \ref{-goes-} implies 
\[
(T^\o_1 \times \beta) \cap (T^\o_1 \times |T_2^-| \times T^\o_3) \subset (T^\o_1 \times \beta) \cap (T^\o_1 \times T^\o_2 \times |T_3^-|),
\]
hence, noting that $\gamma' \subset T^\o_1 \times \beta$, we have
\begin{equation}\label{eq3.11}
\gamma' \cap (T^\o_1 \times |T_2^-| \times T^\o_3) \subset \gamma' \cap (T^\o_1 \times T^\o_2 \times |T_3^-|).
\end{equation}
By \eqref{eq3.10} and \eqref{eq3.11}, we have
\[
\gamma' \cap (|T_1^-| \times T^\o_2 \times T^\o_3) \supset \gamma' \cap (T^\o_1 \times |T_2^-| \times T^\o_3).
\]
Since $\gamma' \to \alpha$ is surjective, we have 
\[
\alpha \cap (|T_1^-| \times T^\o_2) \supset \alpha \cap (T^\o_1 \times |T_2^-|).
\]
This shows that $\alpha$ is in very good position. 
%This finishes the proof.
\end{proof}

\begin{definition}
Define $\ulTvgCor$ as the subcategory of $\ulTCor$ such that the objects are interiorly smooth triples and the morphisms are modulus correspondences in very good position.
\end{definition}

\subsection{Correspondences in excellent position}

\begin{defn}\label{d2.3} Let $S,T \in \ulTCor$ and let $\alpha \in \ulTCor(S,T)$. We say that $\alpha$ is \emph{in excellent position} if, for any irreducible component $V$ of $\alpha$, we have the inclusion
\begin{equation}\label{eq3.16}
 \ol{V}\cap  (|S^-|\times \ol{T}) \supseteq \ol{V}\cap(\ol{S}\times |T^-|),
\end{equation}
or equivalently, 
\begin{equation}\label{eq:vvg}
|S^-|\times \ol{T} \supseteq \ol{V}\cap(\ol{S}\times |T^-|),
 \end{equation}
\end{defn}

\begin{lemma}\label{l3.6} If $V$ is in excellent position, it is in very good position (Definiiton \ref{d3.4}).
\end{lemma}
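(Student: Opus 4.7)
The plan is to deduce the very-good-position inclusion from the excellent-position inclusion by simply intersecting with $S^{\circ}\times T^{\circ}$ and using that $V = \ol{V}\cap (S^{\circ}\times T^{\circ})$.

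More concretely, I would argue as follows. Start from the defining inclusion for excellent position,
\[
\ol{V}\cap (|S^-|\times \ol{T}) \supseteq \ol{V}\cap(\ol{S}\times |T^-|),
\]
and intersect both sides with the open subset $S^{\circ}\times T^{\circ}\subset \ol{S}\times \ol{T}$. Since $V$ is an elementary finite correspondence from $S^{\circ}$ to $T^{\circ}$, its closure in $\ol{S}\times \ol{T}$ satisfies $V=\ol{V}\cap(S^{\circ}\times T^{\circ})$, so the left-hand side becomes
\[
V\cap (|S^-|\times \ol{T}) \;=\; V\cap \bigl((|S^-|\cap S^{\circ})\times T^{\circ}\bigr)\;=\; V\cap (|S^-|\times T^{\circ}),
\]
(all three sets agree because $V\subseteq S^{\circ}\times T^{\circ}$). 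Likewise the right-hand side becomes $V\cap (\ol{S}\times |T^-|) = V\cap (S^{\circ}\times |T^-|)$. The resulting inclusion
\[
V\cap (|S^-|\times T^{\circ}) \supseteq V\cap (S^{\circ}\times |T^-|)
\]
is exactly the condition of Definition~\ref{d3.4}, so $V$ is in very good position.

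There is no real obstacle here: the statement is essentially a matter of unwinding two definitions and observing that intersection with $S^{\circ}\times T^{\circ}$ converts the ``closed'' version of the inclusion into the ``open'' one. The only point to be careful about is the bookkeeping which shows that $V\cap(|S^-|\times \ol{T})=V\cap(|S^-|\times T^{\circ})$, and similarly on the other side; both identities are immediate from $V\subseteq S^{\circ}\times T^{\circ}$. (By Lemma~\ref{-goes-} the reverse inclusion is automatic, and hence excellent position in fact yields equality, matching Remark~\ref{rem-3.27}~(1).)
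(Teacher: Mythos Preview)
Your proof is correct and follows essentially the same approach as the paper: intersect the excellent-position inclusion with $V$ (equivalently, with $S^\o\times T^\o$, since $V=\ol{V}\cap(S^\o\times T^\o)$) and then use $V\subseteq S^\o\times T^\o$ to simplify both sides.
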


\begin{proof} Intersecting \eqref{eq3.16} with $V$, we get the inclusions
\[ V\cap  (|S^-|\times \ol{T}) \supseteq V\cap(\ol{S}\times |T^-|)\supseteq V\cap(S^\o\times |T^-|),
\]
but since $V\subseteq S^\o\times T^\o$, the left hand side equals $V\cap  (|S^-|\times T^\o)$.
\end{proof}

\begin{prop} The condition ``in excellent position'' is stable under composition, hence (by Lemma \ref{l3.6}) a subcategory $\ulTvvgCor\subset \ulTvgCor$.
\end{prop}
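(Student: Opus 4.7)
The plan is to prove stability of excellent position under composition along the same lines as Proposition \ref{p3.2} and Proposition \ref{p-composition}, by analysing the composite through an intermediate variety in the triple product $\ol{S}\times\ol{T}\times\ol{U}$. So let $\alpha\in\ulTCor(S,T)$ and $\beta\in\ulTCor(T,U)$ be in excellent position, and let $\gamma$ be an arbitrary irreducible component of $\beta\circ\alpha$; by additivity I may assume $\alpha$, $\beta$ integral. I want to verify \eqref{eq3.16} for $\ol{\gamma}\subseteq \ol{S}\times\ol{U}$, namely
\[
\ol{\gamma}\cap(\ol{S}\times|U^-|)\subseteq \ol{\gamma}\cap(|S^-|\times\ol{U}).
\]

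As in \textbf{Step 1} of the proof of Proposition \ref{p-composition}, pick an irreducible component $\gamma'$ of $\alpha\times U^\o\cap T^\o\times\beta$ whose generic point lies over that of $\gamma$, and let $\ol{\gamma}'\subseteq \ol{S}\times\ol{T}\times\ol{U}$ be its closure. Then the projection $p_{13}\colon\ol{\gamma}'\to\ol{\gamma}$ is proper and surjective, and by construction $\ol{\gamma}'\subseteq \ol{\alpha}\times\ol{U}$ and $\ol{\gamma}'\subseteq \ol{S}\times\ol{\beta}$.

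The key step is to chain the excellent-position inclusions for $\alpha$ and $\beta$ through $\ol{\gamma}'$. The excellent position of $\beta$, tensored on the left by $\ol{S}$, yields
\[
\ol{\gamma}'\cap(\ol{S}\times\ol{T}\times|U^-|)\subseteq \ol{\gamma}'\cap(\ol{S}\times|T^-|\times\ol{U}),
\]
while the excellent position of $\alpha$, tensored on the right by $\ol{U}$, yields
\[
\ol{\gamma}'\cap(\ol{S}\times|T^-|\times\ol{U})\subseteq \ol{\gamma}'\cap(|S^-|\times\ol{T}\times\ol{U}).
\]
Combining these gives
\[
\ol{\gamma}'\cap p_{13}^{-1}(\ol{S}\times|U^-|)\subseteq \ol{\gamma}'\cap p_{13}^{-1}(|S^-|\times\ol{U}).
\]
Applying $p_{13}$, which is surjective, and using the elementary set-theoretic projection formula $p_{13}(\ol{\gamma}'\cap p_{13}^{-1}(Z))=p_{13}(\ol{\gamma}')\cap Z=\ol{\gamma}\cap Z$ for any closed $Z\subseteq \ol{S}\times\ol{U}$, one gets the required inclusion on $\ol{\gamma}$. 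The inclusion $\ulTvvgCor\subseteq\ulTvgCor$ is then Lemma \ref{l3.6}.

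I don't anticipate a real obstacle; the only thing to double-check is the symmetric compatibility of the two one-sided inclusions above with products, but both are trivial from taking the defining inclusions \eqref{eq3.16} for $\alpha$ and $\beta$ and intersecting with $\ol{U}$ or $\ol{S}$ on the appropriate factor. The potentially subtle point, namely that $\ol{\gamma}'\to \ol{\gamma}$ is surjective (so that the projection formula transfers the inclusion without loss), is already handled in \textbf{Step 1} of the proof of Proposition \ref{p-composition}.
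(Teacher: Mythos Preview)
Your proof is correct and follows essentially the same approach as the paper's own proof: both choose an irreducible component $\gamma'$ of $(\alpha\times U^\o)\cap(S^\o\times\beta)$ lying over $\gamma$, use the inclusions $\ol{\gamma}'\subseteq\ol{\alpha}\times\ol{U}$ and $\ol{\gamma}'\subseteq\ol{S}\times\ol{\beta}$ to chain the excellent-position conditions for $\alpha$ and $\beta$ through the triple product, and then push down via the proper surjective map $\ol{\gamma}'\to\ol{\gamma}$ using the set-theoretic projection formula. (Note the small typo: you wrote ``$\alpha\times U^\o\cap T^\o\times\beta$'' where you meant ``$\alpha\times U^\o\cap S^\o\times\beta$'', but your subsequent use of $\ol{\gamma}'\subseteq\ol{S}\times\ol{\beta}$ shows you have the right object in mind.)
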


\begin{proof} 
 It is essentially the same as that of Proposition \ref{p3.2}, but a little more involved:

Let $\alpha \in \ulTCor (S,T)$ and $\beta \in \ulTCor (T,U)$ be finite correspondences whose irreducible components satisfy \eqref{eq:vvg}.
We may assume that $\alpha$ and $\beta$ are integral cycles. 
Consider the intersection $(\alpha \times U^\o) \cap (S^\o \times \beta) \subset S^\o \times T^\o \times U^\o$, and let $\gamma$ be any irreducible component of it.
Then, by the construction of the composition of finite correspondences, any irreducible component of $\beta \circ \alpha$ is of the form $\pi (\gamma)$ for some $\gamma$, where $\pi : S^\o \times T^\o \times U^\o \to S^\o \times U^\o$ denotes the projection. 
Therefore, it suffices to show that $\pi (\gamma)$ satisfies \eqref{eq:vvg}. 

Let $\ol{\alpha} \subset \ol{S} \times \ol{T}$,  $\ol{\beta} \subset \ol{T} \times \ol{U}$, $\ol{\gamma} \subset \ol{S} \times \ol{T} \times \ol{U}$ and $\ol{\pi (\gamma)} \subset \ol{S} \times \ol{U}$ be the closures.
Then the projection $\ol{\pi} : \ol{S} \times \ol{T} \times \ol{U} \to \ol{S} \times \ol{U}$ induces a proper dominant (hence surjective) morphism $\ol{\gamma} \to \ol{\pi (\gamma)}$, since $\ol{\gamma} \subset \ol{\alpha} \times \ol{U}$ is a closed immersion and $\ol{\alpha}$ is proper over $\ol{S}$ (by the left properness of $\alpha$). 

Since $\alpha$ and $\beta$ satisfy  \eqref{eq:vvg} by assumption, we have 
\begin{align*}
|S^-| \times \ol{T} &\supseteq \ol{\alpha} \cap (\ol{S} \times |T^-|), \\
|T^-| \times \ol{U} &\supseteq \ol{\beta} \cap (\ol{T} \times |U^-|).
\end{align*}
Pulling these back onto $\ol{S} \times \ol{T} \times \ol{U}$, we obtain 
\begin{align*}
|S^-| \times \ol{T} \times \ol{U} &\supseteq (\ol{\alpha} \times \ol{U}) \cap (\ol{S} \times |T^-| \times \ol{U}), \\
\ol{S} \times |T^-| \times \ol{U} &\supseteq (\ol{S} \times \ol{\beta}) \cap (\ol{S} \times \ol{T} \times |U^-|),
\end{align*}
and hence
\begin{align*}
|S^-| \times \ol{T} \times \ol{U} &\supseteq (\ol{\alpha} \times \ol{U}) \cap (\ol{S} \times |T^-| \times \ol{U}) \\
&\supseteq (\ol{\alpha} \times \ol{U}) \cap (\ol{S} \times \ol{\beta}) \cap (\ol{S} \times \ol{T} \times |U^-|) \\
&\supseteq \ol{\gamma} \cap (\ol{S} \times \ol{T} \times |U^-|).
\end{align*}
Since $\ol{\gamma} \to \ol{\pi (\gamma)}$ is surjective, the above inclusion implies 
\[
|S^-| \times \ol{U} \supseteq \ol{\pi (\gamma)} \cap (\ol{S} \times |U^-|),
\]
which shows that $\pi (\gamma)$ satisfies  \eqref{eq:vvg}, as desired.
\end{proof}

Note that the fully faithful functor $\ul{\psi} : \ulMCor \to \ulTCor$ of \eqref{eq3.5} restricts to a fully faithful functor ${}^{\mathrm{vvg}}\ul{\psi} : \ulMCor \to \ulTvvgCor$.

For any modulus triple $T$, we set
\[
T^{\o\o} := \ol{T} \setminus (|T^+| \cup |T^-|).
\]

\begin{prop} \label{p3.3}
There exists a functor $\ul{g} : \ulTvvgCor \to \ulMCor$ which assigns to a modulus triple $T$ a modulus pair 
\[
\ul{g}(T) := (\ol{T} \setminus |T^-|, T^+ |_{\ol{T} \setminus |T^-|}).
\]
\end{prop}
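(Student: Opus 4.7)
My plan is to construct $\ul g$ by sending an object $T \in \ulTvvgCor$ to the modulus pair $\ul g(T) = (\ol T \setminus |T^-|, T^+|_{\ol T \setminus |T^-|})$, and an elementary correspondence $V \in \ulTvvgCor(S, T)$ to the cycle
\[\ul g(V) := V \cap (S^{\o\o} \times T^{\o\o}),\]
extended $\Z$-linearly. For the object assignment, the interior of $\ul g(T)$ equals $T^{\o\o}$, an open subscheme of the smooth scheme $T^\o$, hence smooth; so $\ul g(T) \in \ulMCor$.

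First, I would verify that $\ul g(V)$ defines an element of $\Cor(S^{\o\o}, T^{\o\o})$. The key input from the excellent position inequality \eqref{eq:vvg} is that any point $(s, t) \in V$ with $s \notin |S^-|$ and $t \in |T^-|$ would lie in $\ol V \cap (\ol S \times |T^-|) \subseteq |S^-| \times \ol T$, a contradiction; hence $V \cap (S^{\o\o} \times T^\o) \subseteq S^{\o\o} \times T^{\o\o}$, so $\ul g(V)$ equals $V \times_{S^\o} S^{\o\o}$, which is finite over $S^{\o\o}$.

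Next I would verify the modulus condition. Assume $\ul g(V) \neq 0$, and let $\tilde V$ denote its closure in $(\ol S \setminus |S^-|) \times (\ol T \setminus |T^-|)$. By the same argument from \eqref{eq:vvg}, the open subset $\ol V \cap ((\ol S \setminus |S^-|) \times \ol T)$ of $\ol V$ is already contained in $(\ol S \setminus |S^-|) \times (\ol T \setminus |T^-|)$; since it contains $\ul g(V)$ as a dense subset, it coincides with $\tilde V$. Thus $\tilde V$ is proper over $\ol S \setminus |S^-|$ by base change from $\ol V \to \ol S$. Moreover, $\tilde V^N$ is an open subscheme of $\ol V^N$ lying above $\ol T \setminus |T^-|$, so restricting the modulus inequality
\[S^+|_{\ol V^N} + T^-|_{\ol V^N} \geq S^-|_{\ol V^N} + T^+|_{\ol V^N}\]
(Lemma \ref{general-mod}) to $\tilde V^N$ annihilates the $S^-$ and $T^-$ terms, yielding the required $S^+|_{\tilde V^N} \geq T^+|_{\tilde V^N}$.

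Finally, functoriality should be formal: identities are preserved by inspection, and for a composable pair $S \by{\alpha} T \by{\beta} U$, both $\ul g(\beta) \circ \ul g(\alpha)$ and $\ul g(\beta \circ \alpha)$ sit in $\Cor(S^{\o\o}, U^{\o\o})$; since the composition of finite correspondences is computed by intersection in the triple product followed by pushforward, and since the open immersion $S^{\o\o} \times T^{\o\o} \times U^{\o\o} \hookrightarrow S^\o \times T^\o \times U^\o$ is flat and compatible with the corresponding projections, the cycle-theoretic operations commute with the restriction defining $\ul g$, so the two composites agree. The main obstacle will be the properness of $\tilde V$ in the third step: this needs the full strength of \eqref{eq:vvg}, since neither good nor very good position (Remark \ref{rem-gp}, Definition \ref{d3.4}) controls $\ol V$ enough to rule out limit points in $(\ol S \setminus |S^-|) \times |T^-|$. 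Once this is established, the modulus condition and functoriality follow with minimal extra work.
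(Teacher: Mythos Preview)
Your proposal is correct and follows essentially the same route as the paper: define $\ul g(V)=V\cap(S^{\o\o}\times T^{\o\o})$, use (very good) position to identify it with $V\times_{S^\o}S^{\o\o}$, use \eqref{eq:vvg} to show that $\ol V\cap((\ol S\setminus|S^-|)\times\ol T)$ already lies in $(\ol S\setminus|S^-|)\times(\ol T\setminus|T^-|)$ and hence is proper over $\ol S\setminus|S^-|$, restrict the modulus inequality, and conclude functoriality from compatibility of the cycle-theoretic operations with open restriction. One small point: before invoking Lemma~\ref{general-mod} you should note explicitly that $V\notin\ulTbCor(S,T)$ (this follows from Remark~\ref{rem-3.27}~(2), since excellent implies very good position), so that $T^-|_{\ol V}$ is defined; the paper makes this observation in passing.
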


\begin{proof} Let $\alpha \in \ulTvvgCor(S,T)$ be an elementary modulus correspondence in excellent position. 
Define $\ul{g}(\alpha) := \alpha \cap (S^{\o\o} \times T^{\o\o})$. Since $\alpha$ is in very good position, we have $\ul{g}(\alpha) = \alpha \times_{S^\o} S^{\o\o}$, therefore $\alpha_0$ is finite and surjective over a component of $S^{\o\o}$. We will show that $\ul{g}(\alpha ) \in \ulMCor(\ul{g}(S),\ul{g}(T))$.

First we check that $\ul{g}(\alpha)$ satisfies the left properness condition. Let $\ol{\alpha} \subset \ol{S} \times \ol{T}$ and $\ol{\ul{g}(\alpha)} \subset (\ol{S} \setminus |S^-|) \times (\ol{T} \setminus |T^-|)$ be the closures. 
We have to prove that $\ol{\ul{g}(\alpha)}$ is proper over $\ol{S} \setminus |S^-|$.
Since the generic points of $\ol{\alpha}$ and $\ol{\ul{g}(\alpha)}$ are the same, we have $\ol{\ul{g}(\alpha)} = \ol{\alpha} \cap (\ol{S} \setminus |S^-|) \times (\ol{T} \setminus |T^-|)$.
Moreover, since $\ol{\alpha}$ is proper over $\ol{S}$, we know that $\ol{\alpha} \cap (\ol{S} \setminus |S^-|) \times \ol{T}$ is proper over $\ol{S} \setminus |S^-|$. Therefore, it suffices to show that $\ol{\alpha} \cap (\ol{S} \setminus |S^-|) \times \ol{T}$ is already contained in $(\ol{S} \setminus |S^-|) \times (\ol{T} \setminus |T^-|)$. But this is an immediate consequence of \eqref{eq:vvg}.

Next we show that $\ul{g}(\alpha)$ satisfies the modulus condition. 
Let $\ol{\alpha} \subset \ol{S} \times \ol{T}$ be the closure of $\alpha$ and $\ol{\alpha}^N$ its normalization. Then the modulus condition on $\alpha$ implies (noting that $\alpha \notin \ulTbCor(S,T)$) that 
\[
S^+ |_{\ol{\alpha}^N} + T^- |_{\ol{\alpha}^N} \geq S^- |_{\ol{\alpha}^N} + T^+ |_{\ol{\alpha}^N}.
\]
Let $\ol{\ul{g}(\alpha)} \subset (\ol{S} \setminus |S^-|) \times (\ol{T} \setminus |T^-|)$ be the closure of $\ul{g}(\alpha)$ and $\ol{\ul{g}(\alpha)}^N$ its normalization. Then the above inequality implies
\[
S^+ |_{\ol{\ul{g}(\alpha)}^N}  \geq T^+ |_{\ol{\ul{g}(\alpha)}^N},
\]
which shows that $\ul{g}(\alpha ) \in \ulMCor(\ul{g}(S),\ul{g}(T))$.
%{\color{red} However, the closure $\ol{\ul{g}(\alpha)} \subset (\ol{S} \setminus |S^-|) \times (\ol{T} \setminus |T^-|)$ is not always proper over $\ol{S} \setminus |S^-|$. Example: the identity map $(\P^1,\infty, \emptyset)\to (\P^1,\emptyset,\infty)$.}

We define $\ul{g}(\alpha) \in \ulMCor(\ul{g}(S),\ul{g}(T))$ for any $\alpha \in \ulTvgCor (S,T)$ by linearly extending the above construction.

Finally we check that the association $\alpha \mapsto \ul{g}(\alpha)$ is compatible with the composition of $\ulTvgCor$ and $\ulMCor$. Let $\beta \in \ulTvgCor(T,U)$ for some modulus triples $U$. We want to prove that $\ul{g}(\beta \circ \alpha) = \ul{g}(\beta) \circ \ul{g}(\alpha)$. Recall that the composition of finite correspondences is defined by using the intersection product and the pushforward of algebraic cycles. Then the compatibility of them with the restriction to open subsets implies the desired equality. %This finishes the proof.
\end{proof}

%\begin{comment}

\begin{prop}\label{p3.5}
The functor $\ul{g}$ is right adjoint to ${}^{\mathrm{vvg}}\ul{\psi}$.
\end{prop}

\begin{proof}
First note that for any $T \in \ulTvvgCor$ the natural inclusion $j : T^{\o\o} \to T^\o$ defines a morphism ${}^{\mathrm{vvg}}\ul{\psi}\ul{g}(T) \to T$ in $\ulTvvgCor$. Indeed, noting that $j$ extends to a morphism $\ol{T} \setminus |T^-| \to \ol{T}$, it is easy to check that the graph $\Gamma_j$ of $j$ belongs to $\ulTCor({}^{\mathrm{vvg}}\ul{\psi}\ul{g}(T), T)$. Moreover, $\Gamma_j$ does not intersect with $T^{\o\o} \times |T^-|$. Therefore, we have $\Gamma_j \in \ulTvvgCor({}^{\mathrm{vvg}}\ul{\psi}\ul{g}(T), T)$.
For any $M \in \ulMCor$ and $T \in \ulTvvgCor$, consider the map of abelian groups
\[
\ulMCor(M,\ul{g}(T)) \simeq \ulTvvgCor ({}^{\mathrm{vvg}}\ul{\psi}M,{}^{\mathrm{vvg}}\ul{\psi}\ul{g}(T)) \xrightarrow{\Gamma_j \circ -} \ulTvvgCor ({}^{\mathrm{vvg}}\ul{\psi}M,T).
\]
It suffices to prove that this is an isomorphism. The injectivity is obvious. We prove the surjectivity. Let $V \in \ulTvvgCor ({}^{\mathrm{vvg}}\ul{\psi}M,T)$ be any integral cycle. It suffices to prove $V \in \ulMCor (M,\ul{g}(T))$.

The assumption that $V$ is in very good position implies
\[
\emptyset = V \cap (|({}^{\mathrm{vvg}}\ul{\psi}M)^-| \times T^\o) = V \cap (M^\o \times |T^-|),
\] 
where the first equality follows from $({}^{\mathrm{vvg}}\ul{\psi}M)^- = \emptyset$. Therefore we have $V \in \Cor (M^\o,T^{\o\o}) = \Cor (M^\o,\ul{g}(T)^\o)$. 

Next we check the left properness of $V$.
Let $\ol{V} \subset \ol{M} \times \ol{T}$ be the closure. Since $V$ is in excellent position, we have 
\[
|({}^{\mathrm{vvg}}\ul{\psi}M)^-| \times \ol{T} \supseteq \ol{V} \cap (\ol{M} \times |T^-|).
\]
Since $({}^{\mathrm{vvg}}\ul{\psi}M)^- = \varnothing$ by definition, the above inclusion shows $\ol{V} \cap (\ol{M} \times |T^-|) = \varnothing$. In other words, $\ol{V} \subset \ol{M} \times (\ol{T} \setminus |T^-|) = \ol{M} \times \ol{\ul{g}(T)}$. This shows the left properness of $V$. 

Finally, we check the modulus condition. Let $\ol{V}^N \to \ol{V}$ be the normalization. Then the assumption that $V \in \ulTvvgCor ({}^{\mathrm{vvg}}\ul{\psi}M,T)$ implies 
\[
M^\infty |_{\ol{V}^N} + T^- |_{\ol{V}^N} \geq T^+ |_{\ol{V}^N} .
\]
By restricting this inequality over $\ol{V} \cap \ol{M} \times (\ol{T} \setminus |T^-|)$, we are done.
\end{proof}
%\end{comment}

\begin{remark} The identity map $(\P^1,\infty, \emptyset)\to (\P^1,\emptyset,\infty)$ is in very good position, but not in excellent position; its putative image under the functor $\ul{g}$ of Proposition \ref{p3.3} does not satisfy the properness condition.  So this functor does not extend to $ \ulTvgCor $. This motivates the notion of ``excellent position''.

On the other hand, the obvious analogue of Proposition \ref{p3.4} fails if we replace ``very good position'' by ``excellent position''. For example, composing the previous morphism with the morphism $(\P^1,2\infty,\infty)\allowbreak\to (\P^1,\infty,\emptyset)$ with support the identity, we get a morphism in excellent position. Note that the latter is an isomorphism in $\ulTCor$ (see example \ref{ex2.1}).
\end{remark}

%\subsection{Recapitulation} {\color{blue} This subsection should summarise a graph of the various categories and functors, as much as is feasible :) }

\section{Comparisons}\label{s4}
\subsection{Comparison with the categories of Ivorra-Yamazaki}
In \cite[4.5]{iy2}, Ivorra and Yamazaki introduce a quiver (``category without composition'') which is closely related to what we have done so far; they had previously introduced a full subquiver  $\olMCrv$ in \cite[3.2]{iy}. The aim of this subsection is to relate our theory with theirs. By  \cite[Rk 17]{iy2} their quivers fail to yield categories (in the sense that morphisms cannot be naturally composed), but this is in fact a minor issue and we shall modify their definition trivially in order to solve it.

\begin{defn}\label{d3.3} The \emph{Ivorra-Yamazaki category} $\IY$ is defined as follows: 
\begin{description}
\item[Objects]  triples $(X,Y,Z)$, where $X$ is a smooth projective connected $k$-variety and $Y, Z$ are effective divisors on $X$ such that $|Y|\cap |Z| = \emptyset$ and such that $(Y +Z)_\red$ is a simple normal crossing divisor.
\item[Morphisms] a morphism $(X,Y,Z)\to (X',Y',Z')$ is a %dominant\footnote{This condition is skipped in loc. cit.} 
morphism $f:X\to X'$ satisfying 
\begin{enumerate}
\item If $f(X)\subseteq  |Y']$: no condition. (This is how we modify the definition of \cite[4.5]{iy2}.)
\item Otherwise:
\begin{enumerate}
\item $f(X \setminus |Z|) \subseteq X' \setminus |Z'|$
\item $Y\le f^*Y'$
\item $Z - Z_\red \ge f^*(Z' - Z'_\red)$.
\end{enumerate}
%\begin{enumerate}
%\item $Z_\red \ge (f^*Z')_\red$; equivalently, $f(X \setminus |Z|) \subseteq X' \setminus |Z'|$

%and the following modulus conditions:
%\item If $f(X)\subseteq  |Y']\cup |Z'|$: no condition. (This is how we modify the definition of \cite[4.5]{iy2}.)
%\item Otherwise:
%\begin{enumerate}
%\item $Y\le f^*Y'$
%\item $Z - Z_\red \ge f^*(Z' - Z'_\red)$.
%\item $Z_\red \ge (f^*Z')_\red$ (equivalently, $f(X \setminus |Z|)\subseteq X' \setminus |Z'|$).
%\end{enumerate}
\end{enumerate}
\end{description}
\end{defn}

Note that the divisors $f^*Y'$, $f^*(Z'-Z'_\red)$ are well-defined in case (2) (which gives a meaning to Conditions (b) and (c)): for the first it is clear, and for the second it follows from Condition (a). Moreover,

\begin{lemma}\label{rem3.34} In Definition \ref{d3.3}, we always have $f(|Y|) \subseteq |Y'|$, and Condition (2) (a) also holds in Case (1). 
\end{lemma}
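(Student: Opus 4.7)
The plan is to treat the two assertions separately, exploiting the key hypothesis on objects of $\IY$ that $|Y'|\cap |Z'|=\emptyset$, together with the elementary fact that pullback of Cartier divisors is compatible with supports when defined.

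For the first assertion ($f(|Y|)\subseteq |Y'|$), I would split into the two cases of Definition \ref{d3.3}. In Case (1) the inclusion is immediate from $f(|Y|)\subseteq f(X)\subseteq |Y'|$. In Case (2), Condition (b) gives $Y\le f^*Y'$ as effective Cartier divisors; since $f^*Y'$ is defined here, taking supports yields $|Y|\subseteq |f^*Y'|=f^{-1}(|Y'|)$, which rewrites as $f(|Y|)\subseteq |Y'|$.

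For the second assertion, I need to show that Condition (2)(a), namely $f(X\setminus |Z|)\subseteq X'\setminus |Z'|$, automatically holds when $f(X)\subseteq |Y'|$. The observation is that the disjointness condition $|Y'|\cap |Z'|=\emptyset$, which is built into the definition of the object $(X',Y',Z')\in \IY$, forces $f(X)\cap |Z'|=\emptyset$ as soon as $f(X)\subseteq |Y'|$. Hence $f(X)\subseteq X'\setminus |Z'|$, and in particular $f(X\setminus |Z|)\subseteq X'\setminus |Z'|$.

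There is no real obstacle: both assertions are essentially formal consequences of the object-level disjointness $|Y'|\cap |Z'|=\emptyset$ and the compatibility of pullback with supports. The only minor subtlety is to confirm that $f^*Y'$ is well-defined in Case (2), which is ensured because $Y'$ is effective and $f(X)$ is not contained in $|Y'|$ in that case (otherwise we would be in Case (1)).
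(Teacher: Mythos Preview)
Your proposal is correct and follows essentially the same approach as the paper's proof, which is very terse: the paper simply notes that the first point follows from Condition (2)(b) and the second from the hypothesis $|Y'|\cap |Z'|=\emptyset$. Your write-up merely makes explicit the (trivial) Case (1) of the first assertion and spells out the support-inclusion $|Y|\subseteq |f^*Y'|\subseteq f^{-1}(|Y'|)$, which the paper leaves implicit.
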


\begin{proof} The first point follows from Condition (2) (b) and the second point follows from the hypothesis $|Y'|\cap |Z'| = \emptyset$.
\end{proof}

%{\color{blue} Check! It should work because of Lemma \ref{mod-criterion1}.}

\begin{prop}\label{IYisCat} $\IY$ is a category. %More precisely, the assignment $(X,Y,Z)\mapsto X$ makes  $\IY$ a (nonfull) subcategory of $\Sm^\proj$.
\end{prop}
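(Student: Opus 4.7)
The plan is to check three things: existence of identities, associativity, and well-definedness of composition. Identity morphisms satisfy the Case (2) conditions with equality everywhere in (b), (c), and associativity is inherited from composition of morphisms of schemes (and from the monotonicity of Cartier divisor pullback used below); the only real content is that the composite of two $\IY$-morphisms is again an $\IY$-morphism.

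Given morphisms $f : (X,Y,Z) \to (X',Y',Z')$ and $g : (X',Y',Z') \to (X'',Y'',Z'')$ in $\IY$, I would first dispose of the cases where at least one is in Case (1). If $g$ is in Case (1), then $g(X') \subseteq |Y''|$ yields $(g \circ f)(X) \subseteq |Y''|$, so $g \circ f$ is in Case (1). If $f$ is in Case (1) and $g$ is in Case (2), then $f(X) \subseteq |Y'|$ combined with the first assertion of Lemma \ref{rem3.34} applied to $g$, namely $g(|Y'|) \subseteq |Y''|$, again yields $(g \circ f)(X) \subseteq |Y''|$, so $g \circ f$ is in Case (1).

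There remains the case where both $f$ and $g$ lie in Case (2). If $(g \circ f)(X) \subseteq |Y''|$, the composite is in Case (1) and we are done; otherwise Conditions (a), (b), (c) of Case (2) must be verified for $g \circ f$. Condition (a) is immediate by stringing together those for $f$ and $g$: $(g \circ f)(X \setminus |Z|) \subseteq g(X' \setminus |Z'|) \subseteq X'' \setminus |Z''|$. For (b) and (c), I would combine monotonicity and functoriality of Cartier divisor pullback with the given inequalities for $f$ and $g$ to obtain
\begin{align*}
Y &\le f^{*}Y' \le f^{*}(g^{*}Y'') = (g \circ f)^{*}Y'', \\
Z - Z_\red &\ge f^{*}(Z' - Z'_\red) \ge f^{*}(g^{*}(Z'' - Z''_\red)) = (g \circ f)^{*}(Z'' - Z''_\red).
\end{align*}
The only delicate point is to justify that every pullback appearing here is defined. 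Since each object's total space is connected smooth projective, hence irreducible, the dense open $X \setminus |Z|$ is non-empty, so (a) for $f$ forces $f(X) \not\subseteq |Z'|$, while the Case (2) hypothesis for $f$ gives $f(X) \not\subseteq |Y'|$; analogous statements hold for $g$. Combined with the sub-case assumption $(g \circ f)(X) \not\subseteq |Y''|$ and the derivation $(g \circ f)(X) \not\subseteq |Z''|$ from (a) above, these non-containments guarantee that all pullbacks are defined and satisfy $(g \circ f)^{*} = f^{*} \circ g^{*}$ on the relevant divisors, completing the verification.
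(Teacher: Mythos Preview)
Your proof is correct and follows essentially the same approach as the paper's. The paper organizes the case analysis more economically by immediately asking whether $gf(X)\subseteq |Y''|$ and, in the negative case, deducing via Lemma \ref{rem3.34} that neither $f$ nor $g$ can be in Case (1); your version instead treats the Case (1) sub-cases first and then handles the remaining Case (2)/Case (2) situation, but the content is the same.
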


\begin{proof} Let $f:(X,Y,Z) \to (X',Y',Z')$ and $g: (X',Y',Z') \to (X'',Y'',Z'')$ be two morphisms in $\IY$. We check that the composite $gf : X \to X''$ defines a morphism in $\IY$. If $gf(X) \subseteq |Y''|$, we are done. If $gf(X) \not\subseteq |Y''|$, we cannot have $g(X')\subseteq |Y''|$, nor $f(X)\subseteq |Y'|$ by Lemma \ref{rem3.34}. Then all pull-backs are defined, again by Lemma \ref{rem3.34}, and the modulus conditions on $f$ and $g$ immediately imply those on $gf$.
\end{proof}

\begin{remark}\label{r4.1} Thanks to Lemma \ref{rem3.34}, the morphisms verifying Condition (1) of Definition \ref{d3.3} form a $2$-sided ideal of $\IY$. Therefore, one may define a factor category $\ol{\IY}$ by quotienting by this ideal.
\end{remark}

Let $\TSm^\fin= \ulTSm^\fin\cap \TCor$. We want to define a full embedding of $\IY$ into $\TSm^\fin$. For this, we introduce:

\begin{defn} Let %$d_1\TSm^\fin$ denote the full subcategory of $\TSm^\fin$ consisting of triples $(\ol{T},T^+,T^-)$ such that $\ol{T}$ is smooth of dimension $1$, and  
$\TSm^\fin_{\min}$ denote the full subcategory of $\TSm^\fin$ consisting of triples $(\ol{T},T^+,T^-)$ such that $T^+_\red=F_T := T^+ \times_{\ol{T}} T^-$ (cf. Def. \ref{d2.2}) and $|T^+|\cap |T^--T^+_\red|=\emptyset$, and let $\TSm^\fin_{\min,\ls}$ be the full subcategory of $\TSm^\fin_{\min}$ such that  $\ol{T}$ is smooth, connected and $(T^++T^-)_\red$ is a simple normal crossing divisor. %There is another faithful, forgetful functor $\TSm^\fin_{\min,\ls}\to \Sm^\proj$ sending $(\ol{T},T^+,T^-)$ to $\ol{T}$.
\end{defn}

\begin{remark}
An object  $T \in \TSm^\fin$ belongs to $\TSm^\fin_{\min}$ if and only if there exists an effective Cartier divisor $D$ on $\ol{T}$ such that $D \cap T^+ = \emptyset$ and $T^- = T^+_\red + D$.
\end{remark}

\begin{thm}\label{t3.1} The assignments
\begin{align*}
\kappa:\IY&\to \TSm^\fin_{\min,\ls}\\
(X,Y,Z)&\mapsto (X,Z,Y+Z_\red) \\
\kappa':\TSm^\fin_{\min,\ls}&\to\IY\\
(\ol{T},T^+,T^-)&\mapsto (\ol{T},T^--T^+_\red,T^+) 
\end{align*}
define mutually inverse isomorphisms of categories.
\end{thm}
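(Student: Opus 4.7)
The strategy is to verify that $\kappa$ and $\kappa'$ are well-defined at the level of objects and of morphisms; since both functors leave the underlying scheme morphisms unchanged, mutual inverseness follows automatically once the object-level assignments are seen to be inverse to each other. At the object level, for $(X,Y,Z)\in\IY$ one checks directly that $(X,Z,Y+Z_\red)$ satisfies $T^+_\red = Z_\red = Z\cap(Y+Z_\red) = F_T$ (using $|Y|\cap|Z|=\emptyset$), $|T^+|\cap|T^--T^+_\red|=|Z|\cap|Y|=\emptyset$, and $(T^++T^-)_\red=(Y+Z)_\red$ is SNC; conversely, for $(\ol T,T^+,T^-)\in\TSm^\fin_{\min,\ls}$ the divisor $T^--T^+_\red$ is effective (since $T^+_\red=F_T\leq T^-$) and has support disjoint from $T^+$, yielding a valid IY object.

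For the morphism-level correspondence under the dictionary $(T^+,T^-)=(Z,Y+Z_\red)$, the direction $\IY\to\TSm^\fin_{\min,\ls}$ is straightforward. Case (1) of Definition \ref{d3.3} gives $f(X)\subseteq|Y'|\subseteq|T^{\prime-}|$, which is case (i) of \eqref{eq2.1}; in case (2), condition (a) ensures that $f$ restricts to $T^\o\to T^{\prime\o}$, and summing the inequalities $Y\leq f^*Y'$ and $Z-Z_\red\geq f^*(Z'-Z'_\red)$ yields $Z+f^*(Y'+Z'_\red)\geq Y+Z_\red+f^*Z'$, which is case (ii) of \eqref{eq2.1}. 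Conversely, if case (i) of \eqref{eq2.1} holds, then since $f(T^\o)\cap|Z'|=\emptyset$ one has $f(X\setminus|Z|)\subseteq|Y'|$, and denseness of $X\setminus|Z|$ in the irreducible $X$ together with closedness of $|Y'|$ gives $f(X)\subseteq|Y'|$, recovering IY case (1).

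The main obstacle is showing that case (ii) of \eqref{eq2.1} implies IY case (2), i.e.\ splitting the single inequality $Z+f^*Y'+f^*Z'_\red\geq Y+Z_\red+f^*Z'$ into the two IY inequalities (b) $Y\leq f^*Y'$ and (c) $Z-Z_\red\geq f^*(Z'-Z'_\red)$. I would check these at each prime Weil divisor $P$ of $X$, using the key set-theoretic observation $f^{-1}(|Z'|)\subseteq|Z|$ (which follows from $f(T^\o)\subseteq T^{\prime\o}$). When $P\not\subseteq|Z|$, then also $P\not\subseteq f^{-1}(|Z'|)$, so all $Z$-, $Z_\red$-, $f^*Z'$- and $f^*Z'_\red$-terms vanish at $P$; the given inequality reduces to (b) at $P$, while (c) is trivial. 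When $P\subseteq|Z|$, $|Y|\cap|Z|=\emptyset$ forces $\mathrm{mult}_P(Y)=0$, giving (b) trivially, and one further splits on whether $P\subseteq f^{-1}(|Z'|)$: in the subcase $P\subseteq f^{-1}(|Z'|)$, $|Y'|\cap|Z'|=\emptyset$ forces $\mathrm{mult}_P(f^*Y')=0$ and the given inequality reduces to exactly (c) at $P$; in the subcase $P\not\subseteq f^{-1}(|Z'|)$, the $f^*Z'$- and $f^*Z'_\red$-terms vanish, and (c) reduces to $\mathrm{mult}_P(Z)\geq\mathrm{mult}_P(Z_\red)=1$, which is automatic from the effectiveness of $Z$. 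The two disjointness hypotheses $|Y|\cap|Z|=\emptyset$ and $|Y'|\cap|Z'|=\emptyset$ are thus used crucially to decouple the single $\TSm^\fin_{\min,\ls}$ inequality into the two IY inequalities.
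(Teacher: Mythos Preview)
Your proof is correct and follows essentially the same approach as the paper's. The paper also reduces the reverse direction to splitting the single inequality \eqref{eq3.12} into the two IY inequalities using the disjointness hypotheses $|Y|\cap|Z|=\emptyset$ and $|Y'|\cap|Z'|=\emptyset$, but leaves this splitting as a one-line remark; your prime-by-prime case analysis is exactly the natural way to carry it out. A minor cosmetic difference: in handling case (i) of \eqref{eq2.1}, the paper argues via connectedness of $X$ (since $f(X)\subseteq |Y'|\sqcup|Z'|$ forces $f(X)$ into one piece), whereas you argue via density of $X\setminus|Z|$ in $X$; both work.
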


\begin{proof} Clearly, $\kappa$ sends $Ob(\IY)$ into $Ob(\TSm^\fin_{\min,\ls})$ and, conversely, $\kappa'$ is well-defined on objects. This said, it is obvious that they are mutual inverses on objects. It remains to check that  they define functors.

We begin with $\kappa$. Let $(X,Y,Z), (X',Y',Z')\in \IY$ and  $f:X \to X'$ be a morphism. We check that, if $f$ defines a morphism in $\IY$,  it also defines a morphism $\kappa(X,Y,Y) \to \kappa(X',Y',Y')$, i.e., a morphism
\[
(X,Z,Y+Z_\red) \to (X',Z',Y'+Z'_\red)
\]
 in $\TSm^\fin$. By Condition (2) (a) (which always holds by Lemma \ref{rem3.34}), $f(X \setminus Z) \cap |Z'|=\emptyset$, hence $f(X \setminus Z)\not \subseteq |Y'+Z'_\red|$ since $|Y'|\cap |Z'|=\emptyset$. Therefore it suffices to check that (the graph of) $f$ satisfies the modulus condition of \eqref{eq2.1}. If $f(X) \subseteq |Y'|$, then we are done by Lemma \ref{mod-criterion1}. Assume that $f(X) \not\subseteq |Y'|$. Then, we have 
 \begin{align}
Z - Z_\red &\geq f^\ast (Z'-Z'_\red).\label{eq3.14}\\
f^\ast Y' &\geq Y, \label{eq3.15}
\end{align}

The inequalities \eqref{eq3.14} and \eqref{eq3.15} imply
\begin{equation}\label{eq3.12}
(Z - Z_\red) + f^\ast Y' \geq  f^\ast (Z' - Z'_\red) + Y,
\end{equation}
which is equivalent to
\begin{equation}\label{eq3.13}
Z + f^\ast (Y'+Z'_\red) \geq Y + Z_\red + f^\ast Z'.
\end{equation}
This shows that $f$ satisfies the modulus condition, as desired.

To see that $\kappa'$ also defines a functor, it now suffices to show that $f:X\to X'$ defines a morphism in $\IY$ provided either condition of \eqref{eq2.1} is satisfied. Note that Condition (2) (a) always holds, because it means that  $\kappa(f)$ respects the interiors.

If the first condition of \eqref{eq2.1} holds, i.e. $f(X)\subseteq |Y'+Z'_\red|=|Y'|\coprod |Z'_\red|$, then either $f(X)\subseteq |Y'|$ or $f(X)\subseteq |Z'_\red|$ since $X$ is connected. In the first case, we are done. The second case is impossible, as noted above.

If the second condition of \eqref{eq2.1}, i.e. \eqref{eq3.13}, holds, we replace it by the equivalent \eqref{eq3.12}; the conditions $|Y|\cap |Z|=\emptyset$ and $|Y'|\cap |Z'|=\emptyset$ show that  \eqref{eq3.12} implies \eqref{eq3.14} + \eqref{eq3.15}. This yields (b) and (c) in Condition (2) of Definition \ref{d3.3}, concluding the proof.
\end{proof}

\subsection{Functoriality of Ivorra-Yamazaki's mixed Hodge structures with modulus} In \cite[4.4 and 4.5]{iy2}, Ivorra and Yamazaki define representations of (a subquiver of) $\IY$ into their category $\MHSM$ of mixed Hodge structures with modulus. In this subsection, as a slight generalization of their result, we prove that Ivorra-Yamazaki's mixed Hodge structure with modulus is a presheaf on $\IY$. For this, it suffices to extend the definition of their pullbacks to the case of morphisms satisfying Condition (1) of Definition \ref{d3.3}, and to prove compatibility with composition.

Note that for any $(X,Y,Z) \in \IY$, we have a natural inclusion $\Omega^n_{(X,Y,Z)} \subset j_* \Omega^n_{X\setminus Z}$. 
Moreover, for any $f : (X,Y,Z) \to (X',Y',Z')$ in $\IY$, the induced morphism $f^\o : X \setminus Z \to X' \setminus Z'$ defines a pullback map $(f^\o)^*:\Omega^n_{X'\setminus Z'} \to \Omega^n_{X\setminus Z}$.

\begin{lemma}\label{lem:pull-vanish}
Let $f:(X,Y,Z) \to (X',Y',Z')$ be a morphism in $\IY$ such that $f(X) \subset Y'$. Then, for any $p \geq 0$, the composite 
\[
\Omega^n_{(X',Y',Z')} \to j'_* \Omega^n_{X'\setminus Z'} \to f_* j_* \Omega^n_{X\setminus Z}
\]
is the zero map, where $j:X\setminus Z \to X$ and $j':X' \setminus Z' \to X'$ are the open immersions. In other words, we have a commutative diagram 
\[\xymatrix{
\Omega^n_{(X',Y',Z')} \ar[r]^0 \ar[d] & f_* \Omega^n_{(X,Y,Z)} \ar[d] \\
\Omega^n_{X'\setminus Z'} \ar[r]^{(f^\o)^*} & f_* j_* \Omega^n_{X\setminus Z}
}\]
where the vertical arrows are the inclusions. 
\end{lemma}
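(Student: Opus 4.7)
The plan is to exploit the hypothesis $f(X)\subseteq Y'$ together with the disjointness condition $|Y'|\cap |Z'|=\emptyset$ built into objects of $\IY$, and the defining vanishing property of $\Omega^n_{(X',Y',Z')}$ along $Y'$.

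First, I would observe that since $|Y'|\cap |Z'|=\emptyset$, the closed subset $|Y'|$ is already contained in $X'\setminus Z'$. Combined with $f(X)\subseteq |Y'|$, this produces a factorization
\[
f^{\circ}\colon X\setminus Z\xrightarrow{\;h\;} Y' \xrightarrow{\;i\;} X'\setminus Z',
\]
where $i$ is a closed immersion (taking $Y'$ with its reduced structure, say). In particular, $f^{\circ}$ is well-defined without any further appeal to Condition (2)(a) of Definition~\ref{d3.3}.

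Next, I would invoke the construction of $\Omega^n_{(X',Y',Z')}$ in \cite{iy2}: it is the subsheaf of $j'_{*}\Omega^n_{X'\setminus Z'}$ consisting of forms with at worst logarithmic poles along $Z'_\red$ that \emph{vanish along $Y'$}. Because $Y'$ is disjoint from $|Z'|$, pullback along $i$ carries no logarithmic contribution and defines a restriction map $i^{*}\colon j'_{*}\Omega^n_{X'\setminus Z'}\to i_{*}\Omega^n_{Y'}$; the vanishing condition is then precisely the statement that the composite
\[
\Omega^n_{(X',Y',Z')}\hookrightarrow j'_{*}\Omega^n_{X'\setminus Z'}\xrightarrow{\;i^{*}\;} i_{*}\Omega^n_{Y'}
\]
is the zero map. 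Applying the standard functoriality $(f^{\circ})^{*}=h^{*}\circ i^{*}$ to sections of $j'_{*}\Omega^n_{X'\setminus Z'}$ would then give the desired vanishing of the whole composite $\Omega^n_{(X',Y',Z')}\to j'_{*}\Omega^n_{X'\setminus Z'}\to f_{*}j_{*}\Omega^n_{X\setminus Z}$, and hence the commutativity of the stated square.

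The main obstacle, which I expect to be fairly mild, is making rigorous the factorization of the pullback $(f^{\circ})^{*}$ through the restriction $i^{*}$ at the level of the push-forwards onto $X'$: this amounts to chasing the natural map between total direct images using that $h$ takes values in $Y'$, and that the vanishing-along-$Y'$ property is, by design, encoded into $\Omega^n_{(X',Y',Z')}$ in \cite{iy2}. Once this verification is in place, the lemma follows immediately.
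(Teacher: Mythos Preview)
Your proposal is essentially the paper's argument: factor $f^\circ$ through (an irreducible component of) $Y'_\red$ and use that sections of $\Omega^n_{(X',Y',Z')}$ restrict to zero there. The paper verifies this last vanishing by an explicit local computation---reducing via the chain $\Omega^n_{(X',Y',Z')}\subset\Omega^n_{(X',Y'_1,Z')}\subset j'_*\Omega^n_{(X'\setminus Z',Y'_1,\emptyset)}$ to the case $Z'=\emptyset$ with $Y'$ a smooth prime divisor cut out by $s$, identifying the sheaf with $\Omega^n_{X'}(\log Y'_\red)\otimes\mathcal{O}_{X'}(-Y')$, and observing that its local generators $s\cdot\Omega^n_{X'}$ and $\Omega^{n-1}_{X'}\wedge ds$ are killed because $f^*s=0=f^*ds$.

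One caution: your description of $\Omega^n_{(X',Y',Z')}$ as ``forms with at worst logarithmic poles along $Z'_\red$ that vanish along $Y'$'' is not literally the definition in \cite{iy2}; near $Y'$ the sheaf is the twisted log complex above, and the fact that $i^*$ annihilates it is the standard identity $\Omega^n(\log D)\otimes\mathcal{O}(-D)=\ker(\Omega^n\to i_*\Omega^n_D)$ for a smooth divisor $D$, which is precisely what the paper's generator computation checks. So the substance of the lemma is this vanishing, not the sheaf-theoretic factorization you flag as the ``main obstacle''---that part really is routine.
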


\begin{proof} By adjunction, it suffices to show that the composite
\[
f^*\Omega^n_{(X',Y',Z')} \to f^*j'_* \Omega^n_{X'\setminus Z'} \to  j_* \Omega^n_{X\setminus Z}
\]
is the zero map.
Since the problem is Zariski local on $X'$, we may assume that the simple normal crossing divisor $|Y'|$ is defined by a local parameter $s=s_1^{m_1}\cdots s_r^{m_r}$, where $s_1,\dots,s_r$ are a system of regular sequences and $m_i$ are positive integers. 

Since $X$ is integral and $f(X) \subset Y'$, the morphism $f$ factors through an irreducible component of $|Y'|$ with its reduced structure, which we denote by $Y'_1$.
By changing the numbering if necessary, we may assume that $Y'_1$ is defined by $s_1=0$.
Since we have $\Omega^n_{(X',Y',Z')} \subset \Omega^n_{(X',Y'_1,Z')} \subset j'_* \Omega^n_{(X'\setminus Z',Y'_1,\emptyset )}$ by definition, we may assume that $Z'=\varnothing$ and $Y'$ is an integral smooth divisor whose local parameter is $s=s_1$.

When $Z'=\varnothing$ and $Y'$ is an integral smooth divisor parametrized by $s$, we have 
\begin{align*}
\Omega^n(X',Y',\varnothing) &= \Omega^n(\log Y'_\red) \otimes_{\mathcal{O}_{X'}} \mathcal{O}_{X'}(-Y') \\
&= \left(\bigwedge^n \Omega^1(\log Y'_\red)  \right) \otimes_{\mathcal{O}_{X'}} \mathcal{O}_{X'}(-Y').
\end{align*}

Note that $\Omega^1(\log Y'_\red)=\Omega^1_{X'}\oplus \sO_{X'}\mathrm{dlog} s$. It follows that 
%By the property of exterior product, 
$\bigwedge^n \Omega^1(\log Y'_\red)$ is a free $\mathcal{O}_{X'}$-module generated by the exterior products of $\Omega^1_{X'}$ and $\mathrm{dlog}s$, and moreover, in each exterior products, $\mathrm{dlog}s$ appears at most once. 
Therefore, the $\mathcal{O}_{X'}$-module $ \left(\bigwedge^n \Omega^1(\log Y'_\red)  \right) \otimes_{\mathcal{O}_{X'}} \mathcal{O}_{X'}(-Y') = s \cdot \left(\bigwedge^n \Omega^1(\log Y'_\red)  \right)$ is generated $s \cdot \Omega^1_{X'}$ and $s\mathrm{dlog} s = s \cdot (ds/s) = ds$.
Since we have $f^*s = 0=f^*ds$ by the factorization $X \to Y' \subset X'$, we see that the pullback by $f$ of the above generators vanish. 
%This finishes the proof.
\end{proof}

By Lemma \ref{lem:pull-vanish}, one sees that there exists a natural pullback $f^*:H^{n,k}(X',Y',Z’) \to H^{n,k}(X,Y,Z)$ for any morphism $f$ in $\IY$ extending the one of \cite{iy2}, and if moreover $f(X) \subset |Y'|$, then $f^*=0$.
This easily implies that $f^*$ is compatible with the composition in $\IY$, and hence $H^{n,k}(X,Y,Z)$ is a presheaf on $\IY$.
A similar argument shows that $H_{\mathrm{add}}$ is also a presheaf (and $H_{\mathrm{inf}}$ is a presheaf by a different and easier reason). If we understood the definition of $\MHSM$ correctly, this defines a functor $\IY^\op\to \MHSM$, which factors through the category $\ol{\IY}$ of Remark \ref{r4.1}.

\begin{remarks} Let $d_1 \TSm^\fin$ be the full subcategory of  $\TSm^\fin$ formed of those $T$ such that $\dim \ol{T}=1$.
\begin{enumerate}
\item  The assignment 
\[(\ol{T},T^+,T^-)\mapsto (\ol{T},T^++T^+_\red,T^-+T^+_\red)\]
defines an endofunctor $\sigma$ of $d_1 \TSm^\fin$, provided with a natural isomorphism $\sigma\Rightarrow \id$. This endofunctor sends $d_1\TphiSm^\fin$ into $d_1\TSm^\fin_{\min}$, hence realises the former category as the full subcategory of $\olMCrv$ consisting of triples $(X,Y,Z)$ such that $Z$ is ``everywhere non reduced''. 
\item Any object of $d_1\TSm^\fin$ is isomorphic to a unique object $(\ol{T},T^+,T^-)$ such that $\ol{T}$ is normal and $F_T$ is reduced. Hence this category is too big to describe $\olMCrv$.
\item In \cite{iy}, Ivorra and Yamazaki use  $\olMCrv$ to perform a construction \`a la Nori, which they identify with Laumon's $1$-motives when $k$ is a number field. 
\item The category $\MHSM$ carries a perfect duality, but no tensor structure. Can one modify it so that a tensor structure becomes available and the representations of \cite{iy2} extend via Theorem \ref{t3.1} to a graded $\otimes$-functor from $\TSm^\fin$?
\end{enumerate}
\end{remarks}

Here are two ``cross-fertilisation'' ideas between $\TCor$ and $\IY$, trying to answer the last question:

\begin{description}
\item[Duality] the category $\IY$ carries a duality compatible with the Hodge realisation \cite[Th. 18]{iy2}: it simply carries $(X,Y,Z)$ to $(X,Z,Y)$. Translated through the functors $\kappa$ and $\kappa'$ of Theorem \ref{t3.1}, it yields the following in $\TSm^\fin_{\min,\ls}$:
\[(\ol{T},T^+,T^-)\mapsto (\ol{T},T^--T^+_\red,T^++(T^--T^+_\red)_\red).\]

(One should also twist by $\dim \ol{T}$, but twists would have to be understood later for modulus triples.)
\item[Tensor product] towards answering Question (3) above). The category $\IY$ lacks a tensor structure, but $\TSm$ has one by Proposition \ref{p2.1}. We can try to transport it to $\IY$ as above getting the same formula, namely
\begin{multline*}
(X,Y,Z)\otimes (X',Y',Z')=\\
(X\times X',Y\times X'+X\times Y',Z\times X'+X\times Z').
\end{multline*}

However, this tensor product does not have disjoint supports, so it does not define an object of $\IY$.
\end{description}

\subsection{Comparison with Binda's modulus data} Let us ``recall'' from \cite[Def. 2.11]{binda} the definition of the category $\olMSm$ of modulus data:

\begin{description}
\item[Objects] triples $M = (\ol{M};\partial M,D_M)$, where $\ol{M} \in \Sm$ is a smooth $k$-scheme, $\partial M$ is a {\color{blue} reduced} strict normal crossing divisor on $\ol{M}$ (possibly empty), $D_M$ is an effective Cartier divisor on $\ol{M}$ (the case $D_M = \emptyset$ is allowed), and the total divisor $(D_M)_\red + \partial M$ is a strict normal crossing divisor on $\ol{M}$\footnote{ According to loc. cit., Rem. 2.12, this latter condition is not strictly necessary.}.
\item[Morphisms] given $M_1, M_2\in \olMSm$, a $k$-morphism $f : \ol{M_1} \to \ol{M_2}$ is \emph{admissible} (i.e. defines a morphism $M_1\to M_2$) if it satisfies the following conditions.
\begin{enumerate}
\item[i)] For every irreducible component $\partial M_{2,l}$ of $\partial M_2$ we have $|f^*(\partial M_{2,l})| \subseteq |\partial M_1|$.
\item[ii)]  If $f(\ol{M}_1)\subseteq  |D_{M_2}]$: no further condition.
\item[iii)] Otherwise: $f^*(D_{M_2} )\allowbreak \ge D_{M_1}$ as Weil divisors on $\ol{M}_1$.
\end{enumerate}
\end{description}

Here are some comments on this reminder. First, the fact that $\partial M$ is reduced is implicit in \cite{binda}, as only its support is used. In the definition of morphisms, Case ii) is missing from \cite{binda} but seems necessary to ensure their composability, as in $\IY$. 
Thanks to this, up to switching $\partial M$ and $D_M$, Condition i) on morphisms is identical to Condition (2) (a) of Definition \ref{d3.3}, Part (c) of the latter being empty, and Condition i) is identical to Conditions (1) + (2) (a) of Definition \ref{d3.3}. This formulation of Condition ii) is necessary here, because the second point of Lemma \ref{rem3.34} depends on the disjointness of supports in $\IY$ which is not assumed in \cite{binda}.

This said, let $\TSm^\fin_\man$ denote the full subcategory of $\TSm^\fin$ consisting of triples $(\ol{T},T^+,T^-)$ such that $T^+$ is reduced and  $T^--T^+$ is  an effective Cartier divisor, and let $\TSm^\fin_{\man,\ls}$ be the full subcategory of $\TSm^\fin_\man$ such that  $\ol{T}$ is smooth, connected and $T^-_\red$ is a simple normal crossing divisor. Then the proof of Theorem \ref{t3.1} carries over to give two mutually inverse isomorphisms of categories
\begin{align*}
\kappa:\olMSm&\to \TSm^\fin_{\man,\ls}\\
(\ol{M};\partial M,D_M)&\mapsto (\ol{M},\partial M,D_M+\partial M) \\
\kappa':\TSm^\fin_{\man,\ls}&\to\olMSm\\
(\ol{T},T^+,T^-)&\mapsto (\ol{T},T^+, T^--T^+). 
\end{align*}

%\redb{Presumably, one should define a larger category containing both $\IY$ and $\olMSm$ as full subcategories, and show that it is isomorphic to a specific full subcategory of $\ulTSm^\fin$.}

%We get an obvious embedding $\olMSm\to \IY$ sending $(X,Y,Z)$ to $(X,Z,Y)$. Moreover, we may extend the functor $\kappa$ of Theorem \ref{t3.1} to a full embedding $\olMSm\inj \TSm^\fin$ by sending $(\ol{M};\partial M,D_M)$ to $(\ol{M};\partial M,D_M+\partial M)$. The verification is the same as for Theorem \ref{t3.1}.

%%%%%%%%%%%%%%%%%%%%%%%%%%%%%%%
%%%%%%%%%%%%%%%%%%%%%%%%%%%%%%%
%%%%%%%%%%%%%%%%%%%%%%%%%%%%%%%

\appendix

\section{Non-effective modulus}

Morally, we would like to think of the modulus triple $T=(\ol{T},T^+,T^-)$ as a ``model'' of a pair $(\ol{T},T^+-T^-)$, with $T^+-T^-$ a Cartier divisor which is not necessarily effective. 
Actually, we can define the category of modulus pairs with ``non-effective modulus'', and we can embed it fully faithfully into the category of modulus triples. 

\begin{definition}
We say that a modulus triple $T$ is \emph{saturated} if $|T^+ - T^-| = |T^+|$.
Let $\ulTsatCor$ be the full subcategory of $\ulTCor$ consisting of saturated modulus triples. 
\end{definition}

%\begin{remark}
%For any modulus pair $T$, there exists a canonical morphism 
%\[
%T':=(\ol{T},T^+ + T^-,T^-) \to T = (\ol{T},T^+,T^-),
%\]
%where $T'$ is saturated.
%s\end{remark}

%\redb{I wonder if every morphism of $\ulTsatCor$ is in excellent position? Probably not.}

\begin{definition}
An \emph{ne-modulus pair} is a pair $\sX=(\ol{X},X^\infty)$ such that 
\begin{enumerate}
\item $\ol{X}$ is a separated \emph{normal} $k$-scheme of finite type,
\item $X^\infty$ is a Cartier divisor (not necessarily effective!),
\item $\sX^{\o}:=\ol{X} \setminus |X^\infty|$ is $k$-smooth.
\end{enumerate}
\end{definition}

Recall that, by definition, $|X^\infty|$ is the (closed) complement in $\ol{X}$ of the locus where $X^\infty$ is $0$.

\begin{remark}
The ``ne'' stands for ``non-effective''. Note that any modulus pair whose total space is normal is an ne-modulus pair. 
\end{remark}

\begin{definition}
For ne-modulus pairs $\sX,\sY$, define $\ulNCor (\sX,\sY)$ to be the free abelian group generated by those elementary correspondences $V \in \Cor (\sX^\o , \sY^\o)$ such that the following condition holds:
let $\ol{V} \subset \ol{X} \times \ol{Y}$ be the closure of $V$ and $\ol{V}^N \to \ol{V}$ be its normalization. Let $p:\ol{V}^N \to \ol{X}$ and $q:\ol{V}^N \to \ol{Y}$ be the natural projections.
Then 
\begin{enumerate}
\item $p$ is proper.
\item $p^*X^\infty \geq q^*Y^\infty$.
\end{enumerate}
We call an element of $\ulNCor (\sX,\sY)$ a \emph{modulus correspondence} from $\sX$ to $\sY$.
\end{definition}

(These are the same conditions as in $\ulMCor$.)

\begin{lemma}\label{lA.1}
The composition of finite correspondences induce composition of modulus correspondences. In particular, we obtain a category $\ulNCor$ whose objects are ne-modulus pairs and whose morphisms are modulus correspondences. Moreover, there exists a fully faithful functor $\ulMCor \to \ulNCor$.
\end{lemma}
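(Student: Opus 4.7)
The plan is to mirror the composition argument used for $\ulMCor$ in \cite[\S 1.7]{motmodI} — equivalently, the one-divisor simplification of Proposition \ref{p-composition} — while paying attention to the fact that the Cartier divisors $X^\infty$ need no longer be effective. Given elementary $\alpha \in \ulNCor(\sX,\sY)$ and $\beta \in \ulNCor(\sY,\sZ)$, I would fix an irreducible component $\gamma$ of $\beta \circ \alpha \in \Cor(\sX^\o,\sZ^\o)$ and pick an irreducible component $\gamma'$ of $(\alpha \times \sZ^\o) \cap (\sX^\o \times \beta)$ whose generic point lies above that of $\gamma$, writing $\ol{\gamma}$ and $\ol{\gamma}'$ for their closures in $\ol{X} \times \ol{Z}$ and $\ol{X} \times \ol{Y} \times \ol{Z}$ respectively.

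For properness, I would repeat Step 1 of Proposition \ref{p-composition}: $\ol{\gamma}' \subset \ol{\alpha} \times \ol{Z}$ is proper over $\ol{\alpha}$, which is proper over $\ol{X}$; since $\ol{\gamma}' \to \ol{\gamma}$ is proper surjective, properness of $\ol{\gamma} \to \ol{X}$ follows. For the modulus inequality, I normalize. Since $\gamma'$ is dense in $\ol{\gamma}'$ and lives in $\sX^\o \times \sY^\o \times \sZ^\o$, none of the three projections $\ol{\gamma}'^N \to \ol{X}, \ol{Y}, \ol{Z}$ factors through $|X^\infty|, |Y^\infty|, |Z^\infty|$, so the pullbacks of these Cartier divisors to $\ol{\gamma}'^N$ are defined. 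The universal property of normalization yields factorizations $\ol{\gamma}'^N \to \ol{\alpha}^N$ and $\ol{\gamma}'^N \to \ol{\beta}^N$, and pulling back the modulus inequalities for $\alpha$ and $\beta$ along these gives $p_X^* X^\infty \geq p_Y^* Y^\infty \geq p_Z^* Z^\infty$ on $\ol{\gamma}'^N$.

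To conclude, I must descend the inequality $p_X^* X^\infty \geq p_Z^* Z^\infty$ along the proper surjective morphism $f : \ol{\gamma}'^N \to \ol{\gamma}^N$. This is the main obstacle, since the relevant extension of \cite[Lemma 2.2]{Krishna-Park} must now be stated for possibly non-effective Cartier divisors. I would argue it via the Weil-divisorial characterization of effectivity on normal integral schemes: the Cartier divisor $D := p_X^* X^\infty - p_Z^* Z^\infty$ on $\ol{\gamma}^N$ is effective if and only if $v_\mathfrak{p}(D) \geq 0$ at every height-one prime $\mathfrak{p}$, and any such $\mathfrak{p}$ is dominated by a height-one prime $\mathfrak{q}$ of $\ol{\gamma}'^N$ for which $v_\mathfrak{q}(f^*D) = e(\mathfrak{q}/\mathfrak{p}) \, v_\mathfrak{p}(D)$ with ramification index $e>0$; since $f^*D \geq 0$ by the previous step, this forces $D \geq 0$.

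For the fully faithful embedding, I would send $M = (\ol{M}, M^\infty)$ to $\tilde{M} := (\ol{M}^N, \nu^* M^\infty)$, where $\nu: \ol{M}^N \to \ol{M}$ is the normalization; the interior of $\tilde{M}$ is canonically identified with $M^\o$ (normalization being an isomorphism over the smooth interior), so $\tilde{M}$ is an ne-modulus pair. For $V \in \Cor(M_1^\o, M_2^\o)$, the closures $\ol{V} \subset \ol{M}_1 \times \ol{M}_2$ and $\ol{V}' \subset \ol{M}_1^N \times \ol{M}_2^N$ share a canonical normalization — namely the normalization of the integral scheme $V$ — so the modulus inequality takes the same form on both sides. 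Properness of $\ol{V} \to \ol{M}_1$ is equivalent to properness of $\ol{V}' \to \ol{M}_1^N$: one direction by base change along the finite morphism $\ol{M}_1^N \to \ol{M}_1$ (using that $\ol{V}'$ is closed in $\ol{V} \times_{\ol{M}_1} \ol{M}_1^N \times_{\ol{M}_2} \ol{M}_2^N$), and the other by surjectivity of $\ol{V}' \to \ol{V}$ together with properness of $\ol{M}_1^N \to \ol{M}_1$. This identifies $\ulMCor(M_1, M_2) = \ulNCor(\tilde{M}_1, \tilde{M}_2)$ as subgroups of $\Cor(M_1^\o, M_2^\o)$, and compatibility with composition is immediate since the functor is the identity on underlying finite correspondences.
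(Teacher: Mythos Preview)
Your overall strategy matches the paper's: both reduce to the composition argument for $\ulMCor$ from \cite[Prop.\ 1.2.4, 1.2.7]{motmodI}, and both construct the embedding $\ulMCor \to \ulNCor$ via normalization of the total space. Your treatment of properness, of the descent of the modulus inequality along the proper surjection $\ol{\gamma}'^N \to \ol{\gamma}^N$ via Weil-divisorial valuations, and of full faithfulness is correct and more detailed than the paper's one-line reference.

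There is, however, one genuine gap. You assert that ``the universal property of normalization yields factorizations $\ol{\gamma}'^N \to \ol{\alpha}^N$ and $\ol{\gamma}'^N \to \ol{\beta}^N$''. The first is fine because $\gamma' \to \alpha$ is dominant (its generic point lies over a generic point of $\sX^\o$, and $\alpha$ is finite over $\sX^\o$; cf.\ Claim~\ref{claim1.22}(c)). But $\gamma' \to \beta$ need \emph{not} be dominant: take $\sX^\o = \sY^\o = \sZ^\o = \A^1$, $\alpha$ the graph of a constant map, and $\beta$ the graph of the identity; then the image of $\gamma'$ in $\beta$ is a single point. Without dominance, the universal property of normalization gives you no map $\ol{\gamma}'^N \to \ol{\beta}^N$, so you cannot directly pull back the inequality for $\beta$. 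The standard repair (as in the proof of Proposition~\ref{p-composition} here, and implicitly in \cite{motmodI}) is to let $\gamma_0$ be the image of $\gamma'$ in $\beta$, invoke a containment lemma for not-necessarily-effective Cartier divisors (e.g.\ \cite[Lemma 2.4]{cubeinv}) to transfer the modulus inequality from $\ol{\beta}^N$ to $\ol{\gamma}_0^N$, and then pull back along the dominant map $\ol{\gamma}'^N \to \ol{\gamma}_0^N$. Once you insert this step, your argument goes through.
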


\begin{proof}
The proof of the well-definedness the composition is verbatim the same as the one for the composition in $\ulMCor$ \cite[Prop. 1.2.4 and 1.2.7]{motmodI}.
The functor $\ulMCor \to \ulNCor$ is given by $\sX=(\ol{X},X^\infty) \mapsto \sX^N :=(\ol{X}^N,p^*X^\infty)$, where $p:\ol{X}^N \to \ol{X}$ is the normalization. 
Noting that $\sX^N \cong \sX$ in $\ulMCor$, the full faithfulness is obvious by definition of $\ulNCor$.
\end{proof}

The support of the pullback of a (non-effective) Cartier divisor $D$ may be strictly smaller than the pullback of the support of $D$. However, we have the following lemma. 

\begin{lemma}\label{lem:pb-supp}
Let $f:Y \to X$ be a proper surjective morphism of integral schemes such that the canonical morphism $\mathcal{O}^*_X \to f_* \mathcal{O}^*_Y$ is an isomorphism. 
Let $D$ be a Cartier divisor on $X$. 
Then we have $|f^*D| = f^{{-1}} (|D|)$ as sets. 
\end{lemma}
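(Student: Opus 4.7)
The plan is to prove both set-theoretic inclusions separately.

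The inclusion $|f^*D| \subseteq f^{-1}(|D|)$ is formal and does not require the hypothesis on units: if $f(y) = x \notin |D|$, then $D$ is locally trivial at $x$, admitting a local equation $s \in \mathcal{O}_{X,x}^*$ on some open $U \ni x$. Pulling back, $f^*s \in \mathcal{O}_Y^*(f^{-1}(U))$ is a unit local equation for $f^*D$ at $y$, whence $y \notin |f^*D|$.

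For the reverse inclusion $f^{-1}(|D|) \subseteq |f^*D|$, I would argue by contraposition: take $x \in X$ such that the entire fibre $f^{-1}(x)$ is disjoint from $|f^*D|$, and show $x \notin |D|$. Since $f$ is proper, the fibre $f^{-1}(x)$ is quasi-compact and is therefore covered by finitely many opens $V_i \subseteq Y$ on each of which $f^*D$ is trivial. Properness of $f$ yields an open $U \ni x$ in $X$ with $f^{-1}(U) \subseteq \bigcup V_i$, so $f^*D|_{f^{-1}(U)} = 0$ as a Cartier divisor. Pick any local equation $s \in \mathcal{K}_X^*(U)$ for $D$ on $U$. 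Its pullback $f^*s \in \mathcal{K}_Y^*(f^{-1}(U))$ represents $f^*D|_{f^{-1}(U)} = 0$, hence lies in $\mathcal{O}_Y^*(f^{-1}(U))$. The hypothesis $\mathcal{O}_X^*(U) \iso \mathcal{O}_Y^*(f^{-1}(U))$ provides $s' \in \mathcal{O}_X^*(U)$ with $f^*s' = f^*s$; the injectivity of $f^*: \mathcal{K}_X^*(U) \to \mathcal{K}_Y^*(f^{-1}(U))$ (which follows from the surjectivity of $f$ and the integrality of $X$ and $Y$) forces $s = s'$, placing $s$ in $\mathcal{O}_X^*(U)$. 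Therefore $D|_U = 0$ and $x \notin |D|$, as desired.

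The main obstacle is the passage from the pointwise condition ``some $y \in f^{-1}(x)$ avoids $|f^*D|$'' to the fibrewise condition ``$f^{-1}(x) \cap |f^*D| = \emptyset$'' which is needed to apply the contrapositive argument above. I expect that this saturation follows from a careful local refinement of the descent argument: once $f^*D$ is trivial on some open neighbourhood of a single point $y \in f^{-1}(x)$, a unit descent analogous to the final step of the main argument should propagate this triviality across the whole fibre of $x$, using the isomorphism $\mathcal{O}_X^* \iso f_*\mathcal{O}_Y^*$ in a more delicate way. This saturation step is where the hypothesis is really used, and I expect it to be the technical heart of the proof.
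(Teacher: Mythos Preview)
Your descent argument---showing that if $f^*D$ is trivial on a saturated open $f^{-1}(U)$ then $D$ is trivial on $U$, via the hypothesis on units---is essentially the paper's. The paper packages it as a snake-lemma diagram chase (injectivity of $\mathcal{K}_X^*/\mathcal{O}_X^* \hookrightarrow f_*(\mathcal{K}_Y^*/\mathcal{O}_Y^*)$), but your hands-on version with local equations is equivalent. For the ``saturation'' obstacle, the paper does not argue pointwise: it replaces $X$ by the open subset $X \setminus f(|f^*D|)$ (open since $f$ is proper) and observes that $f^*D$ becomes globally trivial on the preimage; the injectivity then forces $D$ to be trivial there, i.e.\ $|D| \subseteq f(|f^*D|)$. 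This global reformulation is the move you were looking for.

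However, your suspicion that something real is missing is correct: the ``saturation'' step cannot be completed, because the lemma as stated is false. Take $X = \A^2 = \Spec k[s,t]$, let $f\colon Y \to X$ be the blow-up at the origin (so $X$ is normal, $f_*\mathcal{O}_Y = \mathcal{O}_X$, and the unit hypothesis holds), and let $D$ be the principal Cartier divisor of $s/t$. The origin lies in $|D|$, so the exceptional divisor $E$ lies entirely in $f^{-1}(|D|)$; but in the chart $\Spec k[s,u]$ with $t = su$ one computes $f^*(s/t) = 1/u$, which is a unit at the generic point of $E$, so that point is not in $|f^*D|$. What both the paper's argument and yours actually establish is the weaker (and true) statement $f(|f^*D|) = |D|$; the paper's ``i.e.'' identifying this with $|f^*D| \supseteq f^{-1}(|D|)$ is not justified, and your proposed saturation argument cannot succeed in general.
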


\begin{proof}
We have an obvious inclusion $|f^*D| \subset f^{{-1}} (|D|)$. 
We have to prove $|f^*D| \supset f^{{-1}} (|D|)$, i.e., $f(|f^*D|) \supset |D|$.
Noting that $f(|f^*D|) \subset X$ is a closed subset by the properness of $f$, this is equivalent to that $D$ is trivial on the open subset $X \setminus f(|f^*D|)$.
Notice that $f^*D$ is trivial on $Y \setminus f^{-1} (f(|f^*D|)) \subset Y \setminus |f^*D|$.
Therefore, replacing $X$ by $X \setminus f(|f^*D|)$, we are reduced to the case that $f^*D$ is trivial. 

Assume that $f^*D$ is trivial. What we have to prove is that $D$ is also trivial. 
Consider the commutative diagram
\[\xymatrix{
0 \ar[r] & \mathcal{O}^*_X \ar[r] \ar[d]_a & \mathcal{K}^*_X \ar[r] \ar[d]_b & \mathcal{K}^*_X / \mathcal{O}^*_X \ar[r] \ar[d] _c& 0 \\
0 \ar[r] & f_* \mathcal{O}^*_Y \ar[r] & f_* \mathcal{K}^*_Y \ar[r] & f_* (\mathcal{K}^*_Y / \mathcal{O}^*_Y) & 
}\]
where $a$ is an isomorphism by assumption and $b$ is an injection which exists by the dominance of $f$.
Note that the horizontal sequences are exact since $f_*$ is left exact. 
Then, the snake lemma shows that $c$ is injective, which means that a Cartier divisor on $X$ is trivial if its pullback on $Y$ is trivial. %This finishes the proof.
\end{proof}

\begin{cor}\label{cor:pb-supp}
Let $f:Y \to X$ be a proper surjective morphism of integral schemes with $X$ normal. Then, for any Cartier divisor $D$ on $X$, we have $|f^*D| = f^{{-1}} (|D|)$ as sets. 
\end{cor}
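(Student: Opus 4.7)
The plan is to deduce the corollary from Lemma~\ref{lem:pb-supp} by verifying that its hypothesis, namely that $\mathcal{O}_X^{\,*}\to f_*\mathcal{O}_Y^{\,*}$ is an isomorphism, follows from the normality of $X$. My approach proceeds via the Stein factorization of $f$.

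More precisely, I would factor $f = g\circ h$, where $h:Y\to X'$ is proper with $h_*\mathcal{O}_Y = \mathcal{O}_{X'}$ and $g:X'\to X$ is finite; here $X' := \Spec_X f_*\mathcal{O}_Y$, which is integral because $Y$ is. Then $f_*\mathcal{O}_Y = g_*\mathcal{O}_{X'}$, so it suffices to establish $\mathcal{O}_X^{\,*} = g_*\mathcal{O}_{X'}^{\,*}$. The sheaf $g_*\mathcal{O}_{X'}$ is a coherent $\mathcal{O}_X$-algebra, integral over $\mathcal{O}_X$, and it embeds in the constant sheaf associated to $K(X')$. Normality of $X$ implies that whenever $K(X') = K(X)$ (i.e.\ $g$ is birational) one has $g_*\mathcal{O}_{X'} = \mathcal{O}_X$, whence $g_*\mathcal{O}_{X'}^{\,*} = \mathcal{O}_X^{\,*}$, and Lemma~\ref{lem:pb-supp} concludes. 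This already handles the principal case of interest, in particular all birational $f$.

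The main obstacle is the general situation in which the Stein factorization produces a proper finite extension $K(X) \subsetneq K(X')$: there, $g_*\mathcal{O}_{X'}$ strictly contains $\mathcal{O}_X$ and the hypothesis of Lemma~\ref{lem:pb-supp} fails for $f$ itself. To bypass this, I would treat $g$ and $h$ separately. For the finite morphism $g$ between integral schemes with $X$ normal, the identity $|g^*D| = g^{-1}(|D|)$ is a purely local statement: at $x'\in g^{-1}(x)$, a rational function $\phi\in K(X)^{\,*}$ lies in $\mathcal{O}_{X,x}^{\,*}$ if and only if its image in $K(X')$ lies in $\mathcal{O}_{X',x'}^{\,*}$, where the nontrivial direction uses that $\mathcal{O}_{X',x'}$ is integral over $\mathcal{O}_{X,x}$ together with normality of $\mathcal{O}_{X,x}$ in $K(X)$. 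For $h:Y\to X'$, the hypothesis of Lemma~\ref{lem:pb-supp} holds by construction (since $h_*\mathcal{O}_Y = \mathcal{O}_{X'}$ forces $h_*\mathcal{O}_Y^{\,*} = \mathcal{O}_{X'}^{\,*}$), giving $|h^*E| = h^{-1}(|E|)$ for any Cartier divisor $E$ on $X'$. Composing these two identities with $E = g^*D$ yields $|f^*D| = h^{-1}(g^{-1}(|D|)) = f^{-1}(|D|)$, as required.
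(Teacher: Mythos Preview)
Your Stein-factorization strategy is sound and in fact more thorough than the paper's own argument, which simply asserts $f_*\mathcal{O}_Y = \mathcal{O}_X$ from properness of $f$ and normality of $X$ and then invokes Lemma~\ref{lem:pb-supp} directly. That assertion is only valid when $K(Y)=K(X)$ --- it fails already for a degree-two cover --- so the paper's proof, read literally, covers only the birational case (which, as you observe, is the case of interest and the only one used later). Your treatment of $h$ is correct, and the composition step is fine.

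There is, however, a genuine gap in your handling of the finite part $g$: the claim that $\mathcal{O}_{X',x'}$ is integral over $\mathcal{O}_{X,x}$ is false whenever more than one point of $X'$ lies over $x$. In that situation $\mathcal{O}_{X',x'}$ is a proper localization of the semi-local ring $(g_*\mathcal{O}_{X'})_x$, and an element lying in one of its maximal ideals but not another becomes a unit, destroying integrality over $\mathcal{O}_{X,x}$. The equivalence $\phi\in\mathcal{O}_{X,x}^{\,*}\Leftrightarrow\phi\in\mathcal{O}_{X',x'}^{\,*}$ is nevertheless true; the correct mechanism is \emph{going-down} for the integral extension $\mathcal{O}_{X,x}\hookrightarrow (g_*\mathcal{O}_{X'})_x$, available precisely because $\mathcal{O}_{X,x}$ is normal. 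After replacing $X'$ by its normalization (still finite over $X$, with local rings containing those of $X'$), each height-one prime $P\subset\mathfrak{m}_x$ lifts via going-down to a height-one prime $Q\subset\mathfrak{m}_{x'}$; then $\phi\in\mathcal{O}_{X',x'}\subset(\mathcal{O}_{X'})_Q$ forces $v_P(\phi)\ge 0$, and since the normal domain $\mathcal{O}_{X,x}$ is the intersection of its height-one localizations, $\phi\in\mathcal{O}_{X,x}$.
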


\begin{proof}
By the properness of $f$, we know that $f_*\mathcal{O}_Y$ is a finite $\mathcal{O}_X$-algebra. Then the normality of $X$ shows $f_*\mathcal{O}_Y = \mathcal{O}_X$.
In particular, we have $f_*\mathcal{O}^*_Y = \mathcal{O}^*_X$. 
Then the assertion follows from Lemma \ref{lem:pb-supp}.
\end{proof}

\begin{prop}\label{pA.1}
There exists a fully faithful functor $\ulNCor \to \ulTCor$ whose essential image is $\ulTsatCor$.
In other words, there exists an equivalence of categories $\ulNCor \xrightarrow{\sim} \ulTsatCor$.
\end{prop}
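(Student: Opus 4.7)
My plan is to construct a functor $\iota : \ulNCor \to \ulTCor$ whose image will be $\ulTsatCor$ and which will be fully faithful.

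On objects, given $\sX = (\ol{X}, X^\infty)$, I will set $\iota(\sX) := (\ol{X}, T^+, T^-)$ where $T^+, T^-$ are effective Cartier divisors on $\ol{X}$ with $T^+ - T^- = X^\infty$ and $|T^+| = |X^\infty|$; this makes $\iota(\sX)$ saturated with interior $\sX^\o$. Existence of such a pair reduces to finding an effective Cartier divisor $H$ on $\ol{X}$ with $|H| = |X^\infty|$: given this, using \cite[Lem.~B.1]{KM} to dominate $-X^\infty$ by $NH$ on $|X^\infty|$, the pair $T^- := NH$, $T^+ := NH + X^\infty$ works for $N$ sufficiently large. Any two valid pairs yield canonically isomorphic triples in $\ulTCor$ via the identity correspondence on $\ol{X}$, since the modulus condition $T_1^+ + T_2^- \geq T_1^- + T_2^+$ reduces to the equality $T_1^+ - T_1^- = T_2^+ - T_2^- = X^\infty$; hence $\iota$ is well-defined up to canonical isomorphism. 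On morphisms, I send $V \in \ulNCor(\sX_1, \sX_2)$ to the same $V$ viewed in $\ulTCor(\iota(\sX_1), \iota(\sX_2))$: the $\ulTCor$ modulus condition $(T_1^+ + T_2^-)|_{\ol{V}^N} \geq (T_1^- + T_2^+)|_{\ol{V}^N}$ rearranges to $X_1^\infty|_{\ol{V}^N} \geq X_2^\infty|_{\ol{V}^N}$, which is exactly the $\ulNCor$ condition (the exceptional case of Remark~\ref{rmq-position} does not arise because $|T_2^-| \subseteq |T_2^+|$ for saturated $\iota(\sX_2)$ forces $|T_2^-| \cap T_2^\o = \emptyset$).

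Fully faithfulness then follows by inspection: the two hom-groups are free abelian groups on elementary correspondences between identical interiors, subject to the same left-properness requirement (closure in $\ol{X}_1 \times \ol{X}_2$ proper over $\ol{X}_1$) and the same modulus condition, hence they coincide.

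For essential surjectivity onto $\ulTsatCor$, given $T = (\ol{T}, T^+, T^-) \in \ulTsatCor$, I define $\sX_T := (\ol{T}^N, \nu^*(T^+ - T^-))$ where $\nu : \ol{T}^N \to \ol{T}$ is the normalization. This is a valid ne-modulus pair: $\ol{T}$ is reduced by the paper's reducedness argument (since $T^\o$ is smooth), so the pullbacks $\nu^* T^\pm$ are defined; the interior $\ol{T}^N \setminus |\nu^*(T^+ - T^-)|$ coincides with $\ol{T}^N \setminus |\nu^* T^+| = \nu^{-1}(T^\o)$ by saturation of $T$ (verified at each codimension-one point), and $\nu$ restricts to an isomorphism $\nu^{-1}(T^\o) \iso T^\o$ since $T^\o$ is smooth hence normal, so the interior is smooth. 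I then claim $\iota(\sX_T) \cong T$ in $\ulTCor$ via $\nu$: the graph $\Gamma_\nu$ and its transpose $\Gamma_\nu^t$ define mutually inverse modulus correspondences whose modulus conditions in both directions reduce to the trivial equality $\nu^* T^+ + \nu^* T^- = \nu^* T^- + \nu^* T^+$, paralleling the finite-surjective-isomorphism argument underlying Example~\ref{ex2.2}.

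The principal obstacle is the existence of an effective Cartier divisor $H$ with $|H| = |X^\infty|$ on a general normal separated finite-type $k$-scheme. Locally this is immediate (the product $fg$ coming from a local representation $X^\infty = \mathrm{div}(f/g)$ gives an effective Cartier divisor with the required support), but gluing these local divisors globally can fail for cohomological reasons on $\ol{X}$ that are not $\Q$-factorial. I anticipate resolving this either by passing to the quasi-projective case via Chow's lemma and producing $H$ through an ample-twist construction, or by replacing $\ol{X}$ as the total space of $\iota(\sX)$ with a carefully chosen possibly non-normal modification whose normalization recovers $\ol{X}$ and carries the required effective Cartier structure.
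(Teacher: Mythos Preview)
Your proposal has a genuine gap that you yourself flag: the existence of an effective Cartier divisor $H$ on $\ol{X}$ with $|H|=|X^\infty|$. This is not a technicality that Chow's lemma will fix --- even on a quasi-projective normal scheme there is no reason for the support of a given Cartier divisor to arise as the support of an effective one, since this is essentially asking each prime component of $|X^\infty|$ to be $\Q$-Cartier. Your second suggestion (pass to a modification of $\ol{X}$) is the right idea, and it is exactly what the paper does, but with a specific and canonical choice: blow up $\ol{X}$ along the \emph{ideal of denominators} of $X^\infty$ (Fulton, \emph{Intersection Theory}, p.~37). If $p:\ol{X}'\to\ol{X}$ is this blow-up with exceptional divisor $E$, then $p^*X^\infty + E$ is automatically effective, and the paper sets $i(\sX)=(\ol{X}',\,p^*X^\infty+2E,\,2E)$. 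This is saturated, has interior $\sX^\o$, and is manifestly functorial --- no ``well-defined up to canonical isomorphism'' bookkeeping is needed.

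There is a second, smaller gap in your essential-surjectivity sketch. You assert $|\nu^*(T^+-T^-)|=\nu^{-1}(|T^+-T^-|)$ ``verified at each codimension-one point,'' but for a \emph{non-effective} Cartier divisor the inclusion $|f^*D|\subseteq f^{-1}(|D|)$ can be strict due to cancellation, and checking at codimension-one points of $\ol{T}^N$ is not enough when $\ol{T}$ itself is not normal. The paper handles this by proving a separate lemma: for $f:Y\to X$ proper surjective between integral schemes with $X$ normal, one has $|f^*D|=f^{-1}(|D|)$ for any Cartier divisor $D$. This is applied with $X=\ol{X}$ (which is normal by the definition of an ne-modulus pair) and $f=p$ the blow-up, so the normality hypothesis is met. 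Your use of $\nu:\ol{T}^N\to\ol{T}$ goes the wrong way for this lemma, since $\ol{T}$ need not be normal; the paper instead stays on the triple side, blowing up $\ol{T}$ along the ideal of denominators of $T^+-T^-$ and exhibiting an explicit isomorphism in $\ulTCor$ between the result and $T$.
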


\begin{proof}
First we construct a functor $i:\ulNCor \to \ulTCor$ as follows. 
Let $\sX = (\ol{X},X^\infty)$ be an ne-modulus pair.
Let $I \subset \mathcal{O}_{\ol{X}}$ be the ideal of denominators of the Cartier divisor $X^\infty$ \cite[p. 37]{fulton}, and let $F \subset \ol{X}$ be the closed subscheme defined by $I$. 
Let $p:\ol{X}' \to \ol{X}$ be the blow-up of $\ol{X}$ along $F$, and let $E:=F \times_{\ol{X}} \ol{X}'$ be the exceptional divisor. 
Set
\[
i(\sX) := (\ol{X}',p^*X^\infty + 2E,2E).
\]

\begin{claim}
$i(\sX)^\o = \sX^\o$. Equivalently, $|p^*X^\infty+2E| = p^{-1}(|X^\infty|)$
\end{claim}

\begin{proof}[Proof of Claim]
The equivalence is by Corollary \ref{cor:pb-supp}. We prove the latter assertion. 
Since both sides coincide on $\ol{X}' \setminus E$ by Corollary \ref{cor:pb-supp}, it suffices to check 
\[
|p^*X^\infty+2E| \cap |E| = p^{-1}(|X^\infty|) \cap |E|.
\]
The right hand side equals $|X^\infty|$ since $|E| = p^{-1}(F)$ and $F \subset |X^\infty|$ (as sets). 
The left hand side also equals $|E|$ since $p^*X^\infty+E$ is effective and hence $p^*X^\infty+2E \supset E$ (as closed subschemes). 
%This finishes the proof.
\end{proof}

Let $\alpha \in \ulNCor (\sX,\sY)$ be an integral modulus correspondence. 
Notice that $\ulNCor (\sX,\sY) \subset \Cor (\sX^\o , \sY^\o) = \Cor (i(\sX)^\o , i(\sY)^\o)$ by the above claim. Then one easily checks that $\alpha$ belongs to $\ulTCor (i(\sX),i(\sY))$ and that $\ulNCor (\sX,\sY)= \ulTCor (i(\sX),i(\sY))$ in $\Cor (\sX^\o , \sY^\o)$. 
Moreover, by the poof of the above claim, we have 
\[
|i(\sX)^+ - i(\sX)^-| = |(p^*X^\infty + 2E) - 2E| = |p^*X^\infty| = |i(\sX)^+|,
\]
which shows $i(\sX) \in \ulTsatCor$. 

It remains to show that $i:\ulNCor \to \ulTsatCor$ is essentially surjective. 
Let $T$ be a saturated modulus triple, and set $\sX:=(\ol{T},T^+ - T^-)$. 
Let $I$ be the ideal of denominators of $T^+ - T^-$ and let $F \subset \ol{T}$ be the closed subscheme defined by $I$. Note that $F \subset |T^+ - T^-| = |T^+|$, where the equality holds since $T$ is saturated by assumption. 
Let $p:\ol{T}' \to \ol{T}$ be the blow-up along $F$ and let $E=p^{-1}(F)$ be the exceptional divisor. Then, by definition we have 
\[
i(\sX) = (\ol{T}', p^*(T^+-T^-) + 2E, 2E).
\]
Then $p$ induces a morphism of triples $i(\sX) \to T$, which is the identity on the interiors (since $i(\sX)^\o = \sX^\o = \ol{T} \setminus |T^+-T^-| = \ol{T} \setminus |T^+| = T^\o$).
This identity also defines an inverse morphism $T \to i(\sX)$. Indeed, the left properness is guaranteed by the properness of $p$, and the modulus condition is obviously satisfied noting $i(\sX)^+ - i(\sX)^- = p^*(T^+-T^-) + 2E - 2E = p^*(T^+-T^-)$. 
%This finishes the proof.
\end{proof}


\begin{thebibliography}{EGA4-IV}
%\bibitem{am} M. Artin, B. Mazur \'Etale homotopy, Lect. Notes in Math. {\bf 100}, Springer, 1969.
\bibitem{binda} F. Binda {\it A motivic homotopy theory without $A^1$-invariance}, Math. Z. {\bf 295} (2020), 1475--1519.
\bibitem{Binda-Saito} F. Binda, S. Saito, {\it Relative cycles with moduli and regulator maps}, J. Inst. Math. Jussieu {\bf 18} (2019), 1233--1293. 
%\bibitem{neronmod} S. Bosch, W. L\"utkebohmert, M. Raynaud {\it N\'eron models}, Erg. Math. u. Grenzgebiete (3)  {\bf 21}, Springer, 1990.
\bibitem{eis-har} D. Eisenbud, J. Harris The geometry of schemes, Grad. Texts in Math. {\bf 197}, Springer, 2000.
\bibitem{fulton} W. Fulton Intersection theory (2nd ed.), Erg. Math. Grenzgebiete {\bf 3 (2)}, Springer, 1998.
%\bibitem{RG} L. Gruson, M Raynaud {\it Crit\`eres de platitude et de projectivit\'e. Techniques de `platification' d'un module,} Invent. math.  {\bf 13} (1971), 1--89.
\bibitem{Hartshorne} R. Hartshorne {\it Algebraic Geometry}, Grad. Texts in Math. {\bf 52}, Springer, 1977.
%\bibitem{quillen-dedekind} B. Kahn {\it Around Quillen's theorem A}, \url{https://arxiv.org/abs/1108.2441}.
\bibitem{iy} F. Ivorra, T. Yamazaki {\it Nori motives of curves with modulus and Laumon $1$-motives},  
Canad. J. Math. {\bf 70} (2018), 868--897. 
\bibitem{iy2} F. Ivorra, T. Yamazaki {\it Mixed Hodge structures with modulus}, J. Inst. Math. Jussieu {\bf 21} (2022), 161--195. 
\bibitem{KM} B. Kahn, H. Miyazaki {\it Mayer-Vietoris triangles for motives with modulus}, \url{https://arxiv.org/abs/1809.05851}.
\bibitem{motmodI} B. Kahn, H. Miyazaki, S. Saito, T. Yamazaki {\it Motives with modulus, I, II}, Épijournal de géométrie algébrique {\bf 5} (2021), Articles Nr. 1, 2.
\bibitem{motmod} B. Kahn, H. Miyazaki, S. Saito, T. Yamazaki {\it Motives with modulus, III: the category of motives},  Ann. of $K$-theory {\bf 7-1} (2022), 119--178.
\bibitem{rec} B. Kahn, S. Saito, T. Yamazaki {\it Reciprocity sheaves} (with two appendices by Kay R\"ulling), Compositio Math. {\bf 152} (2016), 1851--1898.
\bibitem{motmod2} B. Kahn, S. Saito, T. Yamazaki {\it Reciprocity sheaves, II}, Homology, Homotopy and Applications {\bf 24} (2022), 71--91.
%\bibitem{motmod2} B. Kahn, S. Saito, T. Yamazaki {\it Reciprocity sheaves and motives with modulus}, preprint,  2017, \url{https://arxiv.org/abs/1707.07398}.
\bibitem{2017.1} B. Kahn, R. Sujatha {\it Birational motives, II: triangulated birational motives}, IMRN {\bf 2017} (22), 6778--6831.
\bibitem{Krishna-Park} A. Krishna, J. Park {\it Moving lemma for additive higher Chow groups}, Alg. Number Theory, {\bf 6} (2012), 293--326.
\bibitem{kai} W. Kai {\it A moving lemma for algebraic cycles with modulus and contravariance}, IMRN {\bf 2021}, No. 1,  475--522.
%\bibitem{lazard} D. Lazard {\it Autour de la platitude}, Bull. SMF {\bf 97} (1969), 81--128.
%\bibitem{mcl} S. Mac Lane Categories for the working mathematician, Grad. Texts in Math. {\bf 5}, Springer (2nd ed.), 1998.
%\bibitem{mvw} C. Mazza, V. Voevodsky, C. Weibel {\it Lecture notes on motivic cohomology}, Clay Math. Monographs {\bf 2}. AMS -- Clay Math. Inst., 2006.
\bibitem{cubeinv} H. Miyazaki {\it Cube invariance of higher Chow groups with modulus}, J. Alg. Geometry {\bf 28} (2019),  339--390..
%\bibitem{oda} S. Oda {\it On finitely generated birational flat extensions of integral domains}, Ann. Math. Blaise Pascal {\bf 11} (2004), 35--40.
\bibitem{sasa} S. Saito, K. Sato Algebraic cycles and \'etale cohomology (in Japanese), Springer series, Maruzen publishing, 2012.
\bibitem{voetri} V. Voevodsky {\it Triangulated categories of motives over a field}, {\it in} Cycles, transfers and motivic cohomology theories, Annals of  Math. Studies {\bf 143}, Princeton University Press, 2000, 188--238.
\begin{center}
\sc Acronyms
\end{center}
%\bibitem[EGA1]{EGA1} A. Grothendieck, J.A. Dieudonn\'e \'El\'ements de g\'eom\'etrie alg\'ebrique, I, Grundl. math. Wiss. {\bf 166}, Springer, 1971. 
%\bibitem[EGA2]{EGA2} A. Grothendieck {\it \'El\'ements de g\'eom\'etrie alg\'ebrique : II. \'Etude globale \'el\'ementaire de quelques classes de morphismes} (EGA2), Publ. math. I.H.\'E.S. {\bf 8} (1961), 5--222.
\bibitem[EGA4-IV]{EGA4} A. Grothendieck \'El\'ements de g\'eom\'etrie alg\'ebrique : IV. \'Etude locale des sch\'emas et des morphismes de sch\'emas (EGA4), quatri\`eme partie, Publ. math. I.H.\'E.S. {\bf 32} (1967), 5--361.
\bibitem[SGA3]{SGA3} M. Demazure, A. Grothendieck Sch\'emas en groupes, Tome I: propri\'et\'es g\'en\'erales des sch\'emas en groupes, new edition, Documents math\'ematiques {\bf 7}, SMF, 2012.
%\bibitem[SGA4-I]{SGA4} E. Artin, A. Grothendieck, J.-L. Verdier Th\'eorie des topos et cohomologie \'etale des sch\'emas (SGA4), Vol. 1, Lect. Notes in Math. {\bf 269}, Springer, 1972.
\end{thebibliography}
\end{document}